\documentclass[12pt]{amsart}
\usepackage{color, graphicx, amsfonts, amsmath, amssymb, amsthm, amsrefs, amscd} 
\usepackage[colorlinks = false]{hyperref}
\usepackage{tikz-cd}
\usepackage{stmaryrd}

\usepackage{enumitem}

\calclayout

\newcommand{\C}{\mathbb{C}}
\newcommand{\N}{\mathbb{N}}
\newcommand{\Q}{\mathbb{Q}}
\newcommand{\R}{\mathbb{R}}

\newcommand{\holo}{\mathcal{O}}

\DeclareMathOperator{\aut}{Aut}
\DeclareMathOperator{\lie}{Lie}
\DeclareMathOperator{\vect}{VF}
\DeclareMathOperator{\sym}{Sym}

\theoremstyle{plain}
\newtheorem{theorem}{Theorem}[section]

\newtheorem{lemma}[theorem]{Lemma}
\newtheorem{proposition}[theorem]{Proposition}

\newtheorem{theorem:introduction}{Theorem}[]

\theoremstyle{definition}
\newtheorem{definition}[theorem]{Definition}
\newtheorem{remark}[theorem]{Remark}

\newcommand{\saut}{\mathrm{Aut}^\sigma}
\newcommand{\taut}{\mathrm{Aut}^\tau}

\DeclareMathOperator{\diff}{Diff}
\DeclareMathOperator{\id}{id}
\newcommand{\cotbun}{T^\ast M}
\newcommand{\rcotbun}{T^\ast_\mathbb{R} M_\mathbb{R}}
\newcommand{\ttau}{\tilde{\tau}}
\DeclareMathOperator{\Sym}{Sym}

\title[ Holomorphic approximation for Hamiltonian diffeomorphisms]{Holomorphic approximation for Hamiltonian diffeomorphism groups via the Symplectic Density Property}
\author{Gaofeng Huang}
\address{Department of Mathematics \\ Harvard University \\  
Cambridge MA, USA}
\email{ghuang@math.harvard.edu}

\author{Frank Kutzschebauch}
\address{Mathematical Institute \\ University of Bern \\  
Bern, Switzerland}
\email{frank.kutzschebauch@unibe.ch}

\begin{document}

\subjclass{MSC2020: 32M17 (Primary),  32E30, 32M05, 32M25 (Secondary).}
\keywords{Anders\'en-Lempert theory, symplectic automorphism group, diffeomorphism group, Carleman approximation, complexification, symplectic density property, flexibility.}

\thanks{The first author was funded by Schweizerischer Nationalfonds Postdoc.Mobility [P500-$2\_239113$]. The second author was partially supported by Schweizerischer Nationalfonds [10005432]. }

\begin{abstract}
    We show that the cotangent bundle of a holomorphically flexible Stein manifold has the symplectic density property. As application we deduce holomorphic approximation for Hamiltonian diffeomorphisms of real cotangent bundles by complex symplectic automorphisms of the complexified cotangent bundles, which preserve the real bundle, in the Carleman sense, i.e.\@ in the Whitney topology. 
\end{abstract}

\maketitle

\setcounter{tocdepth}{1}
\tableofcontents

\section{Introduction}

Complexification of real objects and approximation of real functions by holomorphic functions is one of the most important tools in mathematics. For example a famous result of the Swedish mathematician Torsten Carleman states that a continuous function on the real line $\R$ can be approximated in the Whitney topology by holomorphic functions on the complex line $\C$. For an overview of such results, especially  in the setting of Several Complex Variables we refer to the overview article of John Erik Forn{\ae}ss, Franc Forstneri{\v c}  and Erlend Wold \cite{MR4264040}. In the Several Complex Variables community such approximation is commonly called Carleman approximation.

\smallskip
The step from approximating functions to approximating diffeomorphims is delicate.  For example approximation of  diffeomorphisms of the real line $\R$ by holomorphic automorphisms of $\C$ is clearly impossible. Diffeomorphism groups are always huge whereas the holomorphic automorphism group of $\C$ consists of affine maps only. So a necessary input for holomorphic approximation of diffeomorphism groups are complex manifolds with huge holomorphic automorphism groups. A theory about such complex manifolds, the so-called Anders\'en-Lempert theory, has been developed since about 35 years, see the overview article of Forstneri\v c and the second author \cite{MR4440754}. In this paper we concentrate on a symplectic version of that theory, which as remarked in \cite{MR4440754} (before this article) was still lacking criteria and examples. 
\medskip

Let $X$ be a smooth complex affine algebraic variety equipped with an algebraic symplectic form $\omega$. Denote by $O(X)$ the algebra of regular functions on $X$. With the Poisson bracket induced by the symplectic form, $O(X)$ is an infinite dimensional Lie algebra. Furthermore, denote by $IHF_X$ the subspace of $O(X)$ consisting of regular functions associated to complete (globally integrable) Hamiltonian vector fields: 
\begin{align*}
    IHF_X = \{\, f \in O(X) : \text{ the Hamiltonian vector field } V_f \text{ is complete} \,\}.
\end{align*}
The Lie subalgebra generated by $IHF_X$ is denoted as $\lie_X$:
\begin{align*}
    \lie_X := \lie(IHF_X).  
\end{align*}

\begin{definition}
    We say that a smooth symplectic affine variety $(X, \omega)$ has the \textbf{algebraic symplectic density property} if the Lie algebra generated by complete algebraic symplectic vector fields coincides with the Lie algebra of algebraic symplectic vector fields; similarly, $(X, \omega)$ has the \textbf{algebraic Hamiltonian density property} if the Lie algebra generated by complete algebraic Hamiltonian vector fields coincides with the Lie algebra of algebraic Hamiltonian vector fields; or equivalently,
    \[
        \lie_X = \C[X] \mod \C.
    \]
    We say that a smooth affine variety $M$ is \textbf{flexible}, if locally nilpotent algebraic vector fields (LNDs) on $M$ span the tangent space $T_x M$ at every point $x \in M$.
\end{definition}

A main achievement of the present paper is to provide the first big class of Stein manifolds with the symplectic density property, a fundamental class in symplectic geometry, the class of  cotangent bundles. We start with an algebraic version of our results, since many algebraic methods are in the core of our arguments.

\begin{theorem:introduction} \label{thm: algHDP}
    The algebraic cotangent bundle of every smooth flexible  complex affine algebraic variety has the algebraic symplectic density property and the algebraic Hamiltonian density property. 
\end{theorem:introduction}

There is an abundance of affine flexible manifolds, the study of which has become one of the most active topics in Affine Algebraic Geometry. A well-studied class of algebraic manifolds is among them: the affine homogeneous spaces.

\bigskip 

Here comes  the (more general) holomorphic counterpart of the above algebraic definitions and results.

Let $Y$ be a Stein manifold equipped with a holomorphic symplectic form $\Omega$ and $\holo(Y)$ the algebra of holomorphic functions on $Y$. Denote by $IHF_Y$ the subset of $\holo(Y)$ consisting of holomorphic functions associated to complete Hamiltonian vector fields: 
\begin{align*}
    IHF_Y = \{\, f \in \holo(Y) : \text{ the Hamiltonian vector field } V_f \text{ is complete} \,\}.
\end{align*}
The Lie algebra generated by $IHF_Y$ is denoted as $\lie_Y$:
\begin{align*}
    \lie_Y := \lie(IHF_Y).  
\end{align*}

\begin{definition}
    We say that a Stein symplectic manifold $(Y, \Omega)$ has the \textbf{symplectic density property} if the Lie algebra generated by complete holomorphic symplectic vector fields is dense in the Lie algebra of holomorphic symplectic vector fields, in the compact-open topology; similarly, $(Y, \Omega)$ has the \textbf{Hamiltonian density property} if the Lie algebra generated by complete holomorphic Hamiltonian vector fields is dense in the Lie algebra of holomorphic Hamiltonian vector fields; or equivalently, 
    \[
        \overline{\lie_Y} = \holo(Y) \mod \C.
    \]
    We say that a Stein manifold $M$ is \textbf{holomorphically flexible}, if the complete holomorphic vector fields on $M$ span the tangent space $T_x M$ at every point $x \in M$.
\end{definition}

\begin{theorem:introduction} \label{thm: holoHDP}
    The holomorphic cotangent bundle of every holomorphically flexible Stein manifold has the symplectic density property and the Hamiltonian density property. 
    
    In particular, the holomorphic cotangent bundle of every Stein manifold with the density property has the symplectic density property.
\end{theorem:introduction}

Having these symplectic manifolds with the symplectic density property, i.e.\@ with huge groups of holomorphic symplectomorphisms, to our disposal we can use them to  approximate groups of Hamiltonian diffeomorphisms of real symplectic manifolds. First let us point out that approximation is meant in the following strongest possible sense. 

\begin{definition}
    Let $X_\R$ be a smooth manifold and $G_\R$ a subgroup of $\diff(X_\R)$. We say that $G_\R$ is \textbf{holomorphically approximable}, if there exist a complexification $(X, \sigma)$ of $X_\R$ and a subgroup $G$ of $\saut (X)$, such that $G \rvert_{X_\R} \subset G_\R$ and $G \rvert_{X_\R}$ is dense in $G_\R$ in the Whitney topology.  
\end{definition}

Our main holomorphic approximation  result is the following, see Theorem \ref{Ham-Carleman}. For the notion of $\tau$-holomorphic flexibility, see Definition \ref{tau-hol-flex}.
 
\begin{theorem:introduction} \label{thm-Carleman}
    If a smooth symplectic manifold $M_\R$ has a complexification $(M, \tau)$ which is Stein and $\tau$-holomorphically flexible, then the group of Hamiltonian diffeomorphisms of the cotangent bundle  $T^* M_\R$ is holomorphically approximable. 
\end{theorem:introduction}

As examples we prove that homogeneous spaces $G_\R/H_\R$ and the Danielewski surfaces admit flexible complexifications, see Section \ref{sec: realFlex}.

\smallskip
In classical mechanics, every physical system has a configuration space and the cotangent bundle over the configuration space is the phase space of the system. When the base manifold is a homogeneous space, the forward time evolution, i.e.\@ the Hamiltonian flow of the energy function, are globally approximable by holomorphic Hamiltonian flows to any finite order. This illustrates that symmetries of these systems are preserved by complexification.

\medskip 

In another recent paper \cite{DHKW26} we have more systematically developed the application of density property, volume density property and symplectic density property to the approximation of diffeomorphism groups, groups of volume preserving diffeomorphisms and symplectic diffeomorphisms.  There already existed some isolated results in this direction:  approximation of Hamiltonian diffeomorphisms of $(S^1)^{2n}$, $(S^1)^n \times \R^n$ and $\R^{2n}$ (however in the much coarser compact-open topology and in the case $(S^1)^n \times \R^n \cong T^*(S^1)^n$ for another symplectic form than the canonical form on $T^*(S^1)^n$) by Pierre Berger and Dmitry Turaev \cite{MR4930686}, approximation of Hamiltonian diffeomorphisms of coadjoint orbits by Fusheng Deng and Erlend Wold \cite{MR4423269} as well as symplectic diffeomorphisms of the real Calogero-Moser space by the first author \cite{CaloCarleman}.  

\medskip

The paper is structured as follows. We first consider some properties of the Hamiltonian vector fields and the structure of the Hamiltonian functions on the cotangent bundle in Section \ref{sec: HVF}. Then in Section \ref{sec: KK-Method} we give the main arguments for the symplectic density property. Next in Section \ref{sec: Ham-Carleman} we show that a variant of symplectic density property implies the holomorphic approximability of the Hamiltonian diffeomorphism group of the cotangent bundle. Finally as application, we prove in Section \ref{sec: realFlex} this symplectic density property for affine homogeneous spaces of linear algebraic groups and smooth Danielewski surfaces. 

\section{Hamiltonian Vector Fields on the Cotangent Bundle} \label{sec: HVF}

\subsection{Cotangent Liftings}
Let $M$ be a Stein manifold (or a smooth algebraic variety) of dimension $n$; let $\pi \colon T^*M \to M$ be the holomorphic (resp.\ algebraic) cotangent bundle over $M$. For $(q,p) \in \cotbun$, $v \in T_{(q,p)}(\cotbun)$, the tautological 1-form $\alpha$ is defined by $\alpha(v) = p ( d \pi_{(q,p)} (v) )$.
Also, let $\omega = -d \alpha$ be the canonical symplectic form on $T^*M$. In local coordinates $(q^1, \dots, q^n)$, let $p_1, \dots, p_n$ be fiberwise linear coordinates on $T^*M$. Then 
\begin{align*} 
    \alpha = \sum_i p_i d q^i, \quad \omega = -d \alpha = \sum_i d q^i \wedge d p_i.
\end{align*}

Given a function $f$ on $M$ we obtain a function $\pi^* f$ on $T^*M$: 
\begin{align*}
    \holo(M) \hookrightarrow \holo(T^*M), \quad  f \mapsto \pi^* f.
\end{align*}

\begin{definition}
    Let $\beta$ be an algebraic (or holomorphic) 1-form on $M$. The pullback $\pi^*\beta$ determines a vector field $V_\beta$ on $T^*M$ via 
    \begin{align} \label{lift-1-form-sympl}
        i_{V_\beta} \omega =  \pi^*\beta .
    \end{align}
    We call $V_\beta$ the {\bf vertical lift} of the 1-form $\beta$.
\end{definition}

\begin{lemma} \label{lem: fctsonM}
    For a 1-form $\beta \in \Omega^1(M)$, its vertical lift $V_\beta$ given by $i_{V_\beta} \omega = \pi^* \beta$ is a complete vector field on $T^*M$. Moreover, if $\beta$ is closed (or exact), $V_\beta$ is symplectic (Hamiltonian). In particular, any regular function on $M$ induces a complete Hamiltonian vector field on the cotangent bundle $T^*M$, i.e.,
    \[
        \pi^* \holo(M) \subseteq IHF_{\cotbun}.
    \]
\end{lemma}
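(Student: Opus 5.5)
The plan is to compute the vertical lift in local coordinates and integrate it explicitly. Write $\beta = \sum_i b_i(q)\, dq^i$ in local coordinates $(q^1,\dots,q^n)$ on $M$ with fibre coordinates $p_1,\dots,p_n$. Then $\pi^*\beta = \sum_i b_i(q)\, dq^i$ on $T^*M$. We look for $V_\beta = \sum_i A^i \partial_{q^i} + B_i \partial_{p_i}$ with $i_{V_\beta}\omega = \pi^*\beta$, where $\omega = \sum_i dq^i\wedge dp_i$. Contracting gives $i_{V_\beta}\omega = \sum_i A^i dp_i - B_i dq^i$. Hence $A^i = 0$ and $B_i = -b_i(q)$, that is,
\[
  V_\beta = -\sum_i b_i(q)\,\partial_{p_i}.
\]
The lift is therefore purely vertical, and on each fibre $T^*_qM$ it is the constant vector field $-\beta(q)$. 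Because the coefficients depend only on $q$, the flow is
\[
  \phi_t(q,p) = (q,\, p - t\beta(q)),
\]
which is defined for all $t\in\C$. Intrinsically, $\phi_t$ is the fibrewise translation by $-t\beta$. This is a global holomorphic (resp.\ algebraic) map $T^*M\to T^*M$, so no local-to-global issue arises; this gives completeness.

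For the second claim, use Cartan's formula. We have $L_{V_\beta}\omega = d\, i_{V_\beta}\omega = d\pi^*\beta = \pi^* d\beta$, which vanishes exactly when $\beta$ is closed; then $V_\beta$ is symplectic. If $\beta = df$ with $f\in\holo(M)$, then $i_{V_\beta}\omega = \pi^* df = d(\pi^*f)$, so $V_\beta$ is the Hamiltonian vector field of $\pi^*f$, up to the sign convention for $V_f$; with the opposite convention one takes $-f$, which is harmless. Thus $V_{\pi^*f}$ is complete, so $\pi^*f\in IHF_{T^*M}$, and this yields $\pi^*\holo(M)\subseteq IHF_{T^*M}$.

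None of these steps is a real obstacle. The only points that need care are the sign convention relating $i_{V_f}\omega$ and $df$, and checking that the flow formula is coordinate-independent. The latter holds because fibrewise translation by a section of $T^*M$ is intrinsically defined.
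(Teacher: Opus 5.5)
Your proposal is correct and follows essentially the same route as the paper. In local coordinates you get $V_\beta=-\sum_i\beta_i\,\partial_{p_i}$, so the lift is vertical and constant on each fiber; you make completeness explicit through the global flow $(q,p)\mapsto(q,p-t\beta(q))$, and the closed and exact cases then follow from $i_{V_\beta}\omega=\pi^*\beta$, with $\beta=df$ giving the Hamiltonian field of $\pi^*f$ (your Cartan-formula step and sign remark only spell out what the paper leaves implicit).
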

\begin{proof}
    In local coordinates, we have $\beta = \sum_i \beta_i d q^i$ and 
    \begin{align} \label{Lift-1form}
        V_\beta = 0 \, \partial_{q^1} + \cdots + 0 \, \partial_{q^n} - \beta_1 \partial_{p_1} - \cdots - \beta_n \partial_{p_n}
    \end{align}
    which is vertical, i.e.\@ $\pi_* V_\beta = 0$. On every fiber $\pi^{-1}(q)$, the restriction $V_\beta \rvert_{\pi^{-1}(q)}$ is constant and hence complete.

    When $\beta$ is closed, it follows from $i_{V_\beta} \omega = \pi^* \beta$ that $V_\beta$ is symplectic.

    When $\beta$ is exact, $\beta = df$ for some regular function $f$ on $M$. 
    Since $i_{V_\beta} \omega = \pi^* df = d (\pi^* f)$, the claim follows. Note that $V_{df}$ is the Hamiltonian vector field associated to $\pi^* f$ on $T^*M$. 
\end{proof}

\begin{definition}
    Let $V$ be a  vector field on the base $M$. 
    We denote by 
    \begin{align} \label{V-h_V}
        h_V (q, p) :  = p(V)(q) = \sum_i p_i V^i (q) \, \text{ where } (q, p) \in T^*M,
    \end{align}
    the fiber-wise linear function on $T^*M$ induced by $V$. The {\bf Hamiltonian lift} of $V$ is the Hamiltonian vector field on $T^*M$ associated to $h_V$.
\end{definition}
This map sending vector fields to functions on $T^*M$ is $\holo(M)$-linear:
\begin{align} \label{hfV = fhV}
    h_{f V} = (\pi^*f) \, h_V \, \text{ for any } f \in \holo(M).
\end{align}

\begin{lemma} \label{lem: Vliftcomplete}
    Let $V$ be a vector field on the base manifold $M$. If $V$ is complete, then its Hamiltonian lift $\hat{V}$ to the cotangent bundle $T^*M$ is also complete. 
\end{lemma}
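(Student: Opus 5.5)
The plan is to write down the flow of $\hat V$ explicitly as the cotangent lift of the flow of $V$; completeness of $\hat V$ then follows at once from completeness of $V$. Let $\varphi_t \colon M \to M$, $t \in \C$, be the holomorphic flow of $V$. Since $V$ is complete, each $\varphi_t$ is a biholomorphism with $\varphi_t^{-1} = \varphi_{-t}$. Its cotangent lift is
\begin{align*}
    \Phi_t \colon T^*M \to T^*M, \quad \Phi_t(q,p) = \bigl(\varphi_t(q), \, p \circ (d\varphi_t)_q^{-1}\bigr) = \bigl(\varphi_t(q), \, (\varphi_{-t})^* p\bigr).
\end{align*}
It is defined for all $t \in \C$, holomorphic in $(t,q,p)$, and satisfies $\Phi_0 = \id$ and $\Phi_{s+t} = \Phi_s \circ \Phi_t$ by the chain rule and the group law for $\varphi_t$. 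Hence $\Phi_t$ is a global holomorphic $\C$-action on $T^*M$, and it remains to check that its generator $W := \frac{d}{dt}\big|_{t=0} \Phi_t$ equals $\hat V$.

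First, $\Phi_t$ preserves the tautological form, $\Phi_t^* \alpha = \alpha$. This is the standard naturality of $\alpha$: one has $\pi \circ \Phi_t = \varphi_t \circ \pi$, so for $v \in T_{(q,p)}T^*M$,
\begin{align*}
    (\Phi_t^*\alpha)(v) = \bigl((\varphi_{-t})^*p\bigr)\bigl(d\varphi_t \, d\pi(v)\bigr) = p(d\pi(v)) = \alpha(v).
\end{align*}
Differentiating at $t=0$ gives $L_W \alpha = 0$. Cartan's formula then yields $d(i_W\alpha) = -i_W d\alpha = i_W \omega$. Moreover $d\pi(W) = V \circ \pi$, because $\pi \circ \Phi_t = \varphi_t \circ \pi$. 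Therefore
\begin{align*}
    i_W \alpha (q,p) = p(d\pi(W)) = p(V(q)) = h_V(q,p),
\end{align*}
so $i_W \omega = d h_V$ and $W$ is the Hamiltonian vector field of $h_V$, that is, $W = \hat V$. By uniqueness of integral curves, the flow of $\hat V$ is $\Phi_t$, which exists for all complex times, so $\hat V$ is complete.

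The only real obstacle is bookkeeping of sign conventions. The relation $i_W\omega = dh_V$, rather than $-dh_V$, must match the paper's convention $i_{V_f}\omega = df$, which is implicit in the proof of Lemma \ref{lem: fctsonM} where $i_{V_{df}}\omega = d(\pi^*f)$ is called the Hamiltonian field of $\pi^*f$. With $\omega = -d\alpha$ the signs above come out consistently. If a sign did flip, $\hat V$ would be the generator of $\Phi_{-t}$, which is equally complete, so the conclusion does not depend on it. As a cross-check one can compute in local coordinates: with $h_V = \sum_i p_i V^i(q)$, the $q$-component of $\hat V$ is $\sum_i V^i \partial_{q^i}$ and the $p$-component is linear in $p$. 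Completeness is then also visible directly: the base curve is an integral curve of $V$, defined for all $t$, and along it the fiber coordinates satisfy a linear ODE, which has no blow-up.
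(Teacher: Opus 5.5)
Your proof is correct and follows essentially the same route as the paper. The paper likewise observes that the fiber components of $\hat V$ are linear in $p$ over a complete base integral curve, and that the flow of $\hat V$ is the cotangent lift $(q,p) \mapsto (\varphi_t(q), (\varphi_{-t})^* p)$; the paper only asserts that this lift is the flow, whereas you verify it via $\Phi_t^*\alpha = \alpha$ and Cartan's formula, including the check against the paper's sign convention $i_{V_f}\omega = df$.
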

\begin{proof}
    The Hamiltonian lift $\hat{V}$ of $V$ is given locally by
    \begin{align} \label{Ham-lift}
        (V^1, \dots, V^n, - \sum_a p_a \partial_{x^1} V^a, \dots, - \sum_a p_a \partial_{x^n} V^a ).
    \end{align}
    In the vertical directions, the lift is linear in $p_a$'s. Thus, the completeness of the horizontal component implies the completeness of the lift. 
    
    Note that when $V$ is complete, its flow map $\varphi_t \in \aut(M), t \in \C$ induces a transformation on $T^*M$ as $(q, p) \mapsto ( \varphi_t(q), (\varphi_{-t})_q^* p(q) )$, which is the flow map of the lift. 
\end{proof}

\begin{lemma} \label{mix-field-function}
    Let $V$ be a vector field on the base manifold $M$. If $V$ is complete, then 
    \[
         \pi^* \holo(M) [h_V] = \{\, (\pi^* f) h_V^k: f \in O(M), k \in \N \,\}  \subseteq \lie_{T^*M}.
    \]
\end{lemma}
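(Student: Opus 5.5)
The plan is to generate $(\pi^*f)h_V^k$ by Poisson brackets from two families of functions already known to lie in $IHF_{T^*M}$. The first is $\pi^*\holo(M)$, by Lemma \ref{lem: fctsonM}. The second is the powers $h_V^m$: by Lemma \ref{lem: Vliftcomplete} the Hamiltonian lift $\hat V$ of $h_V$ is complete. Since $h_V$ is a first integral of its own flow, the field of $h_V^m$ is $m h_V^{m-1}\hat V$, i.e.\ $\hat V$ multiplied by a function constant along its own integral curves. Such a field is again complete, so $h_V^m\in IHF_{T^*M}$ for all $m\ge 0$.

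The basic computation uses the coordinate formula \eqref{Ham-lift}, which gives $\{h_V,\pi^*f\}=\pm\pi^*(Vf)$. The Leibniz rule then gives
\[
\{\pi^*g,\ h_V^{m}\pi^*u\}=\mp m\,h_V^{m-1}\pi^*(u\,Vg),
\]
using that $\{\pi^*g,\pi^*u\}=0$. I would start from $u=1$ and iterate brackets with $\pi^*g_1,\pi^*g_2,\dots$. This shows that $h_V^{k}\pi^*\bigl(Vg_1\cdots Vg_r\bigr)\in\lie_{T^*M}$ for all $g_i\in\holo(M)$ and all $k\ge 0$, obtained by starting from $h_V^{k+r}$.

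I would add a second source of generators, used to supply coefficients not of derivative form. If $Vu=0$, then both $h_V$ and $\pi^*u$ Poisson-commute with $H=h_V^k\pi^*u$. Hence the field $X_H=k\,\pi^*u\,h_V^{k-1}\hat V+h_V^k X_{\pi^*u}$ is a combination of two commuting complete fields, $\hat V$ and the vertical field $X_{\pi^*u}$. Its coefficients are first integrals, so $X_H$ is complete. Thus $h_V^k\pi^*u\in IHF_{T^*M}$ whenever $u\in\ker V$. Bracketing these with $\pi^*g$ gives $h_V^{k}\pi^*(u\,Vg)$ for every $u\in\ker V$ and every $g$.

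The main obstacle is the last step: showing that every $f\in\holo(M)$ is reached, i.e.\ that $f$ is a finite sum of products of the form $u\,Vg_1\cdots Vg_r$. This is immediate on the open set where $V$ admits a function $g$ with $Vg=1$, but fails naively at zeros of $V$. There the leading term of $(\pi^*f)h_V^k$ has to come from a genuine bracket $\{h_V^a\pi^*u,h_V^b\pi^*w\}=\pm h_V^{a+b-1}\pi^*(a\,u\,Vw-b\,w\,Vu)$ with $a,b\ge1$. I would first try to exploit the freedom in choosing $a,b$ in order to solve for $f$ from these expressions. Failing that, I would attempt to prove the statement only modulo the closure, using Stein-theoretic solvability of $Vg=f$ locally.
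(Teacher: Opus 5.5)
Your partial results are correct: $h_V^m$ and $h_V^k\pi^*u$ with $u\in\ker V$ lie in $IHF_{T^*M}$, and your bracket formulas hold. But there is a genuine gap, exactly at the step you flag, and it cannot be closed with these generators, not even up to closure. Let $I\subset\holo(M)$ be the ideal generated by $V(\holo(M))$ and put $A=\ker V+I$. Your own formulas show that the subspace $\mathcal{S}=\pi^*\holo(M)+\sum_{k\ge1}h_V^k\,\pi^*A$ is closed under the Poisson bracket: brackets of $\pi^*g$ with $h_V^k\pi^*a$, and of $h_V^a\pi^*u$ with $h_V^b\pi^*w$, only produce new coefficients in $I$. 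Since $\mathcal{S}$ contains all your families, everything you can generate lies in $\mathcal{S}$.

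Now take $M=\C^2$ and the complete field $V=z_2^2\partial_{z_1}$, whose flow is $(z_1+tz_2^2,z_2)$. Here $\ker V$ consists of the functions of $z_2$ alone, $I=z_2^2\holo(\C^2)$, and $z_1\notin A$. The fiberwise-linear part of an element of $\mathcal{S}$ is $h_V\pi^*a_1$ with $a_1\in A$, and $A$ is closed. Hence $(\pi^*z_1)h_V=z_1z_2^2p_1$ is not even in $\overline{\mathcal{S}}$. Tuning $a,b$ cannot help, and neither can local solvability of $Vg=f$: here $Vg=f$ forces $z_2^2\mid f$. Moreover, the lemma asserts exact membership, which the algebraic version and Lemma \ref{Stalk-at-pt} rely on.

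The missing idea is to use complete Hamiltonians that are sums rather than products. For every $f\in\holo(M)$, the Hamiltonian field of $\pi^*f+h_V$ is $\hat V$ plus the vertical lift of $df$. It projects to the complete field $V$ on $M$ and is affine-linear in the fiber coordinates. So over each complete base trajectory the fiber equation is an inhomogeneous linear ODE, which has global solutions. Hence $\lambda\pi^*f+h_V\in IHF_{T^*M}$ for every $\lambda\in\C$, and so $(\lambda\pi^*f+h_V)^{k+1}\in IHF_{T^*M}$ by Lemma \ref{lem: C[h_V]}. Expand $(\lambda\pi^*f+h_V)^{k+1}=\sum_{j=0}^{k+1}\binom{k+1}{j}\lambda^j(\pi^*f)^jh_V^{k+1-j}$ and take $k+2$ distinct values of $\lambda$. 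Inverting the Vandermonde matrix isolates the $j=1$ term $(k+1)(\pi^*f)h_V^k$ as a linear combination of elements of $IHF_{T^*M}$. This is the paper's proof, and it needs no brackets at all. Your family $h_V^k\pi^*u$ with $u\in\ker V$ is the commuting special case. The point is that $h_V$ and $\pi^*f$ need not Poisson-commute: the fiberwise-affine structure alone gives completeness.
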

\begin{proof}
    Let $f \in \holo(M)$. From \eqref{Lift-1form} and \eqref{Ham-lift}, the Hamiltonian vector field on $T^*M$ associated to $\pi^* f + h_V$ is locally given by
    \begin{align*}
        V^1 \partial_{q^1} + \cdots + V^n  \partial_{q^n} - \left( \partial_{q^1}f + \sum_a p_a \partial_{q^1} V^a \right) \partial_{p_1} - \cdots - \left( \partial_{q^n}f + \sum_a p_a \partial_{q^n} V^a \right) \partial_{p_n}
    \end{align*}
    In the base direction the flow is complete by assumption, whereas in each linear fiber the vector field is linear in the fiber coordinates $p_a$ and thus integrable for all time. Thus $\pi^* f + h_V  \in IHF_{T^*M}$ which implies $(\lambda \pi^*f + h_V)^{k+1} \in IHF_{T^*M}$ for any $\lambda \in \C$ and $k \in \N$. Using pairwise distinct scalars $\lambda_1, \lambda_2, \dots, \lambda_{k+2}$ and the invertibility of the Vandermonde coefficient matrix, we get $(\pi^* f) h_V^k$ as a linear combination of $(\lambda_i \pi^*f + h_V)^{k+1}$ and thus an element of $\lie_{T^* M}$. 
\end{proof}

\begin{remark}
    We use the sign convention $\{ F, G \} = \omega( V_G, V_F )$ where $F,G \in \holo(T^*M)$. 
    The following Poisson brackets associated to the canonical symplectic form $\omega$ on $T^*M$ are practical (Kentaro Yano and Edward Patterson \cite{MR0206868}, Propositions 4, 7 and 9):
    \begin{align*}
        \{ \pi^*f, \pi^*g \} = 0, \quad
        \{ h_V, \pi^*f \} = V(f), \quad
        \{ h_V, h_W \} = h_{[V,W]}.
    \end{align*}
    where $f,g \in \holo(M)$, $V, W$ holomorphic vector fields on $M$ and $h_V$ the Hamiltonian function on $T^*M$ associated to the Hamiltonian lift of $V$. 
\end{remark}

\begin{lemma} \label{lem: C[h_V]}
    If $f \in IHF_X$, then $\C[f] \subset IHF_X$. 
\end{lemma}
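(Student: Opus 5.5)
The plan is to compute the Hamiltonian vector field of a polynomial in $f$ directly and show that it is a rescaling of $V_f$ by a function that is constant along the flow of $V_f$. Let $P \in \C[t]$ and set $g = P(f)$. The assignment $F \mapsto V_F$ is defined through $i_{V_F}\omega = dF$, which is linear in $dF$. Since $dg = P'(f)\, df$, we get
\[
V_{P(f)} = P'(f)\, V_f .
\]
This is just the chain rule combined with $\holo$-linearity of the defining relation, and it requires no local coordinates.

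Next, observe that $f$ is a first integral of its own Hamiltonian flow: $V_f(f) = df(V_f) = \omega(V_f, V_f) = 0$ up to sign, by antisymmetry of $\omega$. Therefore $P'(f)$ is constant along every integral curve of $V_f$. Let $\varphi_t$ denote the complete flow of $V_f$, defined for all $t \in \C$. Define
\[
\psi_t(x) = \varphi_{t P'(f(x))}(x).
\]
We check that this is the flow of $P'(f) V_f$. Since $f(\varphi_s(x)) = f(x)$, we have $\psi_s(\psi_t(x)) = \varphi_{sP'(f(x))}\varphi_{tP'(f(x))}(x) = \psi_{s+t}(x)$. Differentiating in $t$ gives $P'(f(x))\, V_f(\psi_t(x)) = P'(f(\psi_t(x)))\, V_f(\psi_t(x))$. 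Hence $\psi_t$ is a globally defined holomorphic flow for all complex time, $V_{P(f)}$ is complete, and $P(f) \in IHF_X$.

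The statement is written for the algebraic $X$ and also applies to the Stein setting, so one point needs attention: holomorphicity or regularity of $\psi_t$ in $(t,x)$. This follows because $(s,x) \mapsto \varphi_s(x)$ is holomorphic (respectively algebraic, when the algebraic flow is complete). Constants $P$ give the zero vector field, which is trivially complete. The only mildly delicate step is justifying that the flow formula is valid for complex time, which relies on $\varphi$ being defined on all of $\C \times X$. Beyond that, there is no real obstacle.
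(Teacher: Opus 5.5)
Your proof is correct and is the paper's argument made explicit. The paper only notes that $V_{f^{i+1}}=(i+1)f^iV_f$ (more generally $V_{P(f)}=P'(f)V_f$) is complete, leaving implicit the fact you supply: $f$ is a first integral of $V_f$, so $\psi_t(x)=\varphi_{tP'(f(x))}(x)$ is the global flow. Your caveat about complex time is the right one, since even real $t$ requires $\varphi$ at the complex times $tP'(f(x))$. One small correction: the flow of a complete algebraic vector field need not be algebraic, but all your argument needs is that these integral curves are holomorphic and globally defined.
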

\begin{proof}
    Since the Hamiltonian vector field $V_f$ is complete, $(i+1)f^i V_f$ is also complete and Hamiltonian with function $f^{i+1}$. The same argument works for a polynomial in $f$. 
\end{proof}

\begin{lemma} \label{lem: fgIHF}
    Let $f, g \in IHF_X$. If $\{f, g\}$ is constant, then $f+g, fg \in IHF_X$.
\end{lemma}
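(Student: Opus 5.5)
The plan is to show directly that the flows of $V_f$ and $V_g$ commute, so that the flow of $V_{f+g}$ is their composition, and to treat $fg$ by writing the product in terms of $f+g$ and $f-g$.

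\textbf{Commutation.} Under the sign convention of the Remark, the map $F\mapsto V_F$ is a Lie algebra (anti-)homomorphism:
\[
[V_f,V_g]=\pm V_{\{f,g\}}.
\]
If $\{f,g\}=c$ is constant, then $V_{\{f,g\}}=V_c=0$, so $V_f$ and $V_g$ commute. Both fields are complete, so their flows $\varphi^f_t$ and $\varphi^g_t$ are defined for all $t\in\C$. Commuting complete vector fields have commuting flows, so
\[
\psi_t:=\varphi^f_t\circ\varphi^g_t
\]
is a one-parameter group of automorphisms defined for all $t$. Its generator is $V_f+V_g=V_{f+g}$. Hence $V_{f+g}$ is complete and $f+g\in IHF_X$.

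\textbf{The product $fg$.} Since $-g\in IHF_X$ and $\{f,-g\}=-c$ is also constant, the sum step gives $f-g\in IHF_X$. Lemma \ref{lem: C[h_V]} then gives $(f+g)^2,(f-g)^2\in IHF_X$. We have
\[
fg=\tfrac14\big((f+g)^2-(f-g)^2\big),
\]
but this is only a linear combination, and $IHF_X$ is not closed under sums in general. So we still need
\[
\{(f+g)^2,(f-g)^2\}=4(f+g)(f-g)\{f+g,f-g\}=4(f^2-g^2)(-2c),
\]
which is not constant when $c\neq0$. The decomposition therefore fails as it stands, and I switch to a direct argument.

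\textbf{Direct argument for $fg$.} We have
\[
V_{fg}=fV_g+gV_f.
\]
Along the flow of $V_g$, $\dot f=V_g(f)=\pm\{f,g\}=\pm c$ and $g$ is constant; along the flow of $V_f$, $f$ is constant and $\dot g=\mp c$. On any integral curve of $V_{fg}$ this gives
\[
\dot f=f\,V_g(f)+g\,V_f(f)=\pm cf,\qquad \dot g=\mp cg,
\]
so $f(t)=f_0e^{\pm ct}$ and $g(t)=g_0e^{\mp ct}$ are explicit. The ODE $\dot x=f(t)V_g(x)+g(t)V_f(x)$ is a time-dependent combination of the commuting complete fields $V_f$ and $V_g$. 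Its solution is therefore
\[
x(t)=\varphi^g_{A(t)}\circ\varphi^f_{B(t)}(x_0),\qquad A(t)=\int_0^t f(s)\,ds,\quad B(t)=\int_0^t g(s)\,ds.
\]
This uses commutativity: the composite of the two flows with time-dependent parameters satisfies the ODE. There is a consistency issue, because $f$ and $g$ change along the flows, but that is exactly the scaling already computed, so the two sides match. The resulting $A(t),B(t)$ are entire in $t$, so the solution exists for all complex $t$. Hence $V_{fg}$ is complete and $fg\in IHF_X$.

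\textbf{Main obstacle.} The hard step is checking rigorously that this composite is the integral curve. One must confirm that the values of $f$ and $g$ along the composite agree with the exponentials $f_0e^{\pm ct}$ and $g_0e^{\mp ct}$; for example, $f$ along $\varphi^g_{A}$ changes by $\pm cA$, not multiplicatively. If that fails, the fallback is to solve the reduced system in the coordinates $(f,g)$, namely $\dot f=\pm cf$ and $\dot g=\mp cg$, which is linear and globally solvable. The trajectory in $X$ is then recovered via the commuting flows with parameters $A,B$ obtained from the corrected relation.
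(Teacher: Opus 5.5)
Your proof is correct, and the step you flag as the main obstacle does go through. Put $\kappa := V_g(f)$, so that $V_f(g) = -\kappa$, and let $y(t) := \varphi^g_{A(t)}\circ\varphi^f_{B(t)}(x_0)$. Since $f$ is constant along $\varphi^f$ and $g$ is constant along $\varphi^g$, you get $f(y(t)) = f_0 + \kappa A(t)$ and $g(y(t)) = g_0 - \kappa B(t)$. The additive shift does not conflict with the exponential law, because $A(t) = \int_0^t f_0 e^{\kappa s}\,ds$ gives $\kappa A(t) = f_0(e^{\kappa t}-1)$. Hence $f(y(t)) = f_0 e^{\kappa t} = A'(t)$, and likewise $g(y(t)) = g_0 e^{-\kappa t} = B'(t)$. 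So $\dot y = f(y)V_g(y) + g(y)V_f(y) = V_{fg}(y)$ for all $t$, and the case $\kappa = 0$ is included. Your argument also closes without this check. The maximal integral curve of $V_{fg}$ through $x_0$ solves the nonautonomous equation with the coefficients you computed. By uniqueness it coincides with $y$ on its domain, and since $y$ is defined for all $t$, that domain is everything.

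For $f+g$ you argue exactly as the paper does. For $fg$ the paper avoids explicit integration. Since $V_f(g)$ and $V_g(f)$ are constant, $gV_f$ and $fV_g$ are each complete: along a trajectory of $V_f$ the function $g$ is affine in time, so the reparametrization ODE is linear. Moreover
$[gV_f, fV_g] = g\,V_f(f)\,V_g - f\,V_g(g)\,V_f + fg\,[V_f,V_g] = 0$,
so their sum $V_{fg}$ is complete by the same commuting-flows principle used for $f+g$. Your $y(t)$ is exactly the composition of the time-$t$ flows of $gV_f$ and $fV_g$, so the two proofs have the same content. The paper's packaging is shorter and never raises the consistency question; yours gives the flow of $V_{fg}$ in closed form. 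The detour through $(f\pm g)^2$ is unnecessary, though you correctly diagnosed why it fails.
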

\begin{proof}
    Let $V, W$ be the Hamiltonian vector fields associated to $f$ and $g$, respectively. From the assumption, $[V,W]=0$. Then since $V, W$ are complete, $V+W$ is complete as well. Thus $f+g \in IHF_X$ and also $(f+g)^2 \in IHF_X$ by Lemma \ref{lem: C[h_V]}. 
    
    The Hamiltonian vector field of $fg$ is $gV + fW$. Since $V(g) = - W(f) = -\{ f, g \}$ is constant, we have that $gV$ and $fW$ are complete. Finally, the vanishing of their commutator $[gV, fW]$ yields the complete integrability of their sum. 
\end{proof}

\subsection{Hamiltonian functions on the cotangent bundle}

Let $M$ be a complex manifold of dimension $n$, $\holo_M$ the sheaf of holomorphic functions on $M$ and $\mathcal{T}_M$ the tangent sheaf of $M$. Denote by $\vect(M) = \Gamma(M, \mathcal{T}_M)$ the $\holo_M$-module of holomorphic vector fields on $M$.

The $k$-th symmetric power of the tangent sheaf $\mathcal{T}_M$, denoted by $\Sym^k \mathcal{T}_M$, is locally free of rank $\binom{n+k-1}{k}$ and thus coherent.
For a vector space $E$, the $\C$-algebra $\Sym^k E$ of $k$-th symmetric tensors  is isomorphic to the $\C$-algebra $\C[E^*]_k$ of homogeneous polynomials of degree $k$ on the dual space $E^*$:
\begin{align} \label{symTensor<->homogPolyn}
    \sigma\colon \sym^k E \to \C[E^*]_k, \quad \sigma(v_1 v_2 \cdots v_k)(l) := l(v_1) l(v_2)\cdots l(v_k) \quad \forall \, l \in E^*, 
\end{align}
which has the $k$-th derivative at the origin $0 \in E^*$ as its inverse. Applying this isomorphism fiberwise to the tangent sheaf we obtain the morphism of sheaves of $\holo_M$-modules
\begin{align*}
    \sigma \colon \sym^k \mathcal{T}_M \to \pi_* \holo_{T^*M}, \quad \sigma(S)(q,p) := \langle S(q), p^{\otimes k} \rangle \quad \forall (q,p)\in T^*M 
\end{align*}
which is onto the subsheaf of fiberwise homogeneous polynomials of degree $k$ with $\holo(M)$ coefficients. 
This gives a description for 
\begin{align*}
    \Gamma(M, \sym^k \mathcal{T}_M) &\cong \{ f \in \holo(T^*M) : f \text{ homogeneous of degree $k$ on fibers} \} \\
    &= \{ f \in \holo(T^*M): \Theta f = k f \} \text{ where $\Theta$ is the Euler vector field}.
\end{align*}

The projection from $\holo(T^*M)$ to the $k$-th homogeneous component can be written as a Fourier projection along the fiber:
\[
f_k(q,p)=\frac1{2\pi i}\oint_{|\lambda|=1}f(q,\lambda p)\,\lambda^{-k-1}\,d\lambda \quad \text{ for } f \in\holo(T^*M),
\]
which is the coefficient in the Taylor expansion $f(q, \lambda p) = \sum f_k \lambda^k$. By Cauchy's estimate the sum of homogeneous components converges uniformly on compact subsets, therefore 
\begin{align} \label{dense-in-O(T*M)}
    \bigoplus_{k}\Gamma(M, \sym^k \mathcal{T}_M)  \text{ is dense in } \holo(T^*M).
\end{align}
With this identification in mind, we consider the filtered sheaf on $M$
\begin{align*}
    \mathcal{F}_k = \bigoplus_{j=0}^k \Sym^j \mathcal{T}_M \subset \pi_* \holo_{T^*M},
\end{align*}
which is coherent as being a direct sum of coherent sheaves. At the two lowest degrees 
\begin{align*}
    \holo(T^*M)_0 = \pi^* \holo(M), \quad \holo(T^*M)_1 = \{\, h_V : V \in \vect(M) \, \}. 
\end{align*}
With respect to the Poisson bracket of the canonical symplectic form on $T^*M$ the filtration behaves as
\begin{align} \label{filtration-bracket}
    \{ \, \holo(T^*M)_j, \holo(T^*M)_k \} \subset \holo(T^*M)_{j+k-1}
\end{align}
Furthermore $\pi_* \holo_{T^*M}$ can be seen as the sheaf of graded algebras of differential operators on $M$, see Hotta, Takeuchi and Tanisaki \cite{MR2357361}*{\S 1.1}.

\section{Proof of the Symplectic Density Property} \label{sec: KK-Method}

We explain the main steps in proving Theorem \ref{thm: algHDP} and Theorem \ref{thm: holoHDP} using notations adapted to the Stein setting, while most arguments apply verbatim to the affine case. 

We may assume $\dim M \ge 2$. 
In dimension 1, the only smooth flexible affine variety is $\C$ and the holomorphically flexible Stein manifolds are $\C$ or $\C^*$. Their cotangent bundles have the symplectic and the Hamiltonian density property, see Forstneri{\v c} \cite{MR1408866}*{\S 5}, Dror Varolin \cite{MR1829353} and Rafael Andrist--Huang \cite{MR4986786}*{Lemma 3.2, Theorem 3.5}.
\medskip

First, we reduce the symplectic case to a Hamiltonian one. 

\begin{lemma} \label{SDP-to-HDP}
    If the cotangent bundle $T^* M$ has the Hamiltonian density property, then it has the symplectic density property. 
\end{lemma}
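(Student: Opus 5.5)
Every holomorphic symplectic vector field $X$ on $T^*M$ corresponds, via $i_X\omega$, to a closed holomorphic $1$-form $\theta$ on $T^*M$. Since $\pi\colon T^*M\to M$ is a vector bundle, it is a homotopy equivalence, so $H^1(T^*M,\C)\cong H^1(M,\C)$ via $\pi^*$. On a Stein manifold every de Rham class is represented by a holomorphic form, and a closed holomorphic $1$-form with zero class is $dF$ for a holomorphic $F$ (holomorphic de Rham theorem). Hence
\[
\theta=\pi^*\beta+dF
\]
with $\beta$ a closed holomorphic $1$-form on $M$ and $F\in\holo(T^*M)$. Concretely, $\theta-\pi^*\beta$ is exact once its periods vanish, and these periods may be computed on cycles in the zero section. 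Therefore
\[
X=V_\beta+X_F,
\]
where $V_\beta$ is the vertical lift of $\beta$ and $X_F$ is the Hamiltonian vector field of $F$.

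By Lemma \ref{lem: fctsonM}, $V_\beta$ is a complete symplectic vector field, so it lies in the Lie algebra generated by complete symplectic vector fields. By the assumed Hamiltonian density property, $X_F$ is a compact-open limit of elements of the Lie algebra generated by complete Hamiltonian vector fields. These are in particular complete symplectic fields. The correspondence $F\mapsto X_F$ is continuous from $\holo(T^*M)$ to vector fields, so density of $\lie_{T^*M}$ in $\holo(T^*M)\bmod \C$ transfers to density of the associated vector fields. Adding $V_\beta$ to an approximating sequence for $X_F$ gives an approximating sequence for $X$ inside the Lie algebra generated by complete symplectic fields. This is exactly the symplectic density property.

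The main step requiring care is the cohomological splitting, namely that every closed holomorphic $1$-form on $T^*M$ equals $\pi^*\beta$ plus an exact holomorphic form. I would justify it as follows. Let $s$ be the zero section and set $\beta=s^*\theta$, which is closed and holomorphic on $M$. Since $\pi\circ s=\id$ and $s\circ\pi$ is homotopic to the identity via fiber scaling $p\mapsto tp$, the forms $\theta$ and $\pi^*s^*\theta$ are cohomologous. Their difference is then a closed holomorphic $1$-form with zero periods, and integrating it gives a holomorphic primitive $F$. In the algebraic setting the same argument would need algebraic de Rham cohomology (Grothendieck). Alternatively one can use the fiberwise-homotopy (Poincaré lemma along the fibers) formula to produce $F$ directly; this formula preserves algebraicity because the fibers are linear.
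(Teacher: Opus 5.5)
Your proof is correct and follows the same route as the paper. You write $i_X\omega$ as $\pi^*\beta$ plus an exact form using $H^1(T^*M)\cong H^1(M)$, subtract the complete vertical lift $V_\beta$ from Lemma \ref{lem: fctsonM}, and approximate the remaining Hamiltonian field using the assumed Hamiltonian density property. Two of your choices usefully make explicit a point the paper leaves implicit: taking $\beta=s^*\theta$ via the zero section, and obtaining $F$ from the fiberwise homotopy formula. Together they show that the representative on $M$ and the primitive on $T^*M$ can be taken holomorphic (respectively algebraic).
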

\begin{proof}
When the cotangent bundle has trivial first De Rham cohomology, we are in the Hamiltonian case. If not, let $V$ be a symplectic vector field on $\cotbun$ which is not Hamiltonian. Then the 1-form $i_V \omega$ represents a nontrivial class in $H^1_{DR}(T^*M)$. 
Since the total space $T^*M$ is homotopy equivalent to the base $M$, there is a closed 1-form $\alpha$ on $M$ representing the corresponding class in $H^1_{DR}(M)$. 
Then through the correspondence \eqref{lift-1-form-sympl}, we obtain a symplectic, non-Hamiltonian vector field $W$ on $T^*M$, which is the vertical lift of $\alpha$ and thus complete. Then the difference $V-W$ is Hamiltonian and can be written as a Lie combination of complete Hamiltonian vector fields by assumption. 
\end{proof}

\medskip

By the holomorphic flexibility of $M$ there exist finitely many complete vector fields $V_1, \dots, V_N$ which span the tangent bundle $TM$. Thus $\sym^k TM$ can be spanned by symmetric tensors of degree $k$ in $V_1, \dots, V_N$, which by the correspondence \eqref{symTensor<->homogPolyn} are monomials of degree $k$ in $h_{V_1}, \dots, h_{V_N}$. Hence
\begin{align} \label{deg-k-surjection}
    \holo(M) [h_{V_1}, \dots, h_{V_N}]_k \twoheadrightarrow \holo(T^*M)_k \cong \Gamma(M, \sym^k \mathcal{T}_M). 
\end{align}

By Cartan's Theorem B, for any holomorphic vector field $V$ on $M$, there exist functions $f_1, f_2, \dots, f_N$ in $\holo(M)$ such that
\begin{align} \label{VF-sum}
    V = f_1 V_1 + f_2 V_2 + \cdots + f_N V_N.
\end{align}
By Lemma \ref{mix-field-function} we have  $\holo(M)[h_{V_i}] \subset \lie_X$. Hence 
\begin{align} \label{h_V-in-LieX}
    h_V = \sum_{i=1}^N f_i h_{V_i} \in \lie_X \text { for any vector field } V \text{ on } M. 
\end{align}

For the Hamiltonian density property of $X=T^*M$, it suffices to show 
\begin{align} \label{verifyHDP}
    \holo(M) [h_{V_1},h_{V_2},\dots, h_{V_N}] \subset \lie_X
\end{align}
since $\holo(M) [h_{V_1},\dots, h_{V_N}]$ is dense in $\holo(T^*M)$ by \eqref{dense-in-O(T*M)} and \eqref{deg-k-surjection}.

\medskip

Fix $k \in \N$ and consider $\mathcal{F}_k = \oplus_{j \le k} \Sym^j \mathcal{T}_M$.

\begin{lemma} \label{Stalk-at-pt}
    There exist a subsheaf $\mathcal{E}_k \subset \mathcal{F}_k$ of $\holo_M$-modules, such that all global sections of $\mathcal{E}_k$ are in $\lie_X$ and the stalks $(\mathcal{E}_k)_q = (\mathcal{F}_k)_q$ are identical for every $q \in M$.
\end{lemma}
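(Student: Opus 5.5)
The plan is to take for $\mathcal{E}_k$ the subsheaf generated by an $\holo(M)$-module of global sections that lies in $\lie_X$. Let
\[
A_k := \{\, s \in \Gamma(M,\mathcal{F}_k) : (\pi^*f)\, s \in \lie_X \text{ for all } f \in \holo(M) \,\}.
\]
This is an $\holo(M)$-submodule of $\Gamma(M,\mathcal{F}_k)$. We define $\mathcal{E}_k \subset \mathcal{F}_k$ as the $\holo_M$-subsheaf generated by $A_k$, that is, the image of the sheaf map $A_k \otimes_\C \holo_M \to \mathcal{F}_k$. Two things then have to be checked: that every global section of $\mathcal{E}_k$ lies in $\lie_X$, and that the stalks agree.

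\emph{Global sections.} Since $\mathcal{E}_k$ is a subsheaf of the coherent sheaf $\mathcal{F}_k$ generated by global sections, and $M$ is Stein, we can arrange it to be generated by finitely many $s_1,\dots,s_m \in A_k$. Concretely, $\mathcal{F}_k$ is coherent, so an increasing union of coherent subsheaves stabilizes on compacts. For honest finiteness we choose the $s_i$ so that they generate at every point, as in the choice of $V_1,\dots,V_N$ above. Cartan's Theorem~B applied to the surjection $\holo_M^m \to \mathcal{E}_k$ then writes any global section as $\sum_i f_i s_i$ with $f_i \in \holo(M)$, and such a sum lies in $\lie_X$ by the definition of $A_k$.

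\emph{Stalks.} By Nakayama's lemma, applied to the finitely generated $\holo_{M,q}$-module $(\mathcal{F}_k)_q$, it suffices to show that the values $\{ s(q) : s \in A_k \}$ span the fiber
\[
\mathcal{F}_k(q) = \bigoplus_{j\le k} \Sym^j T_qM .
\]
By Lemma~\ref{mix-field-function}, $h_U^j \in A_k$ for every complete vector field $U$ on $M$ and every $j \le k$. Under the identification \eqref{symTensor<->homogPolyn}, its value at $q$ is the pure power $U(q)^j$. By polarization, the pure powers $v^j$ span $\Sym^j T_qM$ as soon as $v$ ranges over a set $C_q \subset T_qM$ that is not contained in any hypersurface of degree $j$. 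So the task is to show that the cone
\[
C_q = \{\, U(q) : U \text{ complete} \,\}
\]
is Zariski dense in $T_qM$.

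This density is the step I expect to be the main obstacle. Flexibility only says that $C_q$ spans $T_qM$, and sums of complete fields are not complete, so the argument has to use the symplectic lifts. For complete $V,W$ and $\varphi_t$ the flow of $W$, the field $(\varphi_t)_*V$ is again complete. Moreover, pulling back by the (complete, symplectic) Hamiltonian lift of $\varphi_t$ maps $\lie_X$ into itself and maps $h_V$ to $h_{(\varphi_t)_*V}$. Hence $C_q$ contains the holomorphic curves
\[
t \mapsto d\varphi_t\bigl(V(\varphi_{-t}q)\bigr),
\]
whose derivative at $t=0$ is $[W,V](q)$. I would argue as follows. Let $Z$ be the Zariski closure of $C_q$, and suppose $Z$ is contained in a hypersurface $\{P=0\}$ with $P$ homogeneous. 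Differentiating $P\bigl(((\varphi_t)_*V)(q)\bigr)=0$ in $t$, and also using the rescalings $gV$ with $V(g)=0$, should force $P$ to vanish on the linear span of $C_q$. That span is all of $T_qM$, which gives the contradiction.

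If this geometric route fails, the fallback is algebraic. I would generate the mixed monomials $h_{V_1}^{a_1}\cdots h_{V_N}^{a_N}$ modulo lower filtration degree from Poisson brackets of elements of the form $(\pi^*f)h_{V_i}^a$. Using \eqref{filtration-bracket} and induction on $k$, the lower-order terms already lie in $\mathcal{E}_{k-1}$, and their stalks agree by the induction hypothesis.
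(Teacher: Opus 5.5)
Your module $A_k$ is a good choice. It contains the paper's module, it is automatically an $\holo(M)$-module, and it lies in $\lie_X$ (take $f=1$). However, the stalk step, which is the real content of the lemma, has a genuine gap.

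The Zariski density of $C_q$ is not established by your sketch. Differentiating $P\bigl(((\varphi_t)_*V)(q)\bigr)=0$ only shows that $[W,V](q)$ is tangent to $\{P=0\}$ at $V(q)$ for complete $W$. Rescaling $V$ by a first integral $g$ only moves $V(q)$ along its own line (or to $0$), and for homogeneous $P$ this adds nothing by Euler's identity. To force $P$ to vanish on the span, you would need control over the linear parts $DV(q)$, $DW(q)$ of complete fields at $q$. Holomorphic flexibility is a condition on values only and gives no such control. Nothing in your argument rules out, say, $C_q$ lying in a nondegenerate quadric cone, which still spans $T_qM$.

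The fallback does not supply a mechanism either. Brackets of $(\pi^*f)h_{V_i}^a$ and $(\pi^*g)h_{V_j}^b$ are homogeneous of degree $a+b-1$, so there are no lower-order terms to induct away. They also produce terms $fg\,h_{V_i}^{a-1}h_{V_j}^{b-1}h_{[V_i,V_j]}$ that you do not control.

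The missing idea is that you need not restrict to complete fields. By \eqref{h_V-in-LieX}, $h_W\in\lie_X$ for every holomorphic vector field $W$, complete or not. Hence $A_k$ is stable under $L_W=\{h_W,\cdot\}$, because $(\pi^*f)\{h_W,s\}=\{h_W,(\pi^*f)s\}-(\pi^*W(f))\,s$. Now choose $W$ with $W(q)=0$ and prescribed $DW(q)=A\in\mathfrak{gl}(T_qM)$, which Theorem A allows. Then $\{h_W,s\}(q)$ is, up to sign, the natural action of $A$ on $s(q)$. So the image of $A_k\cap\Gamma(M,\Sym^j\mathcal{T}_M)$ in $\Sym^jT_qM$ is a $\mathfrak{gl}(T_qM)$-submodule. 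It is nonzero because it contains some $V_i(q)^j\neq0$, so it is all of $\Sym^jT_qM$ by irreducibility, and Nakayama finishes. This is exactly the paper's argument, and it never needs Zariski density of $C_q$.

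Separately, your finiteness claim in the global-sections step is not justified as written. Noether stabilization is only local. An $\holo(M)$-submodule spanning every fiber need not contain finitely many sections spanning everywhere: the ideal of entire functions vanishing at all but finitely many positive integers has no common zeros, yet no finite subset of it does. The paper obtains finiteness only in Lemma \ref{stalks-to-sheaf}, using $\aut(M)$-invariance and transitivity. For the lemma as the paper uses it, you only need that the elements of $A_k$ lie in $\lie_X$, which is immediate.
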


\begin{proof}
    From Lemma \ref{mix-field-function} we have $\holo(M)h_{V}^k \subset \lie_X$ for every complete vector field $V$ and by \eqref{h_V-in-LieX} $h_W \in \lie_X$ for any vector field $W$ on $M$. 
    We take $\mathcal{E}_k$ to be the smallest linear subspace of $\mathcal{F}_{k}$ containing 
    \begin{align*}
        \{ \, f h_{V}^k: \, f \in \holo(M),  \, V \in \vect(M) \text{ complete} \,\} \subset \lie_X
    \end{align*}
    and stable under Lie derivatives $L_V$ with respect to any vector field $V$ on $M$. Note that for $f \in \holo(T^*M)$ one has $L_V f = \{ h_V, f \}$. 

    We claim that $\mathcal{E}_k$ is an $\holo_M$-module. Indeed we can write $\mathcal{E}_k$ as the union of subsets graded by iterative action of the Lie derivative
    \[
        \mathcal{E}_k = \bigcup_{l=0}^\infty \mathcal{E}_k^l
    \]
    where $\mathcal{E}_k^0 = \{ \, f h_{V}^j: \, f \in \holo(M),  \, V \in \vect(M) \text{ complete}, j \le k \, \}$ and 
    \[
        \mathcal{E}_k^{l+1} = \mathcal{E}_k^l + \{\, L_V s: V\in \vect(M), s \in \mathcal{E}_k^l \,\}.
    \]
    At the induction base $\mathcal{E}_k^0$ is an $\holo_M$-module by Lemma \ref{mix-field-function}. Suppose $\mathcal{E}_k^l$ is an $\holo_M$-module. Let $f,g \in \holo(M)$, $V \in \vect(M)$ and $s \in \mathcal{E}_k^l$ one has
    \[
        f L_V s = L_V( f s) - V (f) s \in \mathcal{E}_k^{l+1}
    \]
    which shows $\mathcal{E}_k^{l+1}$ is an $\holo_M$-module.

    By definition and \eqref{h_V-in-LieX} all global sections of $\mathcal{E}_k$ are in $\lie_X$.
    
    Now let $j \le k$. At $q \in M$ the evaluation
    \[
        \mathcal{E}_k(q) \cap \sym^j(T_qM) \subset \sym^j(T_qM)
    \]
    is by definition stable under $\mathfrak{gl}(T_qM)$: for any $A \in \mathfrak{gl}(T_qM)$ there exists a holomorphic vector field $V$ on $M$ such that $V(q)=0$, $DV(q) = A$ thus $L_V s = A s$ for $s \in \mathcal{E}_k(q) \cap \sym^j(T_qM)$. On a Stein manifold, we can prescribe the first jet of a holomorphic vector field at a point using the coherence of $\vect(M)$ and Cartan's Theorem A. From the irreducibility of $\sym^j(T_qM)$ as a $\mathfrak{gl}_n$-module (see Fulton-Harris \cite{MR1153249}*{Theorem 6.3}) and the fact that $n$ vectors from $V_1(q), V_2(q), \dots, V_N(q)$ span $T_qM$, it follows 
    \[
        \mathcal{E}_k(q) = \bigoplus_{j \le k} \sym^j (T_qM) = \mathcal{F}_k(q).
    \]
    By Nakayama's lemma, the stalks of $\mathcal{E}_k$ and $\mathcal{F}_k$ at $q$ are identical. 
\end{proof}

\begin{lemma} \label{stalks-to-sheaf}
    There exist finitely many global sections $s_1, s_2, \dots, s_m$ of $\mathcal{E}_k \subset \mathcal{F}_k$ such that the germs $(s_i)_q$ generate the stalk $(\mathcal{F}_k)_q$ at every $q \in M$.
\end{lemma}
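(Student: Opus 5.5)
The plan is to combine the local statement of Lemma \ref{Stalk-at-pt} with the standard Stein-manifold argument that a coherent sheaf generated by global sections at every point is already generated by finitely many of them. Every germ in $\mathcal{F}_k$ should be expressible through global sections of $\mathcal{E}_k$. The finiteness will come from a dimension-counting induction over an exhaustion, or equivalently from a general-position argument.

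First, I will check that pointwise generation by global sections of $\mathcal{E}_k$ holds. By Lemma \ref{Stalk-at-pt}, $(\mathcal{E}_k)_q=(\mathcal{F}_k)_q$ for every $q$. Since $\mathcal{F}_k$ is coherent (indeed locally free) on the Stein manifold $M$, Cartan's Theorem A says that global sections of $\mathcal{F}_k$ generate every stalk. This alone is not enough, because we need sections lying in $\mathcal{E}_k$. However, $\mathcal{E}_k$ was constructed directly from global objects: each element of each $\mathcal{E}_k^l$ is a global section of $\mathcal{F}_k$. The proof of Lemma \ref{Stalk-at-pt} showed that the values at $q$ of these global elements fill the fibre $\mathcal{F}_k(q)=\mathcal{F}_k\otimes \C(q)$. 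By Nakayama's lemma, finitely many of them (as many as the rank $r$ of $\mathcal{F}_k$) therefore generate the stalk $(\mathcal{F}_k)_q$. Moreover, they generate $\mathcal{F}_k$ on a neighbourhood of $q$, because $\mathcal{F}_k$ is locally free and $r$ sections with independent values at $q$ stay independent nearby.

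Second, I will pass to a global finite family. Choose global sections $t_1,\dots,t_r$ of $\mathcal{E}_k$ that form a frame at some point. The set $A_1$ where they fail to span the fibre is a proper analytic subset, namely the zero set of $t_1\wedge\cdots\wedge t_r$ in $\Lambda^r\mathcal{F}_k$. I then adjoin further sections to lower the dimension of the bad locus. The bad locus $A$ of the current finite family has countably many irreducible components. In each component I pick a point $q_j$, with the points forming a discrete set. I want a single section $s$ in the linear span of the global $\mathcal{E}_k$-elements, or finitely many such sections, whose addition makes the family span at every $q_j$. Using infinite combinations $\sum c_j u_j$ with rapidly decreasing coefficients is problematic, since $\mathcal{E}_k$ consists of finite combinations. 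So I will instead use the fact that the whole span $\mathcal{E}_k(M)$ is an $\holo(M)$-module, as was shown in Lemma \ref{Stalk-at-pt}. For each $q_j$ I choose $r$ elements spanning the fibre there. Multiplying by functions $f\in\holo(M)$ then allows interpolation: the sums $\sum_i f_i u_i$, with the $u_i$ taken from a fixed finite list in the module, can realize prescribed values on a discrete set. This again needs only finitely many $u_i$ locally, which is subtle.

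Cleaner route: the $\holo(M)$-module $\mathcal{E}_k(M)$ generates a coherent subsheaf $\mathcal{G}\subset\mathcal{F}_k$ with $\mathcal{G}_q=(\mathcal{F}_k)_q$ for all $q$, so $\mathcal{G}=\mathcal{F}_k$. Since $\mathcal{F}_k$ is locally free of finite rank on a Stein manifold of dimension $n$, it is generated by finitely many global sections, at most $r(n+1)$ of them, by the standard result (Forster/Ramspott, or the general-position argument in Forstneri\v{c}'s book). The remaining step is to make these sections lie in $\mathcal{E}_k(M)$. To do that, I take generic $\holo(M)$-combinations of elements of $\mathcal{E}_k(M)$, built by induction on the dimension of the bad locus: at each step the new sections are finite $\holo(M)$-combinations, so they stay in $\mathcal{E}_k(M)$. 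The main obstacle is exactly this step: turning "generated by the module $\mathcal{E}_k(M)$" into "generated by finitely many elements of it". I would handle it via the dimension-drop induction, choosing the multipliers by Cartan's Theorem B on the discrete set of points $q_j$. After $n+1$ steps the bad locus is empty, which yields the finitely many sections $s_1,\dots,s_m$.
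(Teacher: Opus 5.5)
There is a genuine gap, and it sits exactly in the step you call the main obstacle. Your pointwise step is fine (global elements of $\mathcal{E}_k$ span each fibre, Nakayama, openness of the spanning condition). In the affine case your dimension-drop induction would also close, because the bad locus has only finitely many irreducible components. In the Stein case, however, the bad locus $B$ of your current finite family may have infinitely many irreducible components $B_1,B_2,\dots$. You then need \emph{finitely many} new elements of $\mathcal{E}_k$ whose values span $\mathcal{F}_k(q_j)$ at infinitely many points $q_j$ at once.

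Multipliers from $\holo(M)$ cannot achieve this. For a finite list $u_1,\dots,u_L\in\mathcal{E}_k$ and any $f_i\in\holo(M)$, the value of $\sum_i f_iu_i$ at $q$ lies in the span of $u_1(q),\dots,u_L(q)$. So Theorem B interpolation on $\{q_j\}$ only redistributes what the finite list already spans, and the existence of such a list is exactly what has to be proved.

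In fact, the only properties you use do not imply the conclusion. You use that $\mathcal{E}_k$ is an $\holo(M)$-submodule of $\Gamma(M,\mathcal{F})$ whose values span every fibre of a locally free $\mathcal{F}$. Take $M=\C^2$, $\mathcal{F}=\holo_M$, and let $E$ be the ideal of entire functions vanishing on all but finitely many of the lines $\{z_1=m\}$, $m\in\N$. This $E$ has no common zero: by Weierstrass, for each $q$ some entire function of $z_1$ vanishes at all $m\in\N\setminus\{q_1\}$ but not at $q_1$. Yet any finitely many elements of $E$ vanish simultaneously on $\{z_1=m\}$ for all large $m$. So some further structure of $\mathcal{E}_k$ must enter.

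The missing ingredient, which the paper uses, is the $\aut(M)$-invariance of $\mathcal{E}_k$ combined with transitivity. For $\psi\in\aut(M)$ the cotangent lift is a symplectomorphism, and $\psi^*$ sends complete fields to complete fields. It also satisfies $\psi^*(L_Vs)=L_{\psi^*V}\psi^*s$, so $\psi^*\mathcal{E}_k\subset\mathcal{E}_k$. Moreover, $\psi^*s_1,\dots,\psi^*s_m$ span at $x$ if and only if $s_1,\dots,s_m$ span at $\psi(x)$.

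Holomorphic flexibility with $\dim M\ge 2$ makes $\aut(M)$ transitive. A Baire category argument in the completely metrizable group $\aut(M)$ (the paper cites Kutzschebauch, Lemma 25) then gives a single automorphism $\phi$ with $\phi(B_i)\not\subset B$ for every $i$. Adjoining the $m$ sections $\phi^*s_i$ shrinks the bad locus to $B\cap\phi^{-1}(B)$. This meets each $B_i$ in a proper analytic subset, so the dimension drops and the induction ends after at most $n$ steps.

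The point is that one automorphism transports the whole finite family and thereby treats all infinitely many components simultaneously. That is precisely what $\holo(M)$-linear combinations cannot do.
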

\begin{proof}
    We start at one point $q$ and a set of global sections of $\mathcal{E}_k$ spanning the fiber $\mathcal{E}_k(q)= \oplus_{j \le k} \sym^j (T_qM)$. Let $A$ be the analytic subvariety where these sections do not span. Take $q' \in A$. Since $M$ is holomorphically flexible and $\dim M \ge 2$, the group of automorphisms $\aut(M)$ acts transitively on $M$. Thus, there is an automorphism $\psi$ taking $q'$ to a point $\psi(q') \in M \setminus A$ where the existing sections span. Then adding  $\psi^* s_i$ to the set of sections we get spanning sections at $q'$. Note that $\mathcal{E}_k$ is invariant under pullback by automorphisms.

The points of $M$ where the sections still fail to span is a proper subset $B$ of lower dimension in $M$. On $B$ we can repeat the above process. In the affine case, the transitivity of $\aut(M)$ and an induction on the dimension of $B$ give global sections spanning everywhere. In the Stein case, the analytic subset might have countably many irreducible components $B_1, B_2, \dots$. Since $\aut(M)$ is a complete metric space, there exists by the second author \cite{MR3229363}*{Lemma 25} a holomorphic automorphism $\phi$ such that $\phi(M \setminus B) \cap B_i \neq \emptyset$ for every $i = 1, 2, \dots$. 

Therefore finitely many sections of $\mathcal{E}_k$ generate the stalks of $\mathcal{F}_k$ everywhere.
\end{proof}

This induces a morphism of sheaves $\holo_M^m \to \mathcal{F}_k$ which is surjective at the stalks. 
A standard application of Theorem B to the $\holo_M$-module $\mathcal{F}_k$ implies the global sections of the two sheaves $\mathcal{E}_k$, $\mathcal{F}_k$ are identical and thus all global sections of $\mathcal{F}_k$ are in $\lie_X$. Since this holds for arbitrary $k \in \N$, we obtain $\holo(M)[h_{V_1}, \dots, h_{V_N}] \subset \lie_{X}$.  
By the observation \eqref{verifyHDP}, this finishes the proof of Theorem \ref{thm: algHDP} and Theorem \ref{thm: holoHDP}.

\smallskip

\section{Holomorphic Approximation for Hamiltonian Diffeomorphisms} \label{sec: Ham-Carleman}

Let $X$ and $Y$ be smooth manifolds and $p \in X$. Consider smooth maps $f, g \colon X \to Y$ with $f(p)=g(p)=q$. We say that $f$ has first order contact with $g$ at $p$ if $df_p = dg_p$ as maps of $T_pX \to T_q Y$; and $f$ has $k$th order contact with $g$ at $p$ if $df \colon TX \to TY$ has $(k-1)$th order contact with $dg$ at every point in $T_p X$. This gives an equivalence relation for smooth maps in $C^\infty(X,Y)$ for each positive integer $k$. Denote by $J^k(X,Y)_{p,q}$ the set of equivalence classes of maps $f \colon X \to Y$ with $f(p)=q$. The disjoint union $J^k(X,Y) := \cup_{(p,q) \in X \times Y} J^k(X,Y)_{p,q}$ is a smooth manifold. 

Every smooth map $f \colon X \to Y$ induces a smooth map $j^kf \colon X \to J^k(X,Y)$ defined by 
\[
    j^kf(p)= [f] \in J^k(X,Y)_{p,f(p)} \text{ for every } p \in X. 
\]
The family of sets $\{ f \in C^\infty(X,Y): j^kf(X) \subset U \}$ where $U$ is an open subset of $J^k(X,Y)$ form a basis for a topology on $C^\infty(X,Y)$, which is called the Whitney $C^k$-topology. By choosing Riemannian metrics on the manifolds $X$ and $Y$ one can equip the fibers of the jet space with norms, i.e. the jet space $J^k(X,Y)$ with a Finsler metric. Over a compact $X$ the jet space $J^k(X,Y)$  is then equipped with
a norm (the $C^k$-norm) compatible with the the Whitney $C^k$-topology. By using a sequence of exhausting compacts of $X$  one gets a metric $d_k$ on the manifold $J^k(X,Y)$ compatible with this topology.

\begin{definition}\label{definition: realform}
(1) Let $M_\R$ be a smooth manifold of dimension $n \ge 1$. A \textbf{complexification} $(M, \tau, \jmath)$ of $M_\R$ is a complex manifold $M$ of complex dimension $n$, together with an antiholomorphic involution $\tau \colon M \to M$ and a smooth map $\jmath \colon M_\R \to M$, such that $\jmath$ is a proper embedding and the image of $M_\R$ is the fixed-point set of $\tau$.

\smallskip
(2) [Deng-Wold \cite{MR4423269}]
Let $(X_\R, \omega_\R)$ be a smooth symplectic manifold of dimension $2n, \, n \ge 1$. A \textbf{symplectic complexification} $(X, \omega, \sigma)$ of $(X_\R, \omega_\R)$ is a holomorphic symplectic manifold $X$ of complex dimension $2n$, together with a holomorphic symplectic form $\omega$, an antiholomorphic involution $\sigma \colon X \to X$ and a smooth map $\jmath \colon X_\R \to X$, satisfying
\begin{enumerate}[label=(\roman*)]
	\item		$(X, \sigma)$ is a complexification of $X_\R$.
	\item		The pullback $\jmath^\ast \omega$ coincides with $\omega_\R$. 	
	\item		The pullback $\sigma^\ast \omega$ is the complex-conjugated $\overline{\omega}$.
\end{enumerate}
\end{definition}

Let $M$ be a Stein manifold and $\tau \colon M \to M$ an antiholomorphic involution. 
The involution $\tau$ on $M$ induces an  involutive fiberwise $\C$-linear bundle map on the complexified cotangent bundle $T_\R^*M \otimes \C$
\begin{align*}
    (d \tau)^* \colon T_\R^*M \otimes \C \to T_\R^*M \otimes \C,
\end{align*}
which maps $\cotbun = (T^{(1,0)} M)^*$ to $(T^{(0,1)} M)^*$ and vice versa. Composition with complex conjugation
\begin{align*}
    T_\R^*M \otimes \C \to T_\R^*M \otimes \C, (x, p_x) \mapsto (\tau(x), \overline{(d_{\tau(x)} \tau)^* (p_x )})
\end{align*}
restricts to an antiholomorphic involution on the holomorphic cotangent bundle
\begin{align*}
    \ttau: \cotbun  \to \cotbun, (x, p_x) \mapsto (\tau(x), \overline{(d_{\tau(x)} \tau)^* (p_x )}).  
\end{align*}

Denote by $M_\R = \mathrm{Fix}(\tau)$ the set of fixed points of $\tau$. We assume $M_\R \neq \emptyset$, then $M_\R$ is a totally real submanifold of maximal dimension. Indeed, by Aldo Andreotti and Per Holm \cite{MR0589903}*{Proposition 1.3} there exist holomorphic coordinates $z_1, z_2, \dots, z_{n}$ around any point $x \in M_\R$ such that $\tau(z_i) = \overline{z_i}$ for $i= 1, \dots, n$. 
Since $\R$-linear functionals on $T^*_{\R,x} M_\R$ can be considered as $\C$-linear functionals on $T^*_x M$, $\rcotbun$ is a subbundle of $\cotbun \rvert_{M_\R}$ given as the fixed-point set of $\ttau$. 

\begin{lemma} \label{T^*M-SympCompl}
    $(\cotbun, \omega, \ttau)$ is a symplectic complexification of $(\rcotbun, \omega_\R)$. 
\end{lemma}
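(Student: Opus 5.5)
The plan is to check the three conditions of Definition \ref{definition: realform}(2) in turn, working in the local coordinates of Andreotti--Holm, where everything becomes explicit. Fix $x \in M_\R$ and choose holomorphic coordinates $z_1,\dots,z_n$ near $x$ with $z_i\circ\tau = \overline{z_i}$. Write $z_i = q^i$, and let $p_1,\dots,p_n$ be the induced fiber coordinates on $\cotbun$, so that a covector is $p=\sum p_i\,dz_i$.

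\textbf{The involution $\ttau$ in coordinates.} Since $\tau^*dz_i = d\bar z_i$, we get $(d\tau)^*(dz_i)=d\bar z_i$, and complex conjugation sends this back to $dz_i$. Hence $\overline{(d\tau)^*(\sum p_i dz_i)} = \sum \bar p_i\, dz_i$, and in these coordinates
\[
\ttau(z,p) = (\bar z, \bar p).
\]
This formula gives three facts at once:
\begin{itemize}
\item $\ttau$ is an antiholomorphic involution. This is a local statement, and the formula is coordinate-free in meaning, so it holds globally; one can also see involutivity directly from $\tau^2=\id$ and functoriality of $(d\tau)^*$.
\item Near $\pi^{-1}(M_\R)$, the fixed-point set of $\ttau$ is $\{z\in\R^n,\ p\in\R^n\}$. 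This is exactly $\rcotbun$, viewed as real covectors $\sum p_i\,dx_i$ with $x_i=\operatorname{Re} z_i$ on $M_\R$.
\item A fixed point of $\ttau$ must satisfy $\tau(x)=x$, so the fixed-point set lies over $M_\R$. This shows $\mathrm{Fix}(\ttau)=\rcotbun$ globally, not just near $\pi^{-1}(M_\R)$.
\end{itemize}

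\textbf{The embedding $\jmath$.} Take $\jmath$ to be the inclusion $\rcotbun \hookrightarrow \cotbun|_{M_\R}\subset\cotbun$. It is a proper embedding: it is closed as a fixed-point set, and it is a totally real submanifold of real dimension $2n$, which is half the real dimension of $\cotbun$. This gives condition (i).

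\textbf{The symplectic forms.} In these coordinates $\omega=\sum dz_i\wedge dp_i$. Restricting to $z=x$, $p\in\R^n$ gives $\sum dx_i\wedge dp_i$, which is the canonical form $\omega_\R$ on $\rcotbun$; this is condition (ii). For condition (iii), $\ttau^*\omega = \sum d\bar z_i\wedge d\bar p_i=\overline{\omega}$. Alternatively, both (ii) and (iii) follow invariantly from $\ttau^*\alpha=\bar\alpha$ and $\jmath^*\alpha=\alpha_\R$ for the tautological forms, since $\pi\circ\ttau=\tau\circ\pi$.

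\textbf{Main obstacle.} The only delicate point is the sign and conjugation bookkeeping showing that $\ttau$ is well-defined and antiholomorphic on the holomorphic cotangent bundle. Concretely, one must check that $\overline{(d\tau)^*}$ maps $(1,0)$-forms at $\tau(x)$ to $(1,0)$-forms at $x$. I would settle this with the coordinate computation $\tau^*dz_i=d\bar z_i$ above, together with the observation that coordinates of this type cover a neighbourhood of $M_\R$. Away from $M_\R$, I would argue via arbitrary holomorphic charts $w$ and $\overline{w\circ\tau}$, which are holomorphic coordinates near $\tau(x)$.
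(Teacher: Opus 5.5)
Your proposal is correct, but your main line differs from the paper's. The paper's proof checks only (ii) and (iii), and it does so invariantly. From $\pi\circ\ttau=\tau\circ\pi$ and the commutative square with $\jmath$ it gets $\jmath^*\alpha=\alpha_\R$. A one-line computation using $\tau^2=\id$ gives $\ttau^*\alpha=\overline{\alpha}$, and applying $-d$ yields both conditions on all of $\cotbun$ with no charts. The facts that $\ttau$ is an antiholomorphic involution with $\mathrm{Fix}(\ttau)=\rcotbun$, which give condition (i), are not proved inside the lemma. They are taken from the discussion preceding it, which relies on Andreotti--Holm coordinates.

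Your route through Andreotti--Holm coordinates, where $\ttau(z,p)=(\bar z,\bar p)$, explicitly verifies those preliminary facts and (i), which the paper leaves implicit. The invariant route has the advantage of being global for free.

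One caveat on your version. Your coordinate computation of $\ttau^*\omega=\overline{\omega}$ only holds on a neighbourhood of $\pi^{-1}(M_\R)$. The identity theorem cannot carry it to components of $M$ that do not meet $M_\R$. So for (iii) you genuinely need one of two things:
\begin{itemize}
\item your ``alternatively'' argument, which is exactly the paper's proof; or
\item the paired charts $w$ and $\overline{w\circ\tau}$ from your last paragraph, in which $\ttau$ again reads $(w,p)\mapsto(\bar w,\bar p)$, so the same computation applies everywhere.
\end{itemize}
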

\begin{proof}
    Let $\pi \colon \cotbun \to M$ be the holomorphic cotangent bundle. For $(x,p) \in \cotbun$, $v \in T_{(x,p)}(\cotbun)$, the tautological 1-form $\alpha$ is defined by $\alpha(v) = p ( d \pi_{(x,p)} (v) )$. 

    Similarly, for the real cotangent bundle $\pi_\R \colon \rcotbun \to M_\R$, the tautological 1-form $\alpha_\R$ is given by $\alpha_\R (v) = p ( d \pi_{\R, (x,p)} (v))$ for $(x,p) \in \rcotbun, v \in T_{(x,p)}^\R(\rcotbun)$. 
    
    Since $\pi \circ \ttau = \tau \circ \pi$, the following diagram commutes
    \[
    \begin{CD}
	   \rcotbun @>\jmath>>	 \cotbun \\
	   @V\pi_\R VV 			 @VV \pi V\\
	   M_\R	@>i>>	M
    \end{CD}
    \]
    Thus $\jmath^* \pi = \pi_\R$ and $\jmath^* \alpha = \alpha_\R$. Since $\omega= -d \alpha$ and $\omega_\R= - d \alpha_\R$, we have $\jmath^* \omega = \omega_\R$. 

    For $v \in T_{(x,p)}(\cotbun)$
    \begin{align*}
        \ttau^* \alpha (v) &= \alpha (d \ttau (v) ) = \overline{ (d \tau)^* p (d \pi \circ d \ttau (v)) } = \overline{ (d \tau)^* p (d \tau \circ d \pi (v)) } \\
        &= \overline{ p(d \pi (v))} = \overline{ \alpha(v)}. 
    \end{align*}
    It follows that $\ttau^* \omega = \overline{ \omega}$. 
\end{proof}

\begin{definition}
    Let $X$ be a complex manifold with an antiholomorphic involution $\sigma \colon X \to X$.  
    A holomorphic vector field $V$ on $X$ is called $\sigma$-\textbf{compatible} 
    if $\sigma_* (V) = \overline{V}$. 
    A holomorphic function $f \in \holo(X)$ is called $\sigma$-\textbf{compatible}  if $\sigma^\ast f= \overline{f}$ and likewise, a holomorphic differential form $\alpha$ is $\sigma$-\textbf{compatible} if $\sigma^\ast \alpha = \overline{\alpha}$. 
\end{definition}

A $\sigma$-compatible function is real-valued on $X_\R = \mathrm{Fix}( \sigma)$, while the $\R$-flow of a $\sigma$-compatible vector field preserves the totally real submanifold $X_\R$, see Huang \cite{CaloCarleman}*{Lemma 2.2}.

 \begin{definition} \label{def-tangential-dp}
    Let $X$ be a complex manifold endowed with a holomorphic symplectic form $\omega$ and an antiholomorphic involution $\tau \colon X \to X$. We say that $(X, \omega, \sigma)$ admits the $\sigma$-\textbf{Hamiltonian density property} ($\sigma$-\textbf{symplectic density property}), if the complex Lie algebra generated by $\C$-complete $\sigma$-compatible holomorphic Hamiltonian (resp.\@ symplectic) vector fields on $X$ is dense in the Lie algebra of holomorphic Hamiltonian (resp.\@ symplectic) vector fields on $X$, in the compact-open topology. 

    We denote by $\saut_{\omega} (X)$ the group of $\sigma$-equivariant, holomorphic symplectic automorphisms of $X$:
    \begin{align*}
        \saut_{\omega} (X) = \{\, \Phi \in \aut_{\omega}(X) : \Phi \circ \sigma = \sigma \circ \Phi \,\}.
    \end{align*}
    If $(X, \omega, \sigma)$ is a symplectic complexification of $(X_\R, \omega_\R)$, then
    \[
        \saut_\omega (X) = \{\, \Phi \in \aut_{\omega}(X) : \Phi (X_\R) = X_\R \,\}.
    \]
\end{definition}

\begin{remark} \label{replace-Lie-by-Sum}
    Since $[V, W] = \lim_{t \to 0} (\varphi_t^* W - W)/t$, where $\varphi_t$ denotes the time-$t$ flow of $V$, $(\varphi_t^* W - W)/t$ for small $t$ approximates $[V,W]$ uniformly on compact subsets of $X$. 
    Moreover the holomorphic functions modulo locally constant functions $\holo(X)/\C$ with the Poisson bracket is (anti-)isomorphic to the Lie algebra of Hamiltonian vector fields on $X$. Thus the $\sigma$-Hamiltonian density property of $X$ is equivalent to the fact that the complex vector space generated by real holomorphic Hamiltonian functions inducing complete Hamiltonian vector fields is dense in $\holo(X)/\C$ in the compact-open topology. 
\end{remark}

\begin{definition}
    Let $M$ be a smooth affine algebraic variety and $\tau \colon M \to M$ an antiregular involution. 
    We say that $M$ is $\tau$-\textbf{flexible}, if $\tau$-compatible LNDs on $M$ span the tangent space $T_q M$ at every point $q \in M$. 
\end{definition}

Note that $\tau$-compatibility is preserved under automorphisms in $\taut_{alg}(M)$.
\begin{lemma} \label{tauFlex-Properties}
    Let $M$ be a smooth affine algebraic variety of dimension $\ge 2$ and $\tau \colon M \to M$ an antiregular involution. If $M$ is $\tau$-flexible, then 
    \begin{enumerate}
        \item  the group $\taut_{alg}(M)$ of equivariant algebraic automorphisms acts transitively on each connected component of $M_\R$;
        \item  there are finitely many $\tau$-compatible LNDs which span the tangent space $T_q M$ at every point $q \in M$.
    \end{enumerate}
\end{lemma}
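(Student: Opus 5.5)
Both parts follow the standard flexibility arguments of Arzhantsev--Flenner--Kaliman--Kutzschebauch--Zaidenberg, adapted to keep track of $\tau$-compatibility.

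The basic observation is this: if $V$ is a $\tau$-compatible LND with flow $\varphi_t$, then $\tau\circ\varphi_t\circ\tau = \varphi_{\bar t}$, because $\tau_*V=\overline V$. So for real $t$ the map $\varphi_t$ is algebraic, commutes with $\tau$, and lies in $\taut_{alg}(M)$. Moreover $V$ is tangent to $M_\R$ along $M_\R$, and $\mathrm{Re}\,V$ restricts to a real vector field on $M_\R$ whose real flow is $\varphi_t|_{M_\R}$. If $V$ is $\tau$-compatible, so is $fV$ for any $\tau$-compatible $f\in\ker V$, and $fV$ is again an LND. Pullbacks $\psi^*V$ by $\psi\in\taut_{alg}(M)$ are also $\tau$-compatible LNDs.

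\textbf{Part (1).} Fix a point $q\in M_\R$. Since $T_qM = T_qM_\R\otimes\C$ and each $V(q)$ lies in $T_qM_\R$ for $\tau$-compatible $V$, the hypothesis that these vectors span $T_qM$ over $\C$ implies they span $T_qM_\R$ over $\R$. Pick $n$ such fields $V_1,\dots,V_n$ with $V_i(q)$ a real basis. The map
\[
(t_1,\dots,t_n)\mapsto \varphi^{V_1}_{t_1}\circ\cdots\circ\varphi^{V_n}_{t_n}(q), \qquad t_i\in\R,
\]
is a submersion at $0$, so the $\taut_{alg}(M)$-orbit of $q$ contains a neighbourhood of $q$ in $M_\R$. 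Hence every orbit in $M_\R$ is open. Since the orbits partition $M_\R$, they are also closed, so each orbit is a connected component. This part is routine.

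\textbf{Part (2).} I follow the AFKKZ scheme.

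First, pick $\tau$-compatible LNDs $W_1,\dots,W_n$ spanning $T_{q_0}M$ at one point $q_0$. Let $A$ be the proper Zariski closed set where they fail to span.

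Next, I use $\taut_{alg}(M)$ to move points of $A$ into $M\setminus A$. For each irreducible component $A_j$ of $A$, I want some $\psi\in\taut_{alg}(M)$ with $\psi(A_j)\not\subset A$. Adding the pulled-back fields $\psi^*W_i$ then shrinks the bad locus to lower dimension, and noetherian induction on dimension finishes with finitely many fields.

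To find $\psi$, I exploit that $\tau$-compatible LNDs span $T_qM$ at every point of $A_j$. Hence some $\tau$-compatible LND $V$ is not tangent to $A_j$ at a generic point. Then $\varphi^V_t(A_j)\not\subset A_j$ for generic complex $t$, and in fact for generic real $t$ as well, because the condition $\varphi^V_t(A_j)\subset A$ is Zariski closed in $t$ (it is polynomial in $t$, since $V$ is locally nilpotent), and a nonzero polynomial cannot vanish on all of $\R$.

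A refinement handles the case where $\varphi^V_t(A_j)$ moves into another component of $A$. The condition must be "$\varphi^V_t(A_j)\subset A$ for all $t$", which would force $A$ to contain the $V$-orbits through $A_j$. I handle this by choosing generic $\R$-linear combinations, or products of flows of several fields. Concretely, I take $\psi=\varphi^{V_1}_{t_1}\circ\cdots\circ\varphi^{V_m}_{t_m}$ and argue that the set of real parameters $(t_1,\dots,t_m)$ with $\psi(A_j)\subset A$ is the real points of a proper Zariski closed subset. Real points are Zariski dense in affine space $\C^m$, which gives a real parameter outside it.

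\textbf{Main obstacle.} The key step is the properness of that Zariski closed set. It relies on the orbit of $A_j$ under the group generated by these flows not staying inside $A$, which follows from transitivity of $\aut_{alg}(M)$ on $M$ (flexibility with $\dim\ge2$, from AFKKZ) restricted to the subgroup generated by the $\tau$-compatible LNDs. That subgroup is itself transitive on $M$ by the same AFKKZ argument applied to this saturated family: it is closed under conjugation by its own elements and by $\tau$-compatible replicas, and spans $TM$ everywhere.

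Finally, every element of the subgroup is a composition of flows with complex times. By the polynomial-in-$t$ observation, "not inside $A$" holds for Zariski generic complex times, hence also for some real times. This gives $\psi\in\taut_{alg}(M)$ and completes the induction.
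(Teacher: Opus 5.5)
Your proof is correct. Part (1) is the paper's argument. Part (2) takes a different and heavier route than the paper's.

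\textbf{Part (1).} The real-time flows of $n$ $\tau$-compatible LNDs whose values at $q$ form a real basis of $T_qM_\R$ give open orbits, hence closed ones. Strictly, an open-and-closed orbit is a union of connected components, not necessarily a single one; that is exactly what transitivity on each component requires.

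\textbf{Part (2).} The paper uses that $\tau$-flexibility gives spanning at \emph{every} point. It picks a point $q'$ in each irreducible component of the bad locus $A$ of $V_1,\dots,V_n$. It then adds $\tau$-compatible LNDs $V_{n+1},\dots,V_{2n}$ spanning $T_{q'}M$. The new bad locus is a closed subset of $A$ missing $q'$, so Noetherian induction terminates. No automorphisms, transitivity, or real-versus-complex times are involved.

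You instead keep the original $W_i$ and add pullbacks $\psi^*W_i$ by $\psi\in\taut_{alg}(M)$ chosen to move each $A_j$ off $A$. This needs an orbit argument for the group $G$ generated by complex-time flows, and then Zariski density of $\R^m$ in $\C^m$ to pass to real times. That chain is valid.

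Your stated reason for transitivity of $G$ is inaccurate, though. Since $\tau\circ\varphi^V_t\circ\tau=\varphi^V_{\bar t}$, the pushforward $(\varphi^V_t)_*W$ of a $\tau$-compatible $W$ is in general not $\tau$-compatible for non-real $t$. So the family of $\tau$-compatible LNDs is not closed under conjugation by $G$, and calling it saturated is wrong. You do not need the AFKKZ transitivity result here at all. Suppose $G\cdot a\subset A$. The orbit is constructible (the image of some $\C^s$), so it contains a smooth point $b$ of $\overline{G\cdot a}$. The tangent space there contains every $V(b)$ for $\tau$-compatible $V$, so $\dim\overline{G\cdot a}=n>\dim A$, a contradiction.

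What your route buys is the moving mechanism itself. The paper genuinely needs that mechanism in the Stein analogue, Lemma~\ref{tauHolFlex-Properties}. There the bad set may have countably many components, and one equivariant automorphism has to handle all of them at once. In the present algebraic statement, the direct argument is shorter and uses only the pointwise hypothesis.
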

\begin{proof}
    (1) Let $q \in M_\R$. By assumption there are $n$ $\tau$-compatible LNDs $V_1, \dots, V_n$ span the tangent space $T_q M_\R$ at $q$. By the inverse function theorem, the map
    \[
        \R^n \to M_\R, \quad (t_1, \dots, t_n) \mapsto \phi_{t_n}^n \circ \cdots \circ \phi_{t_1}^1,
    \]
    where $\phi_{t_i}^i$ is the flow map of $V_i$, is a local diffeomorphism on a neighborhood around $0 \in \R^n$. This shows that the $\taut_{alg}(M)$-orbit through $q$ is open in $M_\R$. Since $M_\R$ is foliated by $\taut_{alg}(M)$-orbits, the orbit through $q$ is also closed. The transitivity of $\taut_{alg}(M)$ on each connected component of $M_\R$ follows. 
    \smallskip

    (2)  Since linear independence of vector fields is an open condition, the set where $V_1, \dots, V_n$ do not span the tangent space is a proper algebraic subvariety $A$ of $M$. Let $q' \in A$ and $V_{n+1}, \dots, V_{2n}$ be $\tau$-compatible LNDs spanning $T_{q'} M$. Then, $V_1, \dots, V_{2n}$ span the tangent spaces outside a subvariety of $A$. An induction on the dimension of sets where the existing LNDs do not span shows the claim.
\end{proof}

\begin{proposition} \label{alg-tauFlex-to-SDP}
    Let $M$ be a smooth affine algebraic variety and $\tau \colon M \to M$ an antiregular involution. If $M$ is $\tau$-flexible, then its algebraic cotangent bundle $T^* M$ has the $\tilde{\tau}$-Hamiltonian density property and the $\tilde{\tau}$-symplectic density property. 
\end{proposition}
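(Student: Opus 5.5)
We follow the proof of Theorem \ref{thm: algHDP} from Section \ref{sec: KK-Method} and check that every step can be done with $\ttau$-compatible data. By Remark \ref{replace-Lie-by-Sum}, since the Poisson bracket of $\ttau$-compatible functions is again $\ttau$-compatible up to a factor of $i$ (only complex spans are needed anyway), it suffices to show the following: the complex Lie algebra $\lie^{\ttau}_X$ generated by $\ttau$-compatible functions with complete Hamiltonian fields contains $\C[M][h_{V_1},\dots,h_{V_N}]$. Here we take $V_1,\dots,V_N$ to be the finitely many $\tau$-compatible LNDs spanning $TM$ provided by Lemma \ref{tauFlex-Properties}(2). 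The symplectic case is then reduced to the Hamiltonian one as in Lemma \ref{SDP-to-HDP}. We represent a class in $H^1_{DR}(M)$ by a closed form $\alpha$, and replace $\alpha$ by $(\alpha+\overline{\tau^*\alpha})/2$ and $(\alpha-\overline{\tau^*\alpha})/2i$. Both pieces are $\tau$-compatible, and their vertical lifts are complete and $\ttau$-compatible.

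The basic compatibility facts come first. If $f\in\C[M]$ is $\tau$-compatible, then $\pi^*f$ is $\ttau$-compatible. If $V$ is $\tau$-compatible, then $h_V$ is $\ttau$-compatible, since $h_V(\ttau(x,p))=\overline{p}(d\tau\, V(\tau x))$ and $d\tau\,V\circ\tau=\overline V$. Every $f\in\C[M]$ decomposes as $f=f_1+if_2$ with $f_1,f_2$ $\tau$-compatible, where $f_1=(f+\overline{\tau^* f})/2$. Hence complex spans of compatible generators give all functions.

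Next we redo Lemma \ref{mix-field-function}. For $\tau$-compatible $f$ and compatible complete $V$, the function $(\lambda\pi^*f+h_V)^{k+1}$ with real $\lambda$ is $\ttau$-compatible and complete. The Vandermonde argument only needs distinct real $\lambda_i$, so $(\pi^*f)h_V^k\in\lie^{\ttau}_X$. By complex linearity this extends to all $f\in\C[M]$, which gives $h_V\in\lie^{\ttau}_X$ for every algebraic vector field $V$ via \eqref{h_V-in-LieX}. The construction of $\mathcal{E}_k$ in Lemma \ref{Stalk-at-pt} then goes through verbatim. We use only the $\tau$-compatible LNDs $V$ among the generators $fh_V^j$, and Lie derivatives $L_W=\{h_W,\cdot\}$ by arbitrary $W$, which are allowed since $h_W\in\lie^{\ttau}_X$. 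The $\mathfrak{gl}(T_qM)$-stability argument and the irreducibility of $\sym^j$ are unchanged, because the compatible $V_i(q)$ still span $T_qM$.

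The main point to check is Lemma \ref{stalks-to-sheaf}, which used transitivity of $\aut(M)$ on $M$. Transitivity is not available in this form here, but we can avoid it in the algebraic setting. Pulling back by the flow $\phi_t$ of an LND $V$ is the algebraic map $\exp(tL_V)$ on $\mathcal{F}_k$, and it preserves $\mathcal{E}_k$ because $\mathcal{E}_k$ is $L_V$-stable. So $\phi_t^*s\in\mathcal{E}_k$ for all complex $t$, and the group generated by flows of the compatible LNDs acts transitively on $M$ by the usual flexibility argument; this does not need compatibility of the automorphism, only membership of the pulled-back sections. In fact, since we only need $\mathcal{E}_k$ stable under these flows, we may invoke the flexibility of $M$ directly. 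Noetherian induction on the degeneracy locus then yields finitely many spanning sections. The global argument with Theorem B, or in the affine case simply generation of a coherent module by global sections, shows $\Gamma(\mathcal{F}_k)=\Gamma(\mathcal{E}_k)\subset\lie^{\ttau}_X$ for each $k$. This proves both the $\ttau$-Hamiltonian and the $\ttau$-symplectic density property.
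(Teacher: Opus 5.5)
Your proof is correct. Except at one step it follows the paper's argument: the same reduction of the symplectic case, the same splitting $f=f_1+if_2$, the real-$\lambda$ Vandermonde version of Lemma~\ref{mix-field-function}, and the same module $\mathcal{E}_k$ generated by the $f h_V^j$ with $V$ a $\tau$-compatible LND and closed under all $L_W=\{h_W,\cdot\}$.

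The difference is in the analogue of Lemma~\ref{stalks-to-sheaf}. The paper moves a point $q'\in B$ off the degeneracy locus by a $\tau$-equivariant automorphism. It builds this from real-time flows of $\tau$-compatible LNDs spanning $T_{q'}M$, and then uses that the generating set of $\mathcal{E}_k^{\ttau}$ is invariant under pullback by $\taut_{alg}(M)$. You instead observe that, for any LND $V$, pullback by its complex-time flow acts on $\Gamma(M,\mathcal{F}_k)$ as the finite sum $\exp(tL_V)$. It therefore preserves $\mathcal{E}_k$ simply because $\mathcal{E}_k$ is $L_V$-stable, so no equivariance is needed and plain flexibility suffices. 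You should state why the sum is finite. The cotangent lift $(q,p)\mapsto(\phi_t(q),(\phi_{-t})^*p)$ of an algebraic $\mathbb{G}_a$-action is again an algebraic $\mathbb{G}_a$-action, so $L_V$ is locally nilpotent on $\C[T^*M]$.

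The trade-off is that your device is intrinsically algebraic. For complete holomorphic fields $\exp(tL_V)$ is an infinite series and $\mathcal{E}_k$ is not closed, so the argument does not transfer to Proposition~\ref{holo-tauFlex-to-SDP}; the paper's equivariant-automorphism argument does transfer. In the algebraic case you could even drop automorphisms altogether. Lemma~\ref{Stalk-at-pt} already shows that $\Gamma(M,\mathcal{E}_k)$ surjects onto every fiber $\mathcal{F}_k(q)$, so Nakayama's lemma over the Noetherian ring $\C[M]$ gives $\Gamma(M,\mathcal{E}_k)=\Gamma(M,\mathcal{F}_k)$ directly.

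One harmless inaccuracy: since $\ttau^*\omega=\overline{\omega}$, the Poisson bracket of two $\ttau$-compatible functions is itself $\ttau$-compatible, with no factor of $i$.
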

\begin{proof}
    The proof is an equivariant version of Section \ref{sec: KK-Method}. We walk through the same steps using similar notations and explain the necessary adjustments.  
    \smallskip

    In dimension 1, it is straightforward to verify (using Lemma \ref{lem: C[h_V]} and Lemma \ref{lem: fgIHF}) that real polynomials in $IHF_{T^* \C}$ generate $\C[T^* \C]$ as complex Lie algebra. We may assume $\dim M \ge 2$.

    As in Lemma \ref{SDP-to-HDP}, we reduce the symplectic case to a Hamiltonian one.  
    For a symplectic vector field $V$ on $\cotbun$, there is a closed 1-form $\alpha$ on $M$ representing $[i_V \omega]$ in $H^1_{DR}(M) \cong H^1_{DR}(T^*M)$. Since
    \begin{align*}
        \alpha = \frac{\alpha + \overline{\tau^* \alpha}}{2} + i \frac{\alpha - \overline{\tau^* \alpha}}{2i},
    \end{align*}
    the vertical lifts $V_1, V_2$ of the $\tau$-compatible closed 1-forms $(\alpha + \overline{\tau^* \alpha})/2$ and $(\alpha - \overline{\tau^* \alpha})/(2i)$ respectively, are complete $\ttau$-compatible symplectic vector fields. Thus, the difference $V - V_1 - i V_2$ is Hamiltonian.
    \smallskip

    Since $\ttau^* \omega = \overline{\omega}$ by Lemma \ref{T^*M-SympCompl}, $\ttau$-compatible functions on $T^*M$ induce $\ttau$-compatible Hamiltonian vector fields. 
    
    By Lemma \ref{tauFlex-Properties} (3), there are $\tau$-compatible LNDs $V_1, V_2, \dots, V_N$ spanning the tangent bundle $TM$. For the algebraic Hamiltonian density property of $X=T^*M$, we need to show that  
    \begin{align}  \label{HDP-tau}
        O(M) [h_{V_1},h_{V_2},\dots, h_{V_N}] \subset \lie_X^{\ttau},
    \end{align}
    where $\lie_X^{\ttau}$ denotes the complex Lie algebra generated by $\ttau$-compatible functions in $IHF_X$ whose Hamiltonian vector fields are complete. 
    \smallskip

    Since $\tau$-compatible functions induce $\tau$-compatible 1-forms, which in turn has complete $\ttau$-compatible vertical lift, we have $O^\tau(M) \subset \lie_X^{\ttau}$. Moreover, every function $f \in O(M)$ can be written as 
    \begin{align*}
        f = \frac{f + \overline{\tau^* f}}{2} + i \frac{f - \overline{\tau^* f}}{2i}, 
    \end{align*}
    where the first summand is $\tau$-compatible and the second summand is $i$ times a $\tau$-compatible function. 
    Hence, $O(M) = O^\tau(M) + i \, O^\tau(M) \subset \lie_X^{\ttau}$.
    \smallskip

    For a $\tau$-compatible LND $V$ on $M$, we have $\C[M][h_V^k] \subset \lie_X^{\ttau}$ for any $k \in \N$ by Lemma \ref{mix-field-function}.

    Next, the proof of Lemma \ref{Stalk-at-pt} constructs out of 
    \[
        \{ \, f h_{V}^j: \, f \in \C[M],  \, V \in \vect_{\mathrm{alg}}(M) \,\, \tau\text{-compatible LND}, j \le k \,\}
    \]
    by iterative application of Lie derivatives a coherent $O_M$-submodule $\mathcal{E}_k^{\ttau}$ with the property $(\mathcal{E}_k^{\ttau})_q = (\mathcal{F}_k)_q$. 
    \smallskip

    Now, start with finitely many $\tau$-compatible sections $s_i$ spanning $\oplus_{j \le k} \sym^j(T_qM)$ at a point $q$. The set of points where they do not span is a subvariety $B$ of lower dimension in $M$. Let $q' \in B$. By the $\tau$-flexibility of $M$, there are $\tau$-compatible LNDs $W_1, \dots, W_n$ spanning the tangent space at $q'$. The real flow maps of these $n$ $\tau$-compatible LNDs form local coordinates for a totally real submanifold of maximal dimension around $q'$. Since the $n$-dimensional real span of $W_{1}, \dots, W_{n}$ cannot be contained in $T_{q'} B$, there is an equivariant automorphism $\alpha \in \taut_{alg}(M)$ such that $\alpha(q') \notin B$. Then by adding $\alpha^* s_i$ to $\{ s_i \}$, we reduce the dimension of the subvariety where the sections do not span. Since $\alpha$ is in $\taut_{alg}(M)$, the generating set of $\mathcal{E}_k^{\tilde{\tau}}$ is stable under pullback of $\alpha$. Proceed as in Lemma \ref{stalks-to-sheaf} to obtain a finite set of sections spanning $\oplus_{j \le k} \sym^j TM$. 
    \smallskip

    Finally, a standard application of Theorem B implies that the global sections of $\mathcal{E}_k^{\ttau}$, $\mathcal{F}_k$ are identical and thus all global sections of $\mathcal{F}_k$ are in $\lie_X^{\ttau}$. Since this holds for arbitrary $k \in \N$, we obtain \eqref{HDP-tau} and the $\ttau$-Hamiltonian density property of $\cotbun$.
\end{proof}

\begin{definition} \label{tau-hol-flex}
    Let $M$ be a Stein manifold and $\tau \colon M \to M$ an antiholomorphic involution. 
    We say that $M$ is $\tau$-\textbf{holomorphically} \textbf{flexible}, if $\tau$-compatible complete holomorphic vector fields on $M$ span the tangent space $T_q M$ at every point $q \in M$.
\end{definition}

\begin{lemma} \label{tauHolFlex-Properties}
    Let $M$ be a Stein manifold of dimension $\ge 2$ and $\tau \colon M \to M$ an antiholomorphic involution. If $M$ is $\tau$-holomorphically flexible, then 
    \begin{enumerate}
        \item  the group $\taut(M)$ of equivariant holomorphic automorphisms acts transitively on each connected component of $M_\R$;
        \item  there are finitely many $\tau$-compatible complete holomorphic vector fields which span the tangent space $T_q M$ at every point $q \in M$.
    \end{enumerate}
\end{lemma}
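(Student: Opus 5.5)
The plan is to follow the proof of Lemma \ref{tauFlex-Properties} and replace its algebraic ingredients with their analytic counterparts. The flows of complete vector fields play the role of LND flows. Analytic subsets replace subvarieties. The Baire-type argument of Lemma \ref{stalks-to-sheaf} replaces the Noetherian induction, since an analytic set may have countably many irreducible components.

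For (1), fix $q \in M_\R$ and choose $\tau$-compatible complete holomorphic vector fields $V_1, \dots, V_n$ with $V_1(q), \dots, V_n(q)$ spanning $T_qM$. Since $V_i$ is $\tau$-compatible, $\sigma_*V_i=\overline{V_i}$, so $V_i$ is tangent to $M_\R$ along $M_\R$. Moreover its real-time flow $\phi^i_t$, $t\in\R$, commutes with $\tau$ and preserves $M_\R$ (Huang \cite{CaloCarleman}*{Lemma 2.2}). Each $\phi^i_t$ is therefore in $\taut(M)$. The vectors $V_i(q)$ lie in $T_qM_\R$, and they are $\R$-linearly independent because they are $\C$-independent. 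As $\dim_\R M_\R = n$, they span $T_qM_\R$. The map $(t_1,\dots,t_n)\mapsto \phi^n_{t_n}\circ\cdots\circ\phi^1_{t_1}(q)$ then has invertible differential at $0$, so the $\taut(M)$-orbit of $q$ is open in $M_\R$. All orbits are open, so each orbit is also closed, being the complement of the union of the others. Hence $\taut(M)$ acts transitively on each connected component of $M_\R$.

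For (2), start with $\tau$-compatible complete fields $V_1,\dots,V_n$ spanning $T_qM$ at some point $q$. The degeneracy locus $A=\{\,x : V_1\wedge\cdots\wedge V_n(x)=0\,\}$ is a proper analytic subset. For each irreducible component of $A$, pick a point of it and add $n$ $\tau$-compatible complete fields spanning the tangent space there. The new degeneracy locus meets every component of $A$ in a proper analytic subset, so its dimension drops. Repeating this at most $n$ times exhausts the dimension.

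The main obstacle is finiteness. $A$ may have countably many components, and the procedure above then adds countably many fields. I would handle this as in Lemma \ref{stalks-to-sheaf}, using automorphisms instead of adding new fields component by component. Given the current finite family $W_1,\dots,W_m$ with degeneracy set $B=\bigcup_i B_i$, I add the pushforwards $\phi_*W_j$ for a single $\phi\in\taut(M)$. Here $\phi$ is the time-one map of a composition of real flows of $\tau$-compatible complete fields, so $\phi$ commutes with $\tau$, and the pushforwards are again complete and $\tau$-compatible. The family $\{W_j,\phi_*W_j\}$ spans wherever $\phi^{-1}(x)\notin B$. We therefore need $\phi^{-1}(B_i)\not\subset B$ for every $i$. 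The $\phi$ that fail for a fixed $i$ form a closed nowhere dense set in the completion of the group generated by such real flows. By $\tau$-holomorphic flexibility, moving any point of $B_i$ slightly by real flows leaves $B$ generically, as in the proof of Proposition \ref{alg-tauFlex-to-SDP}. The Baire category argument of \cite{MR3229363}*{Lemma 25} then yields one $\phi$ good for all $i$ simultaneously. Each such step lowers $\dim B$ by at least one, so after finitely many steps $B=\emptyset$, which proves (2).
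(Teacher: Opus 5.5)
Your proposal is correct and follows the paper's proof. Part (1) is the same open-orbit argument via the inverse function theorem as in Lemma~\ref{tauFlex-Properties}, and part (2) is the same dimension-reduction induction, where one equivariant automorphism $\phi$ from the Baire category argument of \cite{MR3229363}*{Lemma 25} supplies the pushforwards $\phi_*W_j$ and handles all countably many components $B_i$ at once (your condition $\phi^{-1}(B_i)\not\subset B$ is the paper's $\phi(M\setminus B)\cap B_i\neq\emptyset$).
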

\begin{proof}
    The proof is similar to the affine case, Lemma \ref{tauFlex-Properties}. For (2), the analytic subset $B$ of $M$ where the existing finitely many $\tau$-compatible vector fields $V_1, V_2, \dots, V_N$ do not span can have countably many irreducible components $B_1, B_2, \dots$. There exists by Kutzschebauch \cite{MR3229363}*{Lemma 25} an equivariant automorphism $\phi$ such that $\phi(M \setminus B) \cap B_i \neq \emptyset$ for every $i = 1, 2, \dots$. Adding $\phi_* V_i$'s to the $V_i$'s, we reduce the dimension of the singular variety and may proceed with the induction.
\end{proof}

\begin{proposition} \label{holo-tauFlex-to-SDP}
    Let $M$ be a Stein manifold and $\tau \colon M \to M$ an antiholomorphic involution. If $M$ is a $\tau$-holomorphically,
    then its holomorphic cotangent bundle $T^* M$ has the $\tilde{\tau}$-Hamiltonian density property and the $\tilde{\tau}$-symplectic density property. 
\end{proposition}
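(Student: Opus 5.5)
The plan is to run the argument of Section \ref{sec: KK-Method} $\tau$-equivariantly, exactly as in the proof of Proposition \ref{alg-tauFlex-to-SDP}, replacing the algebraic ingredients by their Stein counterparts: Cartan's Theorems A and B, and Lemma \ref{tauHolFlex-Properties} in place of Lemma \ref{tauFlex-Properties}. Dimension one is handled by the known cases $\C$ and $\C^*$, where one checks directly with Lemmas \ref{lem: C[h_V]} and \ref{lem: fgIHF} that real functions in $IHF$ generate the required dense subalgebra. So we assume $\dim M\ge 2$. Write $X=\cotbun$ and let $\lie_X^{\ttau}$ be the complex Lie algebra generated by $\ttau$-compatible elements of $IHF_X$.

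\emph{Reduction to the Hamiltonian case.} Let $V$ be a symplectic field. Choose a closed holomorphic $1$-form $\alpha$ on $M$ representing $[i_V\omega]$, and split
\[
\alpha=\tfrac12(\alpha+\overline{\tau^*\alpha})+i\,\tfrac1{2i}(\alpha-\overline{\tau^*\alpha}).
\]
By Lemma \ref{lem: fctsonM}, the vertical lifts of the two $\tau$-compatible closed forms are complete and $\ttau$-compatible. Subtracting them leaves a Hamiltonian field, so it suffices to prove the $\ttau$-Hamiltonian density property.

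\emph{Generating low-degree pieces.} By Lemma \ref{tauHolFlex-Properties}(2), there are finitely many $\tau$-compatible complete fields $V_1,\dots,V_N$ spanning $TM$. By \eqref{dense-in-O(T*M)} and \eqref{deg-k-surjection}, it suffices to show $\holo(M)[h_{V_1},\dots,h_{V_N}]\subset\lie_X^{\ttau}$.

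First, $\holo(M)=\holo^\tau(M)+i\,\holo^\tau(M)$, and each $\pi^*f$ with $f$ $\tau$-compatible is $\ttau$-compatible with complete Hamiltonian field. Next, for a $\tau$-compatible complete $V$ and $f\in\holo^\tau(M)$, the function $\pi^*f+h_V$ is $\ttau$-compatible. Its powers lie in $IHF_X$ by Lemmas \ref{mix-field-function} and \ref{lem: C[h_V]}. For real distinct $\lambda_i$, the functions $(\lambda_i\pi^*f+h_V)^{k+1}$ are still $\ttau$-compatible, so the Vandermonde argument gives $\holo(M)h_V^k\subset\lie_X^{\ttau}$.

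With this, $h_W\in\lie_X^{\ttau}$ for every holomorphic vector field $W$, by \eqref{VF-sum}. Hence brackets with $h_W$, that is Lie derivatives $L_W$, preserve $\lie_X^{\ttau}$. The construction of Lemma \ref{Stalk-at-pt} then goes through verbatim, starting from $fh_V^j$ with $V$ $\tau$-compatible complete. It produces an $\holo_M$-submodule $\mathcal{E}_k^{\ttau}\subset\mathcal{F}_k$ whose global sections lie in $\lie_X^{\ttau}$. The $\mathfrak{gl}(T_qM)$-irreducibility of $\sym^j(T_qM)$, together with Nakayama's lemma, gives $(\mathcal{E}^{\ttau}_k)_q=(\mathcal{F}_k)_q$ at every $q$.

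\emph{Main obstacle: finitely many generating sections.} This is the equivariant, Stein analogue of Lemma \ref{stalks-to-sheaf}. We cannot use arbitrary automorphisms, because pullbacks must keep the generating set of $\mathcal{E}_k^{\ttau}$ invariant. Automorphisms in $\taut(M)$ do this, since they preserve $\tau$-compatibility of vectors fields and functions.

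Start with sections spanning the fibre at one point, and let $B$ be the analytic set where they fail to span. $B$ may have countably many irreducible components $B_1,B_2,\dots$. Apply the equivariant version of \cite{MR3229363}*{Lemma 25}, as already used in Lemma \ref{tauHolFlex-Properties}(2), to get $\phi\in\taut(M)$ with $\phi(M\setminus B)\cap B_i\ne\emptyset$ for all $i$. This requires that $\taut(M)$ moves points of each $B_i$ off $B_i$. That holds because the real flows of $n$ $\tau$-compatible complete fields spanning $T_{q'}M$ sweep out a totally real $n$-dimensional germ, which cannot lie in a proper analytic subset. Adding the pullbacks $\phi^*s_i$ drops the dimension of the bad set, and induction on dimension terminates after at most $n$ steps.

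Finally, Theorem B applied to the surjection $\holo_M^m\to\mathcal{F}_k$ shows that every global section of $\mathcal{F}_k$ lies in $\lie_X^{\ttau}$. Since $k$ is arbitrary, this gives density, using Remark \ref{replace-Lie-by-Sum} to pass between Lie combinations and the compact-open closure.
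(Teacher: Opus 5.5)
Your proposal is correct and is essentially the paper's proof. The paper proves this proposition by rerunning the $\tau$-equivariant version of Section \ref{sec: KK-Method} from Proposition \ref{alg-tauFlex-to-SDP}, with Cartan's Theorems A/B and the equivariant Baire-category lemma (Kutzschebauch's Lemma 25) replacing the algebraic tools. You do exactly this, and you even make explicit points the paper leaves tacit, namely taking the $\lambda_i$ real in the Vandermonde step and using $h_W\in\lie_X^{\ttau}$ for arbitrary holomorphic $W$ so that Lie derivatives along non-compatible fields stay inside $\lie_X^{\ttau}$. The only point to tighten is dimension one: Lemma \ref{lem: fgIHF} does not really help on $\C^*$, but your own Vandermonde step applied to a nowhere-vanishing $\tau$-compatible complete field (a suitable constant multiple of $\partial_z$ on $\C$, or of $z\partial_z$ on $\C^*$, which also covers $\tau(z)=\bar z^{-1}$) already gives $\holo(T^*M)_k=\holo(M)\,h_V^k\subset\lie_X^{\ttau}$ for every $k$.
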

\begin{proof}
    Replace ``algebraic" in the proof of Proposition \ref{alg-tauFlex-to-SDP} by ``holomorphic".
\end{proof}

\medskip 

Let $(X, \omega)$ be a smooth symplectic manifold. A {\bf symplectic isotopy} is a jointly smooth map $\phi \colon [0, 1] \times X \to X$ such that $\phi_t$ is symplectic for every $t$ in $[0,1]$ and $\phi_0 = \id$. The isotopy $\phi_t$ is generated by a smooth family of vector fields $V_t$ with $d \phi_t / dt= V_t \circ \phi_t, \, \phi_0 = \id$. Since $\phi_t$ is symplectic, the vector field $V_t$ is symplectic and the 1-form $i_{V_t} \omega$ is closed for $t \in [0,1]$. A {\bf Hamiltonian isotopy} is a symplectic isotopy such that the closed 1-form $i_{V_t} \omega$ is exact, namely $V_t$ is Hamiltonian for all $t \in [0,1]$. 
We call a symplectomorphism which is the endpoint of a Hamiltonian isotopy a \textbf{Hamiltonian diffeomorphism}. Hamiltonian diffeomorphisms form a normal subgroup of the group of symplectic diffeomorphisms, see Dusa McDuff and Dietmar Salamon \cite{MR3674984}*{\S 3.1}. 
\smallskip

The following is a general complexification theorem for Hamiltonian diffeomorphisms, from  which we will deduce the  main result of this paper, Theorem \ref{thm-Carleman} from the Introduction.
\begin{theorem} \label{Ham-Carleman}
    Let $(X_\R, \omega_\R)$ be a smooth symplectic manifold which admits a symplectic complexification $(X, \omega, \sigma)$. If $X$ is Stein and has the $\sigma$-Hamiltonian density property, then for every Hamiltonian diffeomorphism $\varphi \in \diff_{\omega_\R}(X_\R)$, for any $k \in \N$ and any positive continuous function $\epsilon$ on $X_\R$, there exists a holomorphic symplectic automorphism $\Phi \in \saut_{\omega}(X)$ such that
	\[	d_k( j^k \Phi (p) - j^k \varphi(p) ) < \epsilon (p) \quad \text{ for all }\, p \in X_\R.	\]
\end{theorem}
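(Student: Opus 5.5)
We follow the standard Andersén--Lempert scheme, adapted to the real, equivariant and Carleman setting, in the spirit of Deng--Wold and of \cite{CaloCarleman}.

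\textbf{Step 1 (reduction to time-dependent Hamiltonians).} Let $\varphi$ be the endpoint of a Hamiltonian isotopy $\varphi_t$ generated by smooth functions $H_t \in C^\infty(X_\R)$, $t\in[0,1]$. Fix an exhaustion of $X_\R$ by compact sets $K_1 \subset K_2 \subset \cdots$. Write $\varphi$ as a composition of many Hamiltonian diffeomorphisms, each $C^k$-close to the identity and generated by an almost autonomous Hamiltonian. Using the Trotter--Chernoff product formula, it suffices to approximate, on each compact piece, time-$1/m$ flows of single smooth Hamiltonians $H_{j/m}$.

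\textbf{Step 2 (holomorphic approximation of the generators and Lie-algebra step).} Since $X_\R$ is a totally real submanifold of maximal dimension in the Stein manifold $X$, Carleman approximation on totally real sets applies. It lets us approximate $H_t$ in the Whitney $C^{k+1}$-sense on $X_\R$ by $\sigma$-compatible holomorphic functions $F_t$, depending continuously on $t$; we symmetrize as $(F+\overline{\sigma^*F})/2$ so that $F_t$ is real on $X_\R$. The Hamiltonian vector fields $V_{F_t}$ are then $\sigma$-compatible and approximate $V_{H_t}$, so their real flows preserve $X_\R$.

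By the $\sigma$-Hamiltonian density property together with Remark~\ref{replace-Lie-by-Sum}, $F_t$ is approximated on a compact subset of $X$ (a neighbourhood of $K_j$) by a finite sum $\sum_i c_i f_i$. Here each $f_i \in IHF_X$ is $\sigma$-compatible, and the $c_i \in \C$ are complex coefficients. Because $F_t$ is itself $\sigma$-compatible, we may take real parts of the coefficients (replace the sum by its $\sigma$-symmetrization), making the $c_i$ real. Each $c_i f_i$ then has a complete $\sigma$-compatible flow whose real time flows lie in $\saut_\omega(X)$. By the Euler/Trotter formula, the composition of the time-$t/N$ flows of the $c_if_i$, iterated $N$ times, approximates the flow of $\sum c_i f_i$ on compacts. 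This yields automorphisms in $\saut_\omega(X)$ approximating each $\varphi|_{K_j}$ uniformly on a neighbourhood in $X$, hence in $C^k$ on $K_j$ by Cauchy estimates. One must also ensure the flow of $V_{F_t}$ stays inside the compact set for $t\in[0,1]$; this holds because $\varphi_t(K_j)$ is compact and $V_{F_t}$ is close to $V_{H_t}$ on a neighbourhood of it.

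\textbf{Step 3 (Carleman, i.e. Whitney-topology, upgrade).} This is the main obstacle, since the previous step gives only compact-open approximation. We plan an inductive push-out construction as in Deng--Wold and \cite{DHKW26}. Build $\Phi=\lim \Phi_j$ with $\Phi_j=\psi_j\circ\Phi_{j-1}$, where each correction $\psi_j \in \saut_\omega(X)$ satisfies two conditions: it is very close to the identity on a large holomorphically convex compact $L_{j-1}\supset \Phi_{j-1}(K_{j-1})$, and on $K_j\setminus K_{j-1}$ it corrects $\Phi_{j-1}$ towards $\varphi$ within $\epsilon$.

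To produce $\psi_j$ we need a \emph{Hamiltonian} isotopy on $X_\R$ that equals the identity near $L_{j-1}\cap X_\R$ and carries $\Phi_{j-1}$ close to $\varphi$ on the new shell. We would obtain it by cutting off the Hamiltonian generating $\varphi\circ\Phi_{j-1}^{-1}$ (which is close to the identity on the old compact) with a bump function. We then approximate this isotopy holomorphically on the union of $L_{j-1}$ (where the generator is approximately $0$) and a neighbourhood of the relevant compact in $X_\R$. This union is holomorphically convex because $L_{j-1}\cup(\text{compact totally real set})$ is polynomially convex in the Stein sense after enlarging. The approximation uses a version of Step 2 in which the Hamiltonian is simultaneously approximated by $\sigma$-compatible holomorphic functions near $0$ on $L_{j-1}$. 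The delicate points are the holomorphic convexity of these unions and choosing the tolerances $\epsilon_j$ summably small so that the limit converges to an automorphism and is $\sigma$-equivariant. Convergence to an automorphism uses the standard Forstneri\v c--Rosay criterion; equivariance holds because each $\psi_j$ commutes with $\sigma$. Finally, the derivatives up to order $k$ on each $K_j$ are controlled via Cauchy estimates.
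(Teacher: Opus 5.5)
The overall architecture matches the paper's. The local step (make the Hamiltonians $\sigma$-compatible, approximate via the $\sigma$-Hamiltonian density property and Remark~\ref{replace-Lie-by-Sum}, make coefficients real by $\sigma$-symmetrization, compose flows by a product formula) is Theorem~\ref{theorem: LocalCarleman}. It is followed by a push-out induction and a Forstneri\v{c}--Rosay limit.

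\textbf{Main gap: holomorphic convexity.} The problem is exactly where you say ``delicate''. The Carleman step must approximate, by one global $\sigma$-compatible holomorphic function, a Hamiltonian that is $0$ near a complex compact $L$ and arbitrary on a compact $K\subset X_\R$. This requires $L\cup K$ to be $\holo(X)$-convex, and your justification is not a valid principle. In $\C^2$ the arc $\{(e^{i\theta},0):0\le\theta\le\pi\}$ is polynomially convex and $[-1,1]\times\{0\}\subset\R^2$ is totally real. Yet their union is a Jordan curve in $\{w=0\}$ whose hull is a half-disc.

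Nor can you ``enlarge'' freely. $L$ must stay inside the region where the target isotopy is trivial, and $K$ must contain the real trajectories you want to correct.

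The missing idea is a specific configuration whose convexity is known:
\begin{enumerate}
\item embed $X$ in $\C^q$ with $X_\R=X\cap\R^q$ (Lemma~\ref{lem: Stein-in-CN}) and take $\rho=\|p-p_0\|^2$ with $p_0\in X_\R$;
\item use the Chirka--Smirnov theorem that a closed complex ball union a larger concentric closed real ball is polynomially convex, so $Z^\R_{r_1}\cup Z_{r_2}$ is $\holo(X)$-convex for $r_1>r_2$ (Lemma~\ref{Saturn-holoconv}).
\end{enumerate}
These sets are also $\sigma$-invariant, which your real-coefficient trick silently requires. The induction must then be organized around them. The correction must be $\approx\id$ on a complex ball $Z_b$. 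On a larger real ball $Z_a^\R$ it must approximate an isotopy that is exactly the identity on $Z_r^\R$, with $a>r>b$. An arbitrary ``large'' $L_{j-1}\supset\Phi_{j-1}(K_{j-1})$ together with an arbitrary ``relevant compact in $X_\R$'' will not do.

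\textbf{Secondary gap: inductive bookkeeping.} Knowing $\Phi_{j-1}\approx\varphi$ only on $K_{j-1}$ is not enough. Your cut-off must vanish near all of $L_{j-1}\cap X_\R$, so $\varphi\circ\Phi_{j-1}^{-1}$ must already be nearly trivial there. At the same time, for $\lim\Phi_j$ to be an automorphism of all of $X$, the sets $\Phi_j^{-1}(L_j)$ must exhaust $X$, not just $X_\R$.

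You also need to say which Hamiltonian isotopy you cut off. Being $C^k$-close to the identity is a property of the time-one map $\varphi\circ\Phi_{j-1}^{-1}$. The cut-off instead needs a generator that is small on the old region for all $t$, and you only get that by carrying isotopies along.

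The paper resolves both points with one device. It carries a Hamiltonian isotopy $\psi_{j,t}$ of $X_\R$ that is exactly the identity on $Z^\R_{s_j}$, with $\Phi_j(Z_{r_j})\subset Z_{s_j}$ and $\psi_{j,1}\circ\Phi_j\approx\varphi$ on all of $X_\R$ (conditions $(1_j)$--$(4_j)$). Theorem~\ref{theorem: LocalCarleman}, applied with $r=s_j$, approximates $\psi_{j,t}$ for all $t$. The push-out then enlarges the ball on which the remainder isotopy is trivial.

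Finally, the global Carleman approximation of $H_t$ on $X_\R$ in your Step 2 is unnecessary. Only approximation on sets $Z^\R_{a'}\cup Z_{b'}$ is ever used.
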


The proof of Theorem \ref{Ham-Carleman} goes by induction, where at each induction step we use the $\sigma$-Hamiltonian density property. We need an appropriate exhaustion of $X$ by $\sigma$-invariant, holomorphically convex compact subsets $S_j, j=1, 2,\dots$. The forthcoming considerations will be used to construct these subsets. 
\begin{lemma} \label{lem: Stein-in-CN}
    Every Stein manifold $X$, endowed with an antiholomorphic involution $\sigma \colon X \to X$, admits a holomorphic embedding $G \colon X \to \C^q$ for some $q \in \N$ such that $G(X_\R) = G(X) \cap \R^{q}$. 
\end{lemma}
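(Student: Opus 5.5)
We start from an arbitrary proper holomorphic embedding $F \colon X \to \C^m$, which exists by the Remmert--Bishop--Narasimhan embedding theorem since $X$ is Stein. The plan is to symmetrize $F$ with respect to $\sigma$. Define
\[
  G = (G_1, G_2) \colon X \to \C^m \times \C^m = \C^{2m}, \quad G_1 = \tfrac12\bigl(F + \overline{F\circ\sigma}\bigr), \quad G_2 = \tfrac{1}{2i}\bigl(F - \overline{F\circ\sigma}\bigr).
\]
Since $\sigma$ is antiholomorphic, $\overline{F\circ\sigma}$ is holomorphic, so $G$ is holomorphic. Both components are $\sigma$-compatible, i.e.\ $G\circ\sigma = \overline{G}$. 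Moreover $F = G_1 + iG_2$ is recovered from $G$ by a complex linear map. It follows that $G$ is an injective immersion, because $F$ is, and that $G$ is proper, because $|F| \le |G_1| + |G_2|$. Hence $G$ is a proper holomorphic embedding into $\C^q$ with $q = 2m$.

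The inclusion $G(X_\R) \subset G(X)\cap\R^q$ is immediate: for $x \in X_\R$ we have $G(x) = G(\sigma x) = \overline{G(x)}$, so $G(x)$ is real. For the reverse inclusion, let $x \in X$ with $G(x) \in \R^q$. Then $G(\sigma x) = \overline{G(x)} = G(x)$. Since $G$ is injective, $\sigma x = x$, that is $x \in X_\R$. This gives $G(X)\cap \R^q = G(X_\R)$.

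There is no serious obstacle. The only point to verify carefully is that injectivity and immersivity pass from $F$ to $G$, and this holds because $F$ factors through $G$ by a linear map. The equivariance $G\circ\sigma = \overline{G}$ corresponds to the conjugation involution on $\C^q$, which will be used afterwards to produce the $\sigma$-invariant holomorphically convex exhaustion.
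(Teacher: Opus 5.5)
Your proof is correct and follows the paper's construction exactly. The paper also symmetrizes a proper embedding $F$ into the $\sigma$-compatible components $f_k + \overline{f_k\circ\sigma}$ and $i(f_k - \overline{f_k\circ\sigma})$, which are real multiples of your $G_1, G_2$, and then asserts $G(X_\R) = G(X)\cap\R^{2N}$ by a ``simple calculation''. The paper leaves implicit that $G$ is a proper embedding and the argument for the reverse inclusion; you supply both, the first via the linear factorization $F = G_1 + iG_2$ and the second via injectivity of $G$.
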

\begin{proof}
    Since $X$ is Stein, there are $N \in \N$ and  holomorphic functions $f_i \colon X \to \C$ such that
\[
    F \colon X \hookrightarrow \C^N, x \mapsto (f_1(x), f_2(x), \dots, f_N(x))
\]
is a proper holomorphic embedding. 
Then, we can embed $X$ into $\C^{2N}$ as
\begin{align*}
    G \colon X \hookrightarrow \C^{2N}, x \mapsto (g_1(x), g_2(x), \dots, g_{2N}(x)),
\end{align*}
where
\begin{align*}
    g_{2k-1} = f_k + \overline{f_k \circ \sigma} \, \text{ and }  \, g_{2k} = i(f_k - \overline{f_k \circ \sigma} ), \, k = 1, 2, \dots, N, \, \, \text{ are real holomorphic.} 
\end{align*}
Since $G (\sigma(p)) = \overline{G(p)}$ for $p \in X$, the fixed-point set $X_\R$ is being embedded into $\R^{2N} \subseteq \C^{2N}$.  Moreover, a simple calculation shows that $G(X_\R) = G(X) \cap \R^{2N}$.
\end{proof}

From now on, we view $X$ as a subvariety of $\C^{q}$ and $\sigma$ as the restriction of complex conjugation of $\C^{q}$ to $X$.  

For $p_0$ in $X_\R$, the function
$\C^{q} \to \R_{\ge 0}, \, p \mapsto \| p - p_0 \|^2$ is $\sigma$-invariant and thus its restriction $\rho \colon X \to \R_{\ge 0}$ to $X$  is a $\sigma$-invariant strictly plurisubharmonic exhaustion function on $X$. 
We consider the closed sublevel set 
	\begin{align*} 
		Z_r = \{ p \in X: \rho(p) \le r \} \quad \text{ and } \quad	Z_r^\R = Z_r \cap X_\R	
	\end{align*}
the intersection of $Z_r$ with the fixed-point set. 

\begin{lemma} \label{Saturn-holoconv}
    The union $S:= Z_{r_1}^\R \cup Z_{r_2}$ is holomorphically convex in $X$ for $r_1, r_2 \in  \R_{\ge 0}$. 
\end{lemma}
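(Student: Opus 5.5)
We show directly that every point of $X\setminus S$ can be separated from $S$ by a holomorphic function on $X$. Since $X$ is a closed submanifold of $\C^q$ and $\rho$ is the restriction of $\|p-p_0\|^2$, it suffices to show that $S$ is polynomially convex in $\C^q$. Indeed, $\mathcal{O}(X)$-convexity of a compact subset of a closed Stein submanifold agrees with its $\mathcal{O}(\C^q)$-convexity, since holomorphic functions on $X$ extend to $\C^q$ by Cartan's Theorem B. Moreover, $Z_{r_2}=X\cap \overline{B}(p_0,\sqrt{r_2})$ and $Z_{r_1}^\R = X\cap \overline{B}(p_0,\sqrt{r_1})\cap\R^q$, using $X_\R=X\cap\R^q$ from Lemma \ref{lem: Stein-in-CN}. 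We may assume $r_1>r_2$; otherwise $S=Z_{r_2}$ is a sublevel set of a plurisubharmonic exhaustion and hence holomorphically convex.

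The key set is $K:=\bigl(\overline{B}(p_0,\sqrt{r_1})\cap\R^q\bigr)\cup \overline{B}(p_0,\sqrt{r_2})$ in $\C^q$, with $p_0\in\R^q$. We prove that $K$ is polynomially convex. Take $z\notin K$. If $\|z-p_0\|^2>r_1$, the convex hull $\overline{B}(p_0,\sqrt{r_1})$ contains $K$ and excludes $z$, so $z$ is not in the hull. Otherwise $z\notin\R^q$, and we use the holomorphic polynomial
\[
g(w)=\sum_j (w_j-p_{0,j})^2 .
\]
On the real ball, $g$ takes real values in $[0,r_1]$. On the small ball, $|g|\le r_2$. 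Write $x+iy=z-p_0$ with $y\neq 0$ and $\|x\|^2+\|y\|^2\le r_1$. Then $g(z)=\|x\|^2-\|y\|^2+2i\langle x,y\rangle$.

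The compact set $L=[0,r_1]\cup\{|\zeta|\le r_2\}\subset\C$ has connected complement, so it is polynomially convex in $\C$. If $g(z)\notin L$, a polynomial in $\C$ separating $g(z)$ from $L$, composed with $g$, separates $z$ from $K$.

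The remaining case, where $g(z)\in L$, is the main obstacle. Here $g(z)$ is either real in $[0,r_1]$, or of modulus at most $r_2$ while $\|z-p_0\|^2>r_2$.

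For real $g(z)$, we have $\langle x,y\rangle=0$. In that case I would use the standard fact that $\overline{B}\cap\R^q$ together with a ball is polynomially convex. This follows from the plurisubharmonic function
\[
u(w)=\|\mathrm{Im}\,w\|^2-\epsilon\bigl(\|w-p_0\|^2-r_2\bigr)^+ .
\]
Alternatively, I would use Kallin's lemma for unions of polynomially convex sets separated by $g$: one set is mapped into $[0,r_1]$, the other into a disc, and these overlap only in $[0,r_2]$. For the overlap, I would check the Kallin condition that $g^{-1}([0,r_2])\cap K$ is polynomially convex. This set is contained in the ball $\overline{B}(p_0,\sqrt{r_2})$ union a totally real compact, which is manageable.

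I expect this application of Kallin's lemma, in its refined form allowing a common fiber, to be the technical heart of the argument. A cleaner alternative I would try first is to construct the plurisubharmonic exhaustion
\[
\psi=\max\bigl(\|w-p_0\|^2-r_2,\;C\|\mathrm{Im}\,w\|^2+\|w-p_0\|^2-r_1\bigr),
\]
suitably smoothed. Its zero sublevel set equals $K$ whenever $C$ is large, after shrinking carefully near the sphere of radius $\sqrt{r_2}$. A sublevel set of a plurisubharmonic exhaustion of $\C^q$ is polynomially convex.

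Intersecting with $X$ preserves convexity, so $S=K\cap X$ is $\mathcal{O}(X)$-convex.
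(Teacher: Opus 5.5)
Your outer reduction is the paper's: write $S=K\cap X$ with $K=(\overline{B}(p_0,\sqrt{r_1})\cap\R^q)\cup\overline{B}(p_0,\sqrt{r_2})$, show $K$ is polynomially convex in $\C^q$, and pass to $\holo(X)$-convexity by Theorem B. The paper does not prove polynomial convexity of $K$; it quotes Theorem 2 of Chirka--Smirnov. Your attempt to prove it directly has a genuine gap at the case $g(z)\in L$, which you leave open. None of the tools you list closes it.

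\begin{itemize}
\item For real $g(z)$ you invoke the ``standard fact'' that a real ball together with a complex ball is polynomially convex. That is the statement being proved (it is the Chirka--Smirnov theorem), so without a citation this is circular.
\item The function $u=\|\mathrm{Im}\,w\|^2-\epsilon(\|w-p_0\|^2-r_2)^+$ is not plurisubharmonic. On the line $w=p_0+\zeta e_1$ it is $(\mathrm{Im}\,\zeta)^2-\epsilon(|\zeta|^2-r_2)^+$, whose radial derivative drops by $2\epsilon\sqrt{r_2}$ across $|\zeta|^2=r_2$, so the sub-mean-value property fails there. Even ignoring this, $\sup_K u=r_2$ (take $w-p_0$ purely imaginary of length $\sqrt{r_2}$). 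But $u(z)\le\|\mathrm{Im}\,z\|^2<r_2$ at hard-case points such as $z-p_0=(a,ib,0,\dots)$ with $b$ small and $r_2<a^2+b^2\le r_1$, so $u$ separates nothing.
\item For $\psi=\max(\cdot,\cdot)$, the set $\{\psi\le 0\}$ is the \emph{intersection} of the two sublevel sets, hence lies inside $\overline{B}(p_0,\sqrt{r_2})$. It misses the real points with $r_2<\|w-p_0\|^2\le r_1$ and cannot equal $K$. Replacing $\max$ by $\min$ destroys plurisubharmonicity.
\item Kallin's lemma in its usual form needs $\widehat{g(K_1)}\cap\widehat{g(K_2)}$ to be at most one point. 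Here it is the whole segment $[0,r_2]$. You neither state nor verify a version covering this, and polynomial convexity of $g^{-1}([0,r_2])\cap K$ is not the condition that is actually needed.
\end{itemize}

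The missing observation that makes your $g$-approach work is this: for real $w$ one has $g(w)=\|w-p_0\|^2$, so $g^{-1}(\overline{D}(0,r_2))\cap K=\overline{B}(p_0,\sqrt{r_2})$ exactly, and the overlap $[0,r_2]$ does no harm. You can then argue with representing measures.
\begin{itemize}
\item For $z\in\widehat{K}$, take a representing measure $\mu$ on $K$. Then $\nu=g_*\mu$ represents $g(z)\in L$ for $P(L)$.
\item Since $\C\setminus L$ is connected and the Dirichlet problem is solvable on the disc, Walsh's theorem gives that $\mathrm{Re}\,P(L)$ is dense in $C_\R(\partial L)$. Hence representing measures carried by $\partial L$ are unique.
\item Sweep the mass of $\nu$ lying in the open disc onto the circle with the Poisson kernel. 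This keeps the measure representing and does not change it on $(r_2,r_1]$.
\item If $g(z)\in(r_2,r_1]$, the swept measure is $\delta_{g(z)}$, which forces $\nu=\delta_{g(z)}$. Then $\mu$ lives on the real ball, and $z$ lies in it.
\item If $g(z)\in\overline{D}(0,r_2)$, the swept measure is the harmonic measure or Dirac mass of $g(z)$, so $\nu((r_2,r_1])=0$. Then $\mu$ lives on $\overline{B}(p_0,\sqrt{r_2})$, and $z$ lies there.
\end{itemize}
Either supply this argument or an equivalent one, or cite Chirka--Smirnov as the paper does.
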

\begin{proof}
    The only nontrivial case is $r_1 > r_2 > 0$. 
    By a result of Evgenii Chirka and Maxim Smirnov \cite{MR1155560}*{Theorem 2}, the union 
    \[
        \{ p \in \R^{q}: \| p - p_0 \|^2 \le r_1 \} \cup \{ p \in \C^{q}: \| p - p_0 \|^2 \le r_2 \}
    \]
    is polynomially convex in $\C^{q}$. Since $X$ is a complex subvariety of $\C^{q}$, its intersection $ Z_{r_1}^\R \cup Z_{r_2}$ with $X$ is holomorphically convex in $X$ by Cartan's Theorem B. 
\end{proof}

Here comes the local version of the complexification result Theorem \ref{Ham-Carleman},
an approximation of Anders{\'e}n-Lempert type for symplectic diffeomorphisms of $X_\R$ uniformly on the compact subsets $S$. 
\begin{theorem} \label{theorem: LocalCarleman}
	Let $(X_\R, \omega_\R)$ be a smooth symplectic manifold which admits a symplectic complexification $(X, \omega, \sigma)$. Let $\varphi \colon [0,1] \times X_\R \to X_\R$ be a Hamiltonian isotopy and $r \ge 0$ such that $\varphi_t \rvert_{Z_r^\R} = \id_{Z_r^\R}$ for all $t$ in $[0,1]$. If $X$ is Stein and has the $\sigma$-Hamiltonian density property, then for any $k \in \N$, any $\epsilon>0$, any $b \in (0,r)$ and any $a > b$, there exists a holomorphic Hamiltonian isotopy $\Phi \colon [0,1] \times X \to X$ such that $\Phi_{t} \in \saut_{\omega}(X)$ for all $t$ in $[0,1]$ and 
	\begin{align*}
		  d_k( j^k \Phi_{t}(p) - j^k \varphi_t(p)) &< \epsilon \text{ for all } p \in Z_{a}^\R, 	\\
		d_k( j^k \Phi_{t} (p) - j^k \mathrm{id} (p) )		&< \epsilon  \text{ for all } p \in Z_{b}.	 
	\end{align*}
\end{theorem}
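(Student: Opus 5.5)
We follow the standard Andersén--Lempert scheme, adapted to the real setting as in Deng--Wold \cite{MR4423269} and Huang \cite{CaloCarleman}. Let $H_t$ be the smooth time-dependent Hamiltonian on $X_\R$ generating $\varphi_t$. Since $\varphi_t$ is the identity on $Z_r^\R$, $dH_t$ vanishes on the interior of $Z_r^\R$. After subtracting a time-dependent constant on each component we may therefore take $H_t \equiv 0$ near $Z_b^\R$, because $b<r$.

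The first step is to complexify $H_t$ on the compact set $S = Z_{a'}^\R \cup Z_{b'}$, where $b<b'<r$ and $a'>a$ is chosen so large that the trajectories $\varphi_t(Z_a^\R)$, $t\in[0,1]$, stay inside $Z_{a'}^\R$. Define a smooth function on $S$ by $H_t$ on $Z_{a'}^\R$ and $0$ on $Z_{b'}$. These agree on the overlap $Z_{b'}^\R$, so the function is well defined, and it is holomorphic on the interior of $Z_{b'}$. Since $S$ is holomorphically convex by Lemma \ref{Saturn-holoconv} and $X_\R$ is totally real, Carleman/Whitney-type approximation on such unions (e.g.\ the Manne--Wold--{\O}vrelid results cited in \cite{MR4264040}) gives holomorphic $h_t$ on $X$ approximating this function in $C^{k+1}$ on $Z_{a'}^\R$ and uniformly (hence, by Cauchy estimates, in $C^{k+1}$) on a slightly smaller $Z_{b''}$. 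The approximation depends continuously on $t$: discretize $t$ and interpolate piecewise linearly. Replacing $h_t$ by $(h_t+\overline{\sigma^*h_t})/2$ makes each $h_t$ $\sigma$-compatible, so that $h_t$ is real on $X_\R$. By Lemma \ref{T^*M-SympCompl}-type compatibility ($\sigma^*\omega=\overline\omega$), the Hamiltonian field $V_{h_t}$ is then $\sigma$-compatible, and its flow $\Psi_t$ preserves $X_\R$. Moreover $\Psi_t$ is $C^k$-close to $\varphi_t$ on $Z_a^\R$ and to $\id$ on $Z_b$, by the stability of ODE flows on compacts and the fact that the flow stays in the region where the approximation holds.

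The second step replaces $\Psi_t$ by compositions of flows of complete fields. Partition $[0,1]$ into $N$ small steps and freeze time, so that $\Psi_1$ is approximated by the product of the time-$1/N$ maps of the autonomous fields $V_{h_{j/N}}$. By the $\sigma$-Hamiltonian density property in the form of Remark \ref{replace-Lie-by-Sum}, each $h_{j/N}$ can be approximated on a large compact $\sigma$-invariant set by a finite sum $\sum_l c_l g_l$. Here the $g_l$ are $\sigma$-compatible with complete Hamiltonian fields. The coefficients $c_l$ are a priori complex, but we project to the $\sigma$-compatible part: since $h$ is $\sigma$-compatible, replacing the sum by its average with $\overline{\sigma^*(\cdot)}$ gives $\sum_l (\mathrm{Re}\, c_l)\, g_l$, which is still close to $h$. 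Thus real coefficients suffice. The flow of $\sum_l c_l V_{g_l}$ is then approximated by the Lie--Trotter product of the flows of the complete, $\sigma$-compatible fields $c_lV_{g_l}$, each of which lies in $\saut_\omega(X)$ for real time. Concatenating gives a piecewise-defined holomorphic Hamiltonian isotopy in $\saut_\omega(X)$. Reparametrizing each piece in time (the flows of the $c_lV_{g_l}$ run one after another) yields a genuine isotopy $\Phi_t$, which is close to $\Psi_t$ for all $t$ on the chosen compacts.

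The main obstacle is the first step. We need simultaneous $C^k$ approximation on the totally real part $Z_{a'}^\R$ and holomorphic closeness to $0$ on $Z_{b'}$, with parameters, and the patched function is not holomorphic near the boundary $\partial Z_{b'}\cap X_\R$. I would handle this by first extending $H_t$ to a neighborhood of $X_\R$ as an almost-holomorphic function with $\bar\partial$ vanishing to high order on $X_\R$; this extension is $\equiv 0$ near $Z_{b'}^\R$. Gluing with $0$ near $Z_{b'}$ gives a $\bar\partial$-closed-to-high-order datum on a neighborhood basis of $S$, and Hörmander $L^2$ estimates with the $\sigma$-invariant weight $\rho$ then correct it to a holomorphic function. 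The resulting $C^k$ estimates on $X_\R$ would be obtained via the standard Range--Siu type argument.
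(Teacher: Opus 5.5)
Your proposal is correct and follows essentially the paper's route. Both arguments use holomorphic approximation on the holomorphically convex set $Z_{a'}^\R\cup Z_{b'}$ (Lemma \ref{Saturn-holoconv}), symmetrization with respect to $\sigma$, real-coefficient approximation via the $\sigma$-Hamiltonian density property, and a product formula for flows. The paper handles the $t$-dependence differently: it introduces an auxiliary variable $s$ and Hamiltonians polynomial in $(s,t)$, so that $\Phi_t$ is a single $n$-fold composition that is smooth in $t$, whereas you freeze time and concatenate, which needs a flat reparametrization at the junctions to give a smooth isotopy.

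Your announced main obstacle does not actually arise. Since $b'<r$, the patched function vanishes identically on the open neighborhood $\{\rho<r\}$ of $Z_{b'}$, so the parametric Mergelyan theorem for admissible sets (Theorem 20 of \cite{MR4264040}, which the paper invokes) applies directly. Your almost-holomorphic extension plus H\"ormander argument only reproves that theorem.
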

\begin{proof}
    We introduce an auxiliary time variable $s$
    \[
        \psi \colon [0,1] \times [0,1] \times X_\R \to X_\R, (s, t, p) \mapsto \varphi(st, p)
    \]
    and consider $\varphi_t$ as the time-1 Hamiltonian flow of the time-$s$ dependent Hamiltonian function $h(s,t,p)$ 
    with the initial condition $\psi(0,t,p) = \varphi(0,p) = p$. 
    Since $\varphi_t \rvert_{Z_r^\R} = \id_{Z_r^\R}$ for $t \in [0,1]$, we can assume that $h(s,t,\cdot)$ is $0$ on $Z_r^\R$ and thus we can extend $h_{s,t}$ to be 0 in an open neighborhood of $Z_{b'}$ for $s, t \in [0,1]$, where $r > b' > b$. 
    Choose $a'>0$ such that 
    \begin{align*}
        \bigcup_{ t \in [0,1] }  \varphi_t ( Z_{a}^\R ) \subset Z_{a'}^\R. 
    \end{align*}
    Let $Q$ be a compact Hausdorff space whose points we denote by $q$. Given a continuously $q$-dependent function $h_q$ on $S= Z^\R_{a'} \cup Z_{b'}$ which is in $C^{k+1}(S) \cap \holo(Z_{b'})$, by the holomorphic convexity of $S$ in $X$ (Lemma \ref{Saturn-holoconv}) and the parametric version of the Mergelyan theorem by Forn{\ae}ss--Forstneri{\v c}--Wold \cite{MR4264040}*{Theorem 20}, $h_q$ is approximable in the $C^{k+1}$-norm by a continuously $q$-dependent function $P_q$ which is holomorphic in a neighborhood $\Omega$ of $S$. 
    Hence $h_{s,t}$ can be approximated in the $C^{k+1}$-norm on $S$ by a continuously $(s,t)$-dependent holomorphic function $P_{s,t}$ for $s,t \in [0,1]$.
    Moreover, we can assume that $P(s,t,p)$ is a finite sum of the form $\sum_{n,m} s^n t^m P_{nm}(p)$ by a patching argument as in the proof of \cite{MR1314745}*{Lemma 1.2}. 

    Then by the Cartan--Oka--Weil theorem, we can approximate $P_{nm}$ on a relative compact Runge open neighborhood of $S$ by a holomorphic function $\widetilde{P}_{nm}$ on $X$. Since both $P_{nm}$ and $\widetilde{P}_{nm}$ are  holomorphic in a neighborhood of $S$, the approximation holds in the $C^{k+1}$-norm.

    Since $h_{s,t}$ is a real function on $X_\R$, we can replace $\widetilde{P}_{nm}$ by $(\widetilde{P}_{nm} + \overline{\sigma_* \widetilde{P}_{nm}})/2$ such that 
    \[
        \widetilde{P}_{s,t}(p) = \widetilde{P}(s,t,p) = \sum_{n,m} s^n t^m \widetilde{P}_{nm}(p)
    \]
    is real holomorphic with respect to $\sigma$ and approximates $h_{s,t}$. 

    By the $\sigma$-density property and Remark \ref{replace-Lie-by-Sum}, $\widetilde{P}_{nm}$ can be approximated on $\Omega$ by a complex linear combination $L$ of real holomorphic functions in $IHF_X$. 
    Since real holomorphic functions are real-valued on $X_\R$, we may assume that the coefficients in this linear combination are real. Indeed we can separate the coefficients of $L$ and thus also $L$ into real and imaginary parts $\widetilde{P}_{nm} \approx L_1 + i L_2$. Acting on both sides with $\overline{\sigma_*(\cdot)}$, we get $\widetilde{P}_{nm} \approx L_1 - i L_2$ since $\widetilde{P}_{nm}$ and $L_1, L_2$ are real holomorphic. Thus $L_2 \rvert_{\Omega} \approx 0$ and $L_1$ approximates $\widetilde{P}_{nm}$ on $\Omega$. Hence $\widetilde{P}_{s,t}$ can be approximated on $\Omega$ (thus $h_{s,t}$ on $S$) by a finite sum $\sum_{l=1}^N p_l(s,t) \widetilde{P}_l$ where $p_l(s,t)$ is a polynomial and $\widetilde{P}_l$ is a real holomorphic function in $IHF_X$.

    By definition for all $t \in [0,1]$ the Hamiltonian vector field corresponding to $h_{s,t}$ is integrable for time $s \in [0,1]$ for all starting points $x \in Z_a^\R \cup Z_b$ and its flow is contained in $S = Z^\R_{a'} \cup Z_{b'} \subset \Omega$, thus there is an open neighborhood $U$ containing $Z_a^\R \cup Z_b$ such that the Hamiltonian vector field $\widetilde{W}$ corresponding to $\sum_{l=1}^N p_l(s,t) \widetilde{P}_l$ is integrable for time $s \in [0,1]$ for all initial points $x$ in $U$. 

    For a fixed $n \in \N$, $j \in \{0, 1, \dots, n-1\}$, we denote by $\Phi^{(s, l)}_\theta$ the time-$\theta$ Hamiltonian flow map of the time-$s$ independent function $ p_l(s,t) \widetilde{P}_l$, $l \in \{ 1, \dots, N\}$ and consider the composition
    \begin{align*} 
        \Phi^{(s)}_{\theta} =  \Phi^{(s, 1)}_{\theta} \circ \Phi^{(s, 2)}_{\theta} \circ \cdots \circ \Phi^{(s,N)}_{\theta} .
    \end{align*}
    Now we take 
    \begin{align} \label{composition-ham}
		 \Phi^{(\frac{n-1}{n})}_{1/n} \circ  \Phi^{(\frac{n-2}{n})}_{1/n}  \circ \cdots \circ  \Phi^{(\frac{1}{n})}_{1/n} \circ  \Phi^{(0)}_{1/n} 
	\end{align}
    which approximates the time-$1$ flow of $\widetilde{W}$ uniformly on a relatively compact open neighborhood of any compact subset in $U$ for large enough $n$ \cite{MR0515141}*{Theorem 2.1.26}. Indeed an algorithm for the vector field $\left( 1,  \widetilde{W} \right) $ on $[0,1] \times ([0,1]\times X)$ is given by 
    \begin{align*}
        [0, \epsilon) \times [0,1]^2 \times X \to  [0,1]^2 \times X, \quad (\theta, s, t, p) \mapsto (s + \theta, t, \Phi^{(s)}_\theta(p))  
    \end{align*}
    whose $n$th iterate after projecting to $X$ is \eqref{composition-ham}. 
    By the Cauchy estimate, the approximation is in the $C^k$-norm and \eqref{composition-ham} depends smoothly on $t$. 
    
    Finally, since approximation of $h_{s,t}$ by the function $\sum_{l=1}^N p_l(s,t) \widetilde{P}_l$ implies the approximation of $\varphi_t$ by the time-1 flow of $\widetilde{W}$, it follows that \eqref{composition-ham} approximates $\varphi_t$ on a relatively compact open neighborhood of $Z^\R_a \cap Z_b$. 
\end{proof}

\noindent \textit{Proof of} Theorem \ref{Ham-Carleman}. 
Since $\varphi$ is a Hamiltonian diffeomorphism of $X_\R$, there exists a Hamiltonian isotopy $\varphi_t$ in $\diff_{\omega_\R}(X_\R), t \in [0,1]$ with $\varphi_0 = \mathrm{id}$ and $\varphi_1= \varphi$. 
	Given any natural number $j$ and any positive $\varepsilon_j$, assume that we have a holomorphic symplectomorphism $\Phi_j \in \saut_\omega(X)$, a Hamiltonian isotopy $\psi_{j,t}$ of $X_\R$, real numbers $r_j, s_j$ with $r_j \ge j-1, s_j \ge r_j + 1$ such that
	\smallskip
	\begin{enumerate}
		\item[($1_j$)] 	The image of $ Z_{r_j} $ under $\Phi_j$ is contained in $Z_{s_j}$. 
		\item[($2_j$)]	For $j \ge 2$ 
					\[	 d_k (j^k \Phi_j (p) - j^k \Phi_{j-1}(p) ) < \varepsilon_j \quad  \forall \, p \in Z_{r_{j-1}}.
                    \] 
		\item[($3_j$)]	On $Z^\R_{s_j }$ we have $\psi_{j,t}$ is the identity for $t$ in $[0,1]$. 
		\item[($4_j$)]	For all $p$ in $X_\R$
					\[	d_k (j^k (\psi_{j, 1} \circ \Phi_j) (p) - j^k \varphi (p) ) < \epsilon (p).
                    \]
	\end{enumerate}

    This inductive process builds on the uniform approximation on the compact subsets $S_j$ of the form $Z_a^\R \cup Z_b, a > b$ as in Theorem \ref{theorem: LocalCarleman} and the so-called pushed-out method which enlarges the approximating region at each inductive step. For more details see Huang \cite{CaloCarleman}*{Theorem 6.1}. 
    \smallskip

    The desired $\Phi \in \saut_\omega(X)$ that should approximate $\varphi$ on $X_\R$ is the limit $\lim_{j \to \infty} \Phi_j$. Indeed the holomorphic symplectomorphism $\Phi_{j+1} \circ \Phi_j^{-1}$ approximates the identity on $Z_{r_j}$. Since $r_j \ge j -1$, the sublevel set $Z_{r_j}$ exhausts $X$ in the limit. Additionally we have $r_{j+1} = s_j + 1 > r_j + 2$. Choose $\{ \varepsilon_j \}_{j \in \N} \subset \R_{>0}$ to have finite sum such that
	\[	0 < \varepsilon_{j+1} < \mathrm{dist} ( Z_{r_{j}} , X \setminus Z_{r_{j+1}} ),
	\]
	where $\mathrm{dist}(\cdot, \cdot)$ is the distance function on $\C^{2N}$ restricted to $X$. 
	Therefore the limit 
	\[	\lim_{j \to \infty} \Phi_{j+1} \circ \Phi_j^{-1}
	\]
	exists uniformly on compact subsets on 
	\[	
        \bigcup_{j = 1}^{\infty} \left( \Phi_{j+1} \circ \Phi_j^{-1} \right)^{-1}  (Z_{r_j}) = X
	\]
	(see e.g.\@ Forstneri{\v c} \cite{MR1760722}*{Proposition 5.1}). Thus
	\[	
         \lim_{j \to \infty} \Phi_{j} = (\lim_{j \to \infty} \Phi_{j+1} \circ \Phi_j^{-1} ) \circ \Phi_1
	\]
	converges to a holomorphic symplectomorphism $\Phi$ in $\saut_\omega(X)$. 
	Moreover, $(1_j)$ and $(3_j)$ guarantee that $\psi_{j, 1} \circ \Phi_j$ is $\Phi_j$ on $Z_{r_j}^\R$. Then $(4_j)$ says $\Phi_j$ approximates $\varphi$ on $Z_{r_j}^\R$ and thus in the limit $\Phi$ approximates $\varphi$. 
    \hfill \qed

\begin{theorem} \label{thm: Carleman-Cotan}
    Let $M$ be a Stein manifold and $\tau \colon M \to M$ an antiholomorphic involution such that the fixed-point set $M_\R=\mathrm{Fix}(\tau) \neq \emptyset$. Assume that $M$ is $\tau$-holomorphically flexible. 
    
    Then for every Hamiltonian diffeomorphism $\varphi \in \diff_{\omega_\R}(T^*_\R M_\R)$, for any $k \in \N$ and any positive continuous function $\epsilon$ on $T^*_\R M_\R$, there exists a holomorphic symplectomorphism $\Phi \in \saut_{\omega}(\cotbun)$ such that
	\[	d_k( j^k \Phi (p) - j^k \varphi(p) ) < \epsilon (p) \quad \text{ for all } p \in \rcotbun.	\]
\end{theorem}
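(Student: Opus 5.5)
The plan is to obtain Theorem \ref{thm: Carleman-Cotan} as a direct application of Theorem \ref{Ham-Carleman} with $X = \cotbun$, $\omega$ the canonical symplectic form, $\sigma = \ttau$, and $X_\R = \rcotbun$. Theorem \ref{Ham-Carleman} has three hypotheses, which I will check one at a time.

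First, the symplectic complexification. Since $M_\R = \mathrm{Fix}(\tau) \neq \emptyset$, the discussion before Lemma \ref{T^*M-SympCompl} (Andreotti–Holm coordinates) shows that $M_\R$ is a totally real submanifold of maximal dimension. It also shows that $\rcotbun$ is exactly the fixed-point set of $\ttau$ inside $\cotbun|_{M_\R}$. I will make one extra remark: any fixed point $(x,p)$ of $\ttau$ satisfies $\tau(x)=x$, so $\mathrm{Fix}(\ttau) = \rcotbun$ globally. The inclusion $\jmath$ is then a proper embedding, because $M_\R$ is closed in $M$ and $\rcotbun$ is closed in $\cotbun|_{M_\R}$. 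With this, Lemma \ref{T^*M-SympCompl} shows that $(\cotbun,\omega,\ttau)$ is a symplectic complexification of $(\rcotbun,\omega_\R)$.

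Second, $\cotbun$ is Stein. It is the total space of a holomorphic vector bundle over the Stein manifold $M$. Concretely, $\cotbun$ embeds as a closed submanifold of the trivial bundle $M \times \C^N$: choose finitely many vector fields spanning $TM$ (for instance those of Lemma \ref{tauHolFlex-Properties}(2)) and evaluate covectors on them. A closed submanifold of a Stein manifold is Stein.

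Third, the $\ttau$-Hamiltonian density property follows from Proposition \ref{holo-tauFlex-to-SDP}, since $M$ is $\tau$-holomorphically flexible.

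With all three hypotheses in place, Theorem \ref{Ham-Carleman} applied to the Hamiltonian diffeomorphism $\varphi$, the integer $k$, and the function $\epsilon$ produces the required $\Phi \in \saut_\omega(\cotbun)$.

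The only step that needs any care is the first: making sure the fixed-point set of $\ttau$ on all of $\cotbun$ is exactly $\rcotbun$, with $\jmath^*\omega = \omega_\R$ and $\ttau^*\omega=\overline{\omega}$. This is essentially the content of Lemma \ref{T^*M-SympCompl}, so I expect no real obstacle. The dimension-one case needs no separate treatment here, because Proposition \ref{holo-tauFlex-to-SDP} already covers it.
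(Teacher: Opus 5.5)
Your proposal is correct relative to the paper's stated results and is essentially the paper's own proof. Like the paper, you take the symplectic complexification $(\cotbun,\omega,\ttau)$ from Lemma~\ref{T^*M-SympCompl}, note that $\cotbun$ is Stein with the $\ttau$-Hamiltonian density property by Proposition~\ref{holo-tauFlex-to-SDP}, and apply Theorem~\ref{Ham-Carleman}; your extra checks (global fixed-point set, properness, Steinness via $M\times\C^N$) only fill in details the paper leaves implicit.

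One caution, shared with the paper, concerns your claim that dimension one is covered. That claim rests only on the statement of Proposition~\ref{holo-tauFlex-to-SDP}, whose proof treats only $T^*\C$ in dimension one. Item (3) of the remark after Theorem~\ref{thm: homoSp-Ham} doubts the $\ttau$-Hamiltonian density property for $T^*\C^*$ with $\tau(z)=\overline{z}^{-1}$, which is $\tau$-holomorphically flexible via $iz\,\partial_z$ and has $M_\R=S^1$. So both arguments are secure only for $\dim M\ge 2$ and for the one-dimensional cases treated explicitly.
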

\begin{proof}
    Consider the symplectic complexification $(\cotbun, \omega, \ttau)$ of $(\rcotbun, \omega_\R)$. The holomorphic cotangent bundle $\cotbun$ over a Stein base $M$ is Stein and has the $\tilde{\tau}$-Hamiltonian density property by Proposition \ref{holo-tauFlex-to-SDP}.  Thus Theorem \ref{Ham-Carleman} gives the desired approximation. 
\end{proof}

\section{Real Flexible Examples} \label{sec: realFlex}
In this section we present examples where our approximation results apply. 

\subsection{Affine examples}

\begin{lemma} \label{sl2-triple}
    Let $\mathfrak{g}$ be a Lie algebra and $E,F \in \mathfrak{g}$ two nilpotent elements generating an $\mathfrak{sl}_2$-subalgebra, namely $[[E,F],E]=2E$ and $[[E,F],F]=-2F$. Then $E-F-[E,F]$ is nilpotent. 
\end{lemma}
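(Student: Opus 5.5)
The plan is to exhibit $E-F-[E,F]$ as a conjugate of $E$ by the exponential of the nilpotent element $F$. Since conjugation preserves nilpotency, this settles the claim. Write $H:=[E,F]$, so the hypotheses read $[H,E]=2E$ and $[H,F]=-2F$, i.e.\ $(E,H,F)$ is a standard $\mathfrak{sl}_2$-triple.

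As a sanity check in the defining representation, take
\[
E=\begin{pmatrix}0&1\\0&0\end{pmatrix},\quad F=\begin{pmatrix}0&0\\1&0\end{pmatrix},\quad H=\begin{pmatrix}1&0\\0&-1\end{pmatrix}.
\]
Then
\[
E-F-H=\begin{pmatrix}-1&1\\-1&1\end{pmatrix},
\]
which has trace $0$ and determinant $0$, hence is nilpotent. Moreover it equals $gEg^{-1}$ with $g=\exp F=\begin{pmatrix}1&0\\1&1\end{pmatrix}$. This suggests proving the general identity
\[
e^{\operatorname{ad} F}(E)=E-H-F.
\]

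The identity is a purely Lie-algebraic computation with a terminating series, using only the two bracket relations:
\begin{align*}
[F,E]&=-H,\\
[F,[F,E]]&=-[F,H]=[H,F]=-2F,\\
[F,[F,[F,E]]]&=-2[F,F]=0.
\end{align*}
Hence
\[
e^{\operatorname{ad} F}(E)=E+[F,E]+\tfrac12[F,[F,E]]=E-H-F,
\]
and the exponential is a finite sum, so no convergence issues arise.

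It remains to transfer nilpotency of $E$ to its conjugate, and this depends on what ``nilpotent'' means in $\mathfrak g$.
\begin{itemize}
\item In a linear Lie algebra (or any representation $\rho$ in which $\rho(F)$ is nilpotent), $e^{\rho(F)}$ is a polynomial in $\rho(F)$ and is invertible. Then $\rho(E-F-H)=e^{\rho(F)}\rho(E)e^{-\rho(F)}$, which is nilpotent because $\rho(E)$ is.
\item In the intended application, $\mathfrak g$ is a Lie algebra of algebraic vector fields and nilpotent means locally nilpotent (LND). There the flow $\phi=\exp(F)$ of the LND $F$ is an algebraic automorphism, and $e^{\operatorname{ad} F}E$ is (up to the sign convention) the pushforward $\phi_*E$, since $t\mapsto\phi_{t*}E$ satisfies the same finite Taylor expansion. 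The pushforward of an LND by an automorphism is again an LND, because the corresponding derivation is conjugated by $\phi^*$.
\end{itemize}

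The main point of care is this last step: matching the sign convention of the vector-field bracket with $\operatorname{Ad}$, so that $\phi_*E$ equals $e^{\pm\operatorname{ad}F}E$. If the sign comes out as $E+H-F$ instead, one simply applies the same argument to $e^{-\operatorname{ad} F}$ (or to the flow at time $-1$), which yields the other sign combination. In the abstract or linear setting, the identity above suffices directly.
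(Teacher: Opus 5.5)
Your proof is correct, but it takes a different route from the paper. The paper writes down the $3\times 3$ matrix of $\mathrm{ad}_X$, where $X=E-F-[E,F]$, on $\mathfrak{s}=\langle E,F,[E,F]\rangle$ in the basis $E,F,[E,F]$, and observes that this matrix is nilpotent (its cube vanishes). That check is quick, but it only shows that $X$ is ad-nilpotent inside the three-dimensional algebra $\mathfrak{s}\cong\mathfrak{sl}_2$. The lemma is later used to obtain nilpotent matrices in $\mathfrak{g}_\R$, hence LNDs. To pass from ad-nilpotency in $\mathfrak{s}$ to that conclusion, the paper tacitly needs the fact that nilpotent elements of the semisimple algebra $\mathfrak{sl}_2$ act nilpotently in every finite-dimensional representation, i.e.\ preservation of the Jordan decomposition.

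Your identity $e^{\mathrm{ad}F}E=E-[E,F]-F$ avoids that step. It exhibits $X$ as the image of $E$ under the automorphism $e^{\mathrm{ad}F}$. Nilpotency therefore transfers directly to every representation in which $F$ acts nilpotently, including $\mathrm{ad}$ itself. In the LND setting it transfers via the algebraic time-$\pm 1$ flow of $F$. Your handling of the sign ambiguity is fine, since $e^{-\mathrm{ad}F}E=E+[E,F]-F$ is realized by the flow at the opposite time.

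The trade-off is this. You must invoke $e^{A}Be^{-A}=e^{\mathrm{ad}A}B$ for nilpotent $A$, and in the vector-field case the relation between $e^{\mathrm{ad}F}$ and the pullback by the flow of $F$. In return you get a self-contained argument and a stronger conclusion: $X$ is conjugate to $E$, not merely nilpotent. Moreover, your computation uses only the relation $[[E,F],F]=-2F$, whereas the paper's matrix uses both bracket relations.
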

\begin{proof}
    Let $X = E-F-[E,F]$ and $\mathfrak{s} = \langle E, F, [E,F] \rangle$. The matrix of the endomorphism $ad_X \colon \mathfrak{s} \to \mathfrak{s}, Y \mapsto [X,Y]$ with respect to the basis $\{ E,F,[E,F]\}$ 
    \begin{align*}
        \begin{pmatrix}
            -2 & 0 & -2 \\ 0 & 2 & -2 \\ 1 & 1 & 0
        \end{pmatrix} 
    \end{align*} 
    is nilpotent.
\end{proof}

A real form $G_\R$ of a linear algebraic group $G$ is called \textbf{noncompact} if the semisimple part of the Lie algebra $\mathfrak{g}_\R$ of $G_\R$ does not contain any compact Lie ideal.
\begin{proposition}
    Let $G$ be a linear algebraic group whose identity component has no nontrivial characters and $G_\R$ a noncompact real form of $G$ with the antiregular involution $\tau$. 
    
    (1) Then the group $G$ is $\tau$-flexible. 
    
    (2) Moreover let $H$ be a complex closed subgroup of $G$ such that $G/H$ is affine. Then the homogeneous space $G/H$ is $\tau$-flexible. 
\end{proposition}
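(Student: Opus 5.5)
We need to produce $\tau$-compatible LNDs spanning $T_gG$ at every point. The natural source is one-parameter unipotent subgroups of $G_\R$. If $N \in \mathfrak{g}_\R$ is a nilpotent element, then $t \mapsto \exp(tN)$ is an algebraic $\C^+$-action on $G$. Both left multiplication $g \mapsto \exp(tN) g$ and right multiplication $g \mapsto g \exp(tN)$ are such actions. Their generators are the right-invariant and left-invariant vector fields associated to $N$, and these are LNDs. Because $N$ lies in the real form, we have $\tau(\exp(tN) g) = \exp(\bar t N)\tau(g)$. Hence these LNDs are $\tau$-compatible. So it suffices to show that the nilpotent elements of $\mathfrak{g}_\R$ span $\mathfrak{g}_\R$ over $\R$. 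Indeed, then their $\C$-span is $\mathfrak{g}$, and translating by $g$ (say via left-invariant fields) spans $T_gG$ at every $g$.

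\textbf{Step 1 (spanning by real nilpotents).} Write $\mathfrak{g}_\R = \mathfrak{s}_\R \ltimes \mathfrak{r}_\R$ (Levi decomposition). Since $G^0$ has no nontrivial characters, $G^0$ is generated by unipotent subgroups. Equivalently, there is no torus quotient, so the radical is $\mathfrak{u} \oplus$ (torus part acting nontrivially). I would treat the pieces separately.

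First, the real form $\mathfrak{u}_\R$ of the unipotent radical consists of nilpotent elements. Second, each simple ideal of $\mathfrak{s}_\R$ is noncompact by assumption, hence contains a nonzero real nilpotent element. Indeed, a noncompact simple real Lie algebra has a nontrivial restricted root decomposition, and root vectors are nilpotent. The real span of all nilpotent elements is $\mathrm{Ad}(G_\R)$-invariant, so it is an ideal. Being nonzero in a simple ideal, it therefore fills that ideal.

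It remains to handle a possible real torus part $\mathfrak{t}_\R$ of the radical. Absence of characters forces $\mathfrak{t}$ to act on $\mathfrak{u}$ without trivial quotient, so that $\mathfrak{g} = [\mathfrak{g},\mathfrak{g}]$ modulo the unipotent part. This is where Lemma \ref{sl2-triple} enters. Elements of the form $[E,F]$ (semisimple) are recovered as real combinations of the nilpotents $E$, $F$ and $E-F-[E,F]$. More generally, for $x \in \mathfrak{t}_\R$ and a real weight vector $u \in \mathfrak{u}_\R$, the element $x+u$ is conjugate under $\exp(\mathfrak{u}_\R)$ to $x$ and is not nilpotent. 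Instead one uses that $[\mathfrak{g}_\R,\mathfrak{g}_\R]$ is spanned by brackets. In the semisimple factor, brackets of nilpotent pairs are handled via $\mathfrak{sl}_2$-triples and Lemma \ref{sl2-triple}.

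\textbf{Step 2 (reduction to the ideal).} Having identified the real span $\mathfrak{n}_\R$ of the real nilpotent elements as an $\mathrm{Ad}(G_\R)$-stable ideal containing $\mathfrak{u}_\R$ and $\mathfrak{s}_\R$, I would argue as follows. The quotient $\mathfrak{g}/\mathfrak{n}$ is the Lie algebra of the quotient group $G^0/N$, where $N$ is the subgroup generated by the unipotent subgroups. This quotient is then a torus, which has characters unless it is trivial. The no-characters assumption therefore gives $\mathfrak{n} = \mathfrak{g}$, and since $\mathfrak{n}_\R$ is defined over $\R$, we get $\mathfrak{n}_\R = \mathfrak{g}_\R$. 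I expect this identification to be the main obstacle. Two things need care: verifying that the complex group generated by the $\exp(\C N)$, $N \in \mathfrak{n}_\R$, is normal with torus quotient, and checking that noncompactness really covers every simple factor.

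\textbf{Step 3 (homogeneous space).} For $G/H$ with $H$ $\tau$-stable (implicitly required so that $\tau$ descends), the left actions $gH \mapsto \exp(tN)gH$ descend to $\tau$-compatible LNDs on $G/H$. At $gH$, their values are the image of $\mathfrak{g}$ under the differential of the orbit map at $g$, which is surjective onto $T_{gH}(G/H)$. So the spanning property of part (1) passes to $G/H$, and no right-invariant fields are needed. Affineness of $G/H$ guarantees that these are algebraic LNDs on an affine variety, as required by the definition of $\tau$-flexibility.
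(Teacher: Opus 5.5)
Your proof is correct in substance, and it reaches the key fact (real nilpotent elements span $\mathfrak{g}_\R$) by a shorter route than the paper.

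\textbf{Comparison with the paper.} The paper proceeds in four steps:
\begin{enumerate}
\item It cites that, without characters, the radical equals the unipotent radical.
\item It reduces to semisimple $\mathfrak{g}_\R$.
\item It uses the restricted root decomposition and Lemma~\ref{sl2-triple} to show that the Lie algebra $\mathfrak{l}$ generated by restricted root vectors is all of $\mathfrak{g}_\R$.
\item It reads off nested brackets as coefficients of $\mathrm{Ad}(\exp(t_1X_1))\cdots\mathrm{Ad}(\exp(t_{k-1}X_{k-1}))X_k$, which places them in the real span of nilpotents.
\end{enumerate}
You observe instead that this real span $\mathfrak{n}_\R$ is $\mathrm{Ad}(G_\R)$-stable, hence an ideal. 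It is therefore all of each noncompact simple ideal as soon as it meets that ideal nontrivially. This does everything at once. The paper's polynomial expansion is just the infinitesimal form of your Ad-invariance, and neither $\mathfrak{l}$ nor the $\mathfrak{sl}_2$-lemma is needed. The paper's route is more hands-on with the restricted-root structure; yours needs only one nonzero nilpotent per simple factor. Your Step 3 matches the paper. You correctly use the left $G$-action (right-invariant fields), which is the action that descends to $G/H$.

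\textbf{The torus digression should be deleted.} The paragraph on a ``possible real torus part'' of the radical rests on a false premise. Write $G^0=L\ltimes R_u(G^0)$. If $G^0$ has no nontrivial characters, then $L$ has no central torus, however such a torus might act on $\mathfrak{u}$; otherwise $G^0\to L\to L/[L,L]$ would be a nontrivial character. Hence $\mathfrak{r}=\mathfrak{u}$, and several claims in that paragraph are wrong or moot:
\begin{itemize}
\item the claim that $\mathfrak{t}$ ``acts without trivial quotient'';
\item the remarks on $x+u$ and weight vectors;
\item the appeal to $\mathfrak{sl}_2$-triples.
\end{itemize}
They address a situation that cannot occur.

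\textbf{Consequences for Step 2.} Once $\mathfrak{r}=\mathfrak{u}$, Step 1 alone finishes the proof: $\mathfrak{n}_\R\supseteq\mathfrak{s}_\R\oplus\mathfrak{u}_\R=\mathfrak{g}_\R$. The identification of $\mathfrak{g}/\mathfrak{n}$ with $\mathrm{Lie}(G^0/N)$, which you flagged as the main obstacle, is then unnecessary.

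If you prefer to keep Step 2 as your proof that no torus part exists, note why it is valid. Every nilpotent lies in $\mathrm{Lie}(N)$, and Step 1 gives the reverse inclusion $\mathfrak{s}\oplus\mathfrak{u}\subseteq\mathfrak{n}_\R\otimes\C$, so the two agree. Noncompactness enters exactly here: for a compact form, $\mathfrak{n}_\R=0$ while $N$ can be all of $G^0$.

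Your other concern, that noncompactness covers every simple factor, is settled by the paper's definition of a noncompact real form.
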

\begin{proof}
    (1) Since all components of $G$ are isomorphic as varieties, we may assume $G$ is connected. By \cite{MR2768646}*{Lemma 1.1} the radical of $G$ coincides with the unipotent radical. By the Levi decomposition it suffices to consider the semisimple case.

    Consider the Cartan decomposition of the Lie algebra of $G_\R$, $\mathfrak{g}_\R = \mathfrak{k}_0 \oplus \mathfrak{p}_0$,
    where $\mathfrak{k}_0$ and $\mathfrak{p}_0$ are the $(+1)$-eigenspace and $(-1)$-eigenspace of the Cartan involution $\theta$.
    Let $\mathfrak{a}_0$ be a maximal abelian subalgebra of $\mathfrak{p}_0$ and $Z_{\mathfrak{k}_0}(\mathfrak{a}_0)$ the centralizer of $\mathfrak{a}_0$ in $\mathfrak{k}_0$. Since $G_\R$ is noncompact, the maximal split subtorus $\mathfrak{a}_0 \neq \{0\}$. 
    For $\lambda \in \mathfrak{a}_0^*$, let
    \begin{align*}
        \mathfrak{g}_\lambda = \{\, X \in \mathfrak{g}_\R : (\mathrm{ad}h)X = \lambda(h) X \text{ for all } h \in \mathfrak{a}_0 \,\}.
    \end{align*}
    If $\mathfrak{g}_\lambda \neq \{0\}$ and $\lambda \neq 0$, we say that $\lambda$ is a restricted root of $\mathfrak{g}_\R$ and denote by $\Sigma$ the set of restricted roots. 
    The restricted root space decomposition for $\mathfrak{g}_\R$ is given by (see Anthony Knapp \cite{MR1920389}*{Proposition 6.40})
    \begin{align*}
        \mathfrak{g}_\R = \mathfrak{g}_0 \oplus \bigoplus_{\lambda \in \Sigma} \mathfrak{g}_\lambda \text{ with } \mathfrak{g}_0 = \mathfrak{a}_0 \oplus Z_{\mathfrak{k}_0}(\mathfrak{a}_0).
    \end{align*}
    The nonzero restricted root spaces $\mathfrak{g}_\lambda$, $\lambda \in \Sigma$ are nilpotent. 

    Denote by $\mathfrak{l} = \langle \mathfrak{g}_\lambda, \lambda \in \Sigma \rangle$ the Lie algebra generated by the nonzero restricted root spaces. Then $\mathfrak{l}$ is a Lie ideal of $\mathfrak{g}_\R$ by $[\mathfrak{g}_0, \mathfrak{g}_\lambda] \subset \mathfrak{g}_\lambda$.
    For each $X \in \mathfrak{g}_\lambda \setminus \{0\}$, we have $\theta(X) \in \mathfrak{g}_{-\lambda}$ and $[X, \theta(X)] \in \mathfrak{a}_0$. Moreover, the triple $(X, \theta(X), [X, \theta(X)])$ form an $\mathfrak{sl}_2$-triple. By Lemma \ref{sl2-triple}, $\mathfrak{a}_0 \subset \mathfrak{l}$. Since the maximal abelian subalgebra $\mathfrak{a}_0 \subset \mathfrak{p}_0$ intersects every noncompact simple Lie ideal of $\mathfrak{g}_\R$ nontrivially, it follows that $\mathfrak{l}$ is the sum of all noncompact simple Lie ideals of $\mathfrak{g}_\R$, i.e.\@ $\mathfrak{l}=\mathfrak{g}_\R$. Hence every vector in $\mathfrak{g}_0 = \mathfrak{a}_0 \oplus Z_{\mathfrak{k}_0}(\mathfrak{a}_0)$ can be written as a Lie combination of nonzero restricted root vectors. 

    Let $h \in Z_{\mathfrak{k}_0}(\mathfrak{a}_0)$. Our aim is find a nilpotent vector in $\mathfrak{g}_\R$ whose projection to $\R h \subset Z_{\mathfrak{k}_0}(\mathfrak{a}_0)$ is nonzero. For this purpose, it suffices to consider the case when $h$ is a nested Lie bracket of nilpotent vectors $X_1, X_2, \dots, X_k$. Indeed, the nilpotent vector 
    \begin{align*}
        \mathrm{Ad} (\exp (t_1 X_1)) \mathrm{Ad} (\exp (t_2 X_2)) \cdots \mathrm{Ad} (\exp (t_{k-1} X_{k-1})) X_k, \quad t_i \in  \Q_{>0},
    \end{align*}
    has $h$ as the coefficient for $t_1 t_2 \cdots t_{k-1}$ in the expansion. 

    Therefore, we can find a real basis of $\mathfrak{g}_\R$ consisting of nilpotent vectors. The left-invariant LNDs induced by their complexifications in $\mathfrak{g}$
    span the tangent space at every point of the homogeneous space $G$.  
    Thus the complexification $G$ is $\tau$-flexible. 

    \smallskip
    (2) The fixed-point set $H_\R = H^\tau$ is a real form of $H$ and $G_\R / H_\R \subset (G/H)^\tau$. The same LNDs on $G$ induce $\tau$-compatible LNDs on $G/H$ and thus it is $\tau$-flexible, cf.\@ the proof in the more general setting of Proposition \ref{transitivity}. 
\end{proof}

\subsection{Stein examples}

We remind the reader about the universal complexification of a real Lie group $G_\R$, see Gerhard Hochschild \cite{MR0206141}. A complex Lie group $G$ is called the {\bf universal complexification} of $G_\R$, if there exists a continuous (and thus automatically real analytic) group homomorphism $i \colon G_\R \to G$, so that the following universal property holds:

\smallskip
For any real analytic group homomorphism $\varphi \colon G_\R \to H$ in a complex Lie group $H$ there exists a unique holomorphic group homomorphism
$\varphi^\C \colon G \to H$ such that the following diagram commutes:
    \[
    \begin{tikzcd}
    {} & G \arrow[dr, dashed]{}{\exists ! \, \varphi^\C} \\
    G_\R \arrow{ur}{i} \arrow{rr}{\varphi} && H
    \end{tikzcd}
    \]
It was proven by Hochschild that the universal complexification always exists and that there is an antiholomorphic involution $\tau_G \colon G \to G$
(which is a group homomorphism) whose fixed-point set contains the image
$i(G_\R)$. The fixed-point set of the involution $\tau_G$ is in general bigger than $G_\R$; an example is the inclusion of the group of real matrices with positive determinant $GL_n^+ (\R)$ into $GL_n (\C)$. The involution $\tau_G$ is complex conjugation and its fixed-point set is
$GL_n (\R)$.

In general $i$ is not injective, e.g.\@ the universal covering group $\widetilde{ SL_2 (\R)}$ of $SL_2(\R)$ has the universal complexification $SL_2 (\C)$ where $i = {\rm incl} \circ \pi$ is the composition of the universal covering map  $\pi \colon \widetilde{ SL_2 (\R)} \to  SL_2 (\R)$ and the inclusion of $SL_2 (\R)$ into $SL_2 (\C)$. There are also examples where the kernel of $i$ has positive dimension. 

We now give a big class of  examples where our main complexification result, Theorem \ref{Ham-Carleman}, applies.

\begin{lemma} \label{lem: LAdecomposition}
    Let $G_\R$ be a real Lie group and $G$ its universal complexification. Assume that $i \colon G_\R \to G$ is injective and $G$ is linear algebraic. Then there exists a basis $v_1, v_2, \dots, v_n$ of the real Lie algebra $\mathfrak{g}_\R$ of $G_\R$ such that $v_i$ is either nilpotent or the 1-parameter subgroup $\{\, \exp(t v_i): t \in \R \,\}$ is isomorphic to $S^1$ or to $\R_{>0}$.
\end{lemma}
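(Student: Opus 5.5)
Since $i$ is injective and $G$ is linear algebraic, I will identify $\mathfrak{g}_\R$ with a real form of $\mathfrak{g}$ sitting inside the Lie algebra of a linear algebraic group. The plan is to use the structure theory of $G$ to split $\mathfrak{g}_\R$ into pieces, and to find in each piece a basis of vectors of the three required types.

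First, I use the Levi--Mal'cev decomposition of $\mathfrak{g}_\R$, compatible with the algebraic structure: $\mathfrak{g}_\R = \mathfrak{s}_\R \ltimes \mathfrak{r}_\R$, with $\mathfrak{s}_\R$ semisimple and $\mathfrak{r}_\R$ the radical. Since $G$ is algebraic, the radical corresponds to a connected solvable algebraic group. This group is a semidirect product $T \ltimes U$ of a torus and the unipotent radical, and I arrange the decomposition to be compatible with $\tau_G$, using that the universal complexification involution is algebraic. This gives $\mathfrak{r}_\R = \mathfrak{t}_\R \oplus \mathfrak{u}_\R$. Every element of $\mathfrak{u}_\R$ is nilpotent, so any basis of $\mathfrak{u}_\R$ works.

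For the real torus part $\mathfrak{t}_\R$: a real algebraic torus is split as $\mathfrak{t}_\R = \mathfrak{t}_c \oplus \mathfrak{t}_s$ into its anisotropic (compact) and split parts. I choose a basis of $\mathfrak{t}_s$ from the $\mathbb{Z}$-lattice of real cocharacters; for these, $\exp(tv)$ is a copy of $\R_{>0}$. I choose a basis of $\mathfrak{t}_c$ from the kernel lattice of $\exp$ on the compact torus; for these, $\exp(tv)$ closes up to $S^1$. Injectivity of $i$ is used here: it guarantees that the one-parameter subgroups in $G_\R$ are really $S^1$ (resp.\ $\R_{>0}$) and are not covered by something larger.

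For the semisimple part, I use the Cartan decomposition $\mathfrak{s}_\R = \mathfrak{k}_0 \oplus \mathfrak{p}_0$. The argument in the proof of part (1) of the preceding proposition applies to the noncompact ideals of $\mathfrak{s}_\R$ and produces a basis of nilpotent vectors there, using restricted root vectors together with the $\mathrm{Ad}\exp$ trick. For the compact simple ideals, I take a basis of rational elements of a maximal torus and its conjugates: such an ideal is the union of the conjugates of a maximal torus, so elements generating circle subgroups are dense, and I can pick a basis of them. Again, algebraicity and injectivity make these one-parameter subgroups closed circles.

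The main obstacle is to ensure that the pieces fit together and lie inside $\mathfrak{g}_\R$. I need the Levi and torus decompositions to be defined over $\R$, and the closedness and periodicity of $\exp(tv)$ to transfer from $G$ to $G_\R$, given that the fixed-point set of $\tau_G$ may be larger than $i(G_\R)$. I would handle this by working with the real algebraic group $G^{\tau_G}$, whose identity component contains $i(G_\R)_0$, and then applying Chevalley's real structure theory.
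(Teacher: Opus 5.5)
Your argument is correct, but it organizes the reductive part differently from the paper. The paper splits off only the unipotent radical. It then applies the Iwasawa decomposition $\mathfrak{k}_0 \oplus \mathfrak{a}_0 \oplus \mathfrak{n}_0$ to the whole reductive Levi factor at once:
\begin{itemize}
\item vectors in $\mathfrak{n}_0$ are nilpotent;
\item vectors in $\mathfrak{a}_0$ generate copies of $\R_{>0}$;
\item a basis of $\mathfrak{k}_0$ is perturbed inside the tori $\overline{L_v}$ to circle generators.
\end{itemize}
The compact directions inside noncompact simple factors, and the compact and split parts of the center, are thereby absorbed uniformly into $\mathfrak{k}_0$ and $\mathfrak{a}_0$. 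No case distinction between ideals is needed.

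You instead decompose $\mathfrak{s}_\R$ into simple ideals and the radical into torus and unipotent parts. You import the nilpotent-basis statement from part (1) of the preceding proposition for the noncompact ideals. You use density of circle generators only on the compact ideals, and you treat the torus of the radical via its anisotropic/split splitting.

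Your route gives a stronger conclusion: non-nilpotent basis vectors are needed only for compact ideals and the torus of the radical. That matters if one wants LNDs rather than merely complete fields. The cost is reliance on the heavier restricted-root argument. For a single noncompact simple ideal it actually suffices to note that its nilpotent elements span a nonzero $\mathrm{Ad}$-invariant subspace, which is therefore an ideal, hence the whole ideal. The paper's route needs nothing beyond the Iwasawa decomposition.

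The identification you flag as the main obstacle holds with equality, not just containment. Since $\tau_G$ fixes $i(G_\R)$ pointwise and $\mathrm{Lie}(G)$ is the complex span of $di(\mathfrak{g}_\R)$, the subspace $di(\mathfrak{g}_\R)$ is exactly the $(+1)$-eigenspace of $d\tau_G$. Hence $i(G_\R)_0 = (G^{\tau_G})_0$. The paper obtains this from Hochschild's Theorem 3.
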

\begin{proof}
    Since $i \colon G_\R \to G$ is injective and $G$ is a complex linear algebraic group, the fixed-point set of the antiholomorphic involution is a real linear algebraic group $G(\R)$. By Hochschild \cite{MR0206141}*{Theorem 3}, $G_\R \cong i(G_\R)$ is a union of connected components (including the identity component) of $G(\R)$, hence an open Lie subgroup. 

    By the Levi decomposition, $\mathfrak{g}_\R$ is the direct sum of its unipotent radical and a reductive Lie subalgebra. Since this radical consists of nilpotent elements, we may assume $\mathfrak{g}_\R$ is reductive.   

    Consider the Iwasawa decomposition $\mathfrak{g}_\R = \mathfrak{k}_0 \oplus \mathfrak{a}_0 \oplus \mathfrak{n}_0$, where $\mathfrak{k}_0$ is the Lie algebra of the maximal compact subgroup of $G_\R$, $\mathfrak{a}_0$ is a maximal abelian subalgebra of the noncompact part of the Cartan decomposition, and $\mathfrak{n}_0$ is nilpotent. 
        
    In the compact part $\mathfrak{k}_0$, let $v$ be a basis vector of $\mathfrak{k}_0$. The closure of the 1-parameter subgroup $L_v$ generated by $v$ is a connected abelian subgroup of the Lie group of $\mathfrak{k}_0$, $\overline{L_v} = (S^1)^m$, since every compact connected abelian Lie group is a torus, see Knapp \cite{MR1920389}*{Corollary 1.103}. We may vary $v$  slightly to $w$ so that $\overline{L_{w}} \cong S^1 \subset (S^1)^m$ and $w$ remains as a basis vector of $\mathfrak{k}_0$.

    The noncompact abelian part $\mathfrak{a}$ is a split torus over $\R$ and has a basis whose elements integrate to $\R_{>0}$. 
    The nilradical $\mathfrak{n}$ of $\mathfrak{g}_\R$ consists of nilpotent elements.  
\end{proof}

\begin{proposition} \label{transitivity}
    Let $G_\R$ be a real Lie group, $H_\R$ a closed subgroup and $G$, $H$  the universal complexification of $G_\R$, $H_\R$ respectively. Assume that $i \colon G_\R \to G$ is injective, $G$ linear algebraic and $G/H$ affine of complex dimension $\ge 2$. Denote by $\tau \colon G/H \to G/H$ the antiholomorphic involution induced by $\tau_G$ on $G/H$. 
    
    Then $G/H$ is $\tau$-holomorphically flexible. 
   Thus the holomorphic cotangent bundle $T^* (G/H)$ has the $\tilde{\tau}$-Hamiltonian density property and the $\tilde{\tau}$-symplectic density property.
\end{proposition}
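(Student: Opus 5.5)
We will show that $G/H$ is $\tau$-holomorphically flexible. The second assertion then follows at once from Proposition \ref{holo-tauFlex-to-SDP}, since $G/H$ is affine and hence Stein.

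\emph{Step 1: real basis of complete vector fields on $G$.} By Lemma \ref{lem: LAdecomposition} we choose a basis $v_1, \dots, v_n$ of $\mathfrak{g}_\R$ such that each $v_i$ is nilpotent, or generates a one-parameter subgroup isomorphic to $S^1$, or generates one isomorphic to $\R_{>0}$. Since $i$ is injective, we regard $\mathfrak{g}_\R \subset \mathfrak{g}$ as the fixed Lie algebra of $d\tau_G$, and then $v_1, \dots, v_n$ is also a complex basis of $\mathfrak{g} = \mathfrak{g}_\R \otimes \C$. For each $v_i$ the complex one-parameter subgroup $t \mapsto \exp(t v_i)$, $t \in \C$, is defined for all complex times. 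Its action on $G$ by left multiplication, $g \mapsto \exp(t v_i) g$, is therefore the flow of a complete holomorphic vector field $V_i$ on $G$, namely the right-invariant field generated by $v_i$. Because $\tau_G$ is a group homomorphism with $d\tau_G(v_i) = v_i$, we have $\tau_G(\exp(t v_i) g) = \exp(\bar t v_i)\tau_G(g)$. Hence $(\tau_G)_* V_i = \overline{V_i}$, so each $V_i$ is $\tau_G$-compatible. Since the $v_i$ span $\mathfrak{g}$, the fields $V_i$ span $T_gG$ at every point. For this step we only need completeness, so the trichotomy of Lemma \ref{lem: LAdecomposition} is not strictly needed: any element of $\mathfrak{g}$ gives a complete field on the complex Lie group $G$. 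It is nonetheless consistent with the algebraic picture.

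\emph{Step 2: descend to $G/H$.} Left multiplication by $G$ commutes with the right $H$-action, so the flows of the $V_i$ descend to flows $\exp(t v_i) \cdot gH$ on $G/H$. This gives complete holomorphic vector fields $W_i$ on $G/H$, the images of $V_i$ under $d\pi$, where $\pi \colon G \to G/H$ is the quotient map. Since $d\pi_g$ is surjective and the $V_i$ span $T_gG$, the $W_i$ span $T_{gH}(G/H)$ at every point.

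\emph{Step 3: compatibility on the quotient.} This is the main point to check. We need $\tau_G(H) = H$, so that $\tau(gH) := \tau_G(g)H$ is a well-defined antiholomorphic involution; this is the $\tau$ referred to in the statement. The invariance follows from the universal property. Hochschild's construction gives $\tau_H$ on $H$, and the map $H \to G$ induced from $H_\R \hookrightarrow G_\R \to G$ intertwines $\tau_H$ with $\tau_G$, because both are the unique antiholomorphic extensions of the identity on the real points. If $H \to G$ is not injective we replace $H$ by its image, which is closed since $G/H$ is assumed affine. With $\tau_G(H) = H$ in hand, we have
\[
\tau(\exp(t v_i) gH) = \exp(\bar t v_i)\tau_G(g)H,
\]
so $\tau_* W_i = \overline{W_i}$.

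Steps 1 to 3 show that the $\tau$-compatible complete fields $W_i$ span $T(G/H)$ everywhere, which is the definition of $\tau$-holomorphic flexibility. Since $\dim_\C G/H \ge 2$, Proposition \ref{holo-tauFlex-to-SDP} applies to the Stein manifold $G/H$ and yields the $\tilde\tau$-Hamiltonian and $\tilde\tau$-symplectic density properties of $T^*(G/H)$.
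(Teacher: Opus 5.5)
Your proposal is correct and follows the paper's argument. Both take a real basis $v_1,\dots,v_n$ of $\mathfrak{g}_\R$, use the complete fields on $G/H$ induced by left multiplication by $\exp(tv_i)$, get $\tau$-compatibility from $\tau(\exp(tv)\cdot x)=\tau_G(\exp(tv))\cdot\tau(x)$, and conclude with Proposition \ref{holo-tauFlex-to-SDP}. Your extra care only tightens the paper's write-up: you use all the $v_i$ rather than a subset that spans only at $eH$, you justify $\tau_G(H)=H$ by uniqueness in the universal property, and you note that the trichotomy of Lemma \ref{lem: LAdecomposition} is not needed for completeness.
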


\begin{proof}
    By the universality $H$ is a closed subgroup of $G$ such that $\tau_H = \tau_G \rvert_H$. The complex-affine homogeneous space $G/H$ is a complexification of the real homogeneous space $G_\R/H_\R \subset \mathrm{Fix}(\tau)$. The algebraic action of $G$ on $G$ from the left commutes with the right action of $H$ and induces left-invariant vector fields on $G/H$. For an element $v$ in the Lie algebra of $G$, 
    \begin{align*}
        \tau (\exp (tv) \cdot x ) = \tau_G (\exp (t v) ) \cdot \tau(x) \quad \text{ for } x \in G/H,
    \end{align*}
    then vector fields on $G/H$ induced by the action of $G_\R \subset G$ are $\tau$-compatible. 
    By Lemma \ref{lem: LAdecomposition}, there exists a basis $v_1, v_2, \dots, v_n$ of the real Lie algebra $\mathfrak{g}_\R$ of $G_\R$ such that $v_i$ is either nilpotent or the 1-parameter subgroup $\{\, \exp(t v_i): t \in \R \,\}$ is isomorphic to $S^1$ or to
    $\R_{>0}$. Since $\tau_H = \tau_G \rvert_H$ there exist a subset of this basis whose images under $T_e G_\R \to T_{e H_\R} (G_\R / H_\R)$ form a basis $w_1, \dots, w_{m}$ of $T_{e H_\R} (G_\R / H_\R)$. 
    Then their corresponding complex left-invariant vector fields on $G/H$ span the tangent space at every point of $G/H$.
\end{proof}

\begin{theorem} \label{thm: homoSp-Ham}
Let $G_\R$ be a real Lie group, $H_\R$ a closed subgroup and $G$, $H$  the universal complexification of $G_\R$, $H_\R$ respectively. Assume that $i \colon G_\R \to G$ is injective, $G$ linear algebraic and $G/H$ affine of complex dimension $\ge 2$. Denote by $\tau \colon G/H \to G/H$ the antiholomorphic involution induced by $\tau_G$ on $G/H$. 

Then for every Hamiltonian diffeomorphism $\varphi \in \diff_{\omega_\R}(T^*_\R (G_\R/H_\R))$, for any $k \in \N$ and any positive continuous function $\epsilon$ on $T_\R ^*(G_\R/H_\R)$, there exists a holomorphic symplectomorphism $\Phi \in \aut^{\tilde{\tau}}_{\omega}(T^* (G/H))$ such that
	\[	d_k( j^k \Phi (p) - j^k \varphi(p) ) < \epsilon (p) \quad \text{ for all }\, p \in T_\R^* (G_\R/H_\R).	\]
\end{theorem}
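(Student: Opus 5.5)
The plan is to deduce Theorem \ref{thm: homoSp-Ham} from the general complexification result, Theorem \ref{Ham-Carleman}, applied to $X = T^*(G/H)$ with the involution $\sigma = \ttau$ induced by $\tau$. Concretely, I would check the hypotheses of Theorem \ref{thm: Carleman-Cotan} for $M = G/H$. First, $M$ is Stein: $G/H$ is affine by assumption, hence Stein. The holomorphic cotangent bundle of a Stein manifold is Stein, being a holomorphic vector bundle over a Stein base.

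Second, $M$ is $\tau$-holomorphically flexible, and consequently $T^*M$ has the $\ttau$-Hamiltonian density property. Both statements are given by Proposition \ref{transitivity}, whose proof produces $\tau$-compatible complete vector fields spanning $T(G/H)$ everywhere. These vector fields come from the real one-parameter subgroups supplied by Lemma \ref{lem: LAdecomposition}, acting on the left. The density property then follows from Proposition \ref{holo-tauFlex-to-SDP}.

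Third, I need $M_\R = \mathrm{Fix}(\tau) \neq \emptyset$, and I need to identify the real manifold in the statement correctly. The coset $eH$ is fixed by $\tau$, since $\tau_G(e) = e$ and $\tau_H = \tau_G\rvert_H$, so $M_\R$ is nonempty. The delicate point is that the statement concerns $T^*_\R(G_\R/H_\R)$, while Theorem \ref{thm: Carleman-Cotan} concerns $T^*_\R M_\R$ with $M_\R = \mathrm{Fix}(\tau)$, which may be strictly larger. For example, it can be a union of several $\taut(M)$-orbits, as in the phenomenon $GL_n^+(\R) \subset GL_n(\R)$.

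The approach is to observe that $G_\R/H_\R$ embeds in $\mathrm{Fix}(\tau)$ as an open subset. Its dimension equals $\dim_\C G/H$, because the images $w_1, \dots, w_m$ span $T_{eH}$ over $\R$ as in the proof of Proposition \ref{transitivity}. It is also closed, being an orbit of the $G_\R$-action; by Lemma \ref{tauHolFlex-Properties}(1) such orbits are open in $\mathrm{Fix}(\tau)$, so each is also closed. Hence $G_\R/H_\R$ is a union of connected components of $M_\R$.

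Given a Hamiltonian diffeomorphism $\varphi$ of $T^*_\R(G_\R/H_\R)$, generated by a Hamiltonian isotopy with Hamiltonians $h_t$, I would extend it to all of $T^*_\R M_\R$. To do this, I extend $h_t$ by $0$ on the remaining components, and I extend $\epsilon$ by any positive continuous function there. Applying Theorem \ref{thm: Carleman-Cotan} to the extended isotopy then yields $\Phi \in \aut^{\ttau}_\omega(T^*(G/H))$. This $\Phi$ approximates $\varphi$ on $T^*_\R(G_\R/H_\R)$ in the required Whitney $C^k$ sense.

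I expect the main obstacle to be this component bookkeeping. One must verify that $G_\R/H_\R \to \mathrm{Fix}(\tau)$ is a proper embedding onto a union of components, using injectivity of $i$ and Hochschild's description of $G_\R$ as an open subgroup of $G(\R)$. One must also check that the extension of $\varphi$ by the identity is still Hamiltonian on the disconnected manifold, which it is since the Hamiltonians are allowed to vanish there. Everything else follows directly from the results already stated.
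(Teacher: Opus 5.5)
Your proposal is correct and follows the paper's proof. Proposition \ref{transitivity} supplies $\tau$-holomorphic flexibility and the $\ttau$-Hamiltonian density property, Theorem \ref{thm: Carleman-Cotan} finishes when $\mathrm{Fix}(\tau)=G_\R/H_\R$, and otherwise $\varphi$ is extended by the identity over the other components of $\mathrm{Fix}(\tau)$, exactly as in the remark after the theorem; your open-and-closed orbit argument fills in detail the paper omits, though strictly it is the inverse-function-theorem step from the proof of Lemma \ref{tauHolFlex-Properties}(1) applied to the fields from $\mathfrak{g}_\R$, not that lemma's statement about $\taut(M)$.

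The step you defer, that $G_\R/H_\R\to G/H$ is an embedding (i.e.\ $G_\R\cap H=H_\R$), is also taken for granted by the paper and does not follow from injectivity of $i$ and Hochschild's theorem alone: for $H_\R=SO(2,1)^0\subset SL_3(\R)$ one gets $G_\R\cap H=SO(2,1)$, so it should be read as an implicit hypothesis of the statement.
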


\begin{proof} When the fixed-point set of the antiholomorphic involution $\tau$ is equal to $G_\R/H_\R$, apply Theorem \ref{thm: Carleman-Cotan}. If $G_\R/H_\R$ is a proper subset of the fixed-point set, see the remark below.
\end{proof}

\begin{remark}
    (1) We may weaken Condition (ii) $X_\R = \mathrm{Fix}(\sigma)$ of Definition \ref{definition: realform} as follows: $X_\R$ is the union of some connected components of the fixed-point set $\mathrm{Fix}(\sigma)$. This is simply because we can extend a Hamiltonian diffeomorphism of $X_\R$ to $\mathrm{Fix}(\sigma)$ by taking $\id$ on $\mathrm{Fix}(\sigma) \setminus X_\R$. 
    
    (2) It is known that if the subgroup $H$ is reductive then the space $G/H$ is always affine,
    but there is no known group-theoretic characterization for $G$ and $H$ which would say when $G/H$ is affine.

    (3) In  the case that $G/H$ is 1-dimensional, it can be only biholomorphic to $\C^*$ or to $\C$. Since $T^* \C$ and $T^* \C^*$ have the $\tilde{\tau}$-symplectic (Hamiltonian) density property, by Andrist--Huang \cite{MR4986786}*{Lemma 3.1} and \cite{MR4986786}*{Lemma 3.2, Theorem 3.5} respectively, for $\tau$ the complex conjugate, the above holomorphic approximation for Hamiltonian diffeomorphisms on them also holds. However, for $T^* \C^*$ with the involution $\tau(z) = \overline{z}^{-1}$ on $\C^*$, it seems that both the $\tilde{\tau}$-Hamiltonian density property and approximation for Hamiltonian diffeomorphisms do not hold, however we are not able to disprove them. 
\end{remark}

\begin{remark}
    In fact the algebraic cotangent bundle of smooth affine homogeneous spaces $G/H$, with $G$ linear algebraic and $H \subset G$ closed, is symplectomorphic to a closed coadjoint orbit of the semidirect product $G \ltimes W$ where $W \subset \C[G/H]$ is a finite dimensional representation of $G$ generating $\C[G/H]$ as $\C$-algebra \cite{MR1619845}*{Proposition 5.1}. Thus the Hamiltonian density property of $T^*(G/H)$ and the holomorphic approximation of Hamiltonian diffeomorphisms on real forms of $G/H$ were also considered in \cite{MR4423269} from the viewpoint of closed coadjoint orbits of complex Lie groups.
\end{remark}

\subsection{Inhomogeneous examples}

Given a polynomial $p\colon\mathbb{C}\to \mathbb{C}$ with simple zeros, we consider the variety
\[ 
    D_p =\{ (x,y,z)\in \mathbb{C}^3: xy=p(z)\} 
\]
called \textbf{Danielewski surface}. This is an algebraic manifold, since $p$ has only simple zeros. It is known that $D_p$ has the algebraic density property see \cite{MR2350038}*{Theorem 1} and also \cite{AFHKS25}*{\S 4.1}. In particular the following three complete algebraic vector fields
\begin{align*}
    V_1 = x \frac{\partial}{\partial x} - y \frac{\partial}{\partial y}, \quad
    V_2 = p'(z) \frac{\partial}{\partial x} + y \frac{\partial}{\partial z}, \quad
    V_3 = p'(z) \frac{\partial}{\partial y} + x \frac{\partial}{\partial z},
\end{align*}
span the tangent bundle of $D_p$ \cite{AFHKS25}*{Lemma 4.2}, thus $D_p$ is holomorphically flexible. 

Restricting to the case of $p$ with real coefficients and using the complex conjugation $\tau$ as the antiholomorphic involution, we see that $D_p$ is $\tau$-holomorphically flexible as the above spanning vector fields are $\tau$-compatible. 

Moreover by \cite{MR2823038}*{Proposition 2.3} and \cite{MR4883703}*{Proposition 3.1} the smooth Danielewski surface is flexible. Actually the hyperbolic vector field $V_1$ is a Lie combination of $\tau$-compatible LNDs \cite{MR2823038}*{\S 3}. Using the fact that Lie bracket of two complete vector fields can be approximated by a real linear combination of complete vector fields, we have that $V_1$ is a complex linear combination of $\tau$-compatible LNDs, hence the $\tau$-flexibility of $D_p$. 

\smallskip
In fact the polynomial $p$ can be replaced by a holomorphic function of one complex variable with simple zeros. Then the corresponding facts hold true in the holomorphic category.
Only a few Danielewski surfaces are homogeneous with respect to an action of a complex Lie group.  If $p \in \holo(\C)$ has exactly one zero, then $D_p$ is isomorphic to $\C^2$. If $p \in \holo(\C)$ has exactly two zeros, then $D_p$ is biholomorphic to the affine quadric $SL_2 (\C)/T$, where $T$ is the maximal torus.    If, however, $p$ has more than two zeros, then $D_p$ cannot be a homogeneous space of a complex Lie group. Indeed, $D_p$ has trivial fundamental group and strongly deformation-retracts onto a chain of at least two spheres, see Lind \cite[Section 3.3]{Lind2006}, so it is not on the list of complex homogeneous surfaces in \cite{MR0867961} given by Huckleberry.

\smallskip
Also the real locus, i.e.\@ the fixed-point set is not homogeneous under any real Lie group for $n\ge 3$. Indeed the 
retraction given in Lind's PhD thesis is leaving invariant the real locus and retracts it to a chain of $n-1$ one-dimensional spheres. However by Mostow \cite{MR0048464} the only surfaces on which a Lie group acts transitively  are 
 plane, sphere, cylinder, torus, projective plane, Möbius strip or Klein bottle.

\end{document}